
\documentclass[11pt,leqno]{amsart}
\usepackage[utf8]{inputenc}
\usepackage{bbm}
\usepackage{eulervm}

\usepackage[T1]{fontenc}
\usepackage[a4paper,asymmetric]{geometry}
\usepackage{latexsym,amssymb}
\usepackage{hyperref}
\usepackage{amsmath,amsthm}
\usepackage{amsfonts}
\usepackage[american]{babel}
\usepackage{mathrsfs}
\usepackage{graphicx}
\usepackage{xcolor}
\usepackage{tikz}
\usetikzlibrary{patterns}
\addtolength{\textwidth}{+2cm}
\addtolength{\hoffset}{-0.0cm}
\addtolength{\textheight}{+4cm}
\addtolength{\voffset}{-2.0cm}

\newcommand{\R}{\mathbb{R}}
\newcommand{\rn}{\mathbb{R}^N}
\newcommand{\N}{\mathbb{N}}
\newcommand{\AAA}{{\mathscr{A}}}
\newcommand{\Aa}{{\mathscr{A}^a}}
\newcommand{\Ab}{{\mathscr{A}_b}}
\newcommand{\Aab}{{\mathscr{A}^a_b}}

\newcommand{\bd}{\partial}

\newcommand{\C}{\mathscr{C}_{a,b}}
\newcommand{\CC}{\mathscr{K}_{a,b}}

\newcommand{\ee}{\textsf{e}}
\newcommand{\ep}{{\varepsilon}}

\newcommand{\inr}[1]{\mathscr{I}_{#1}}
\newcommand{\inte}{\textrm{int}}

\newcommand{\jla}[1]{{\textrm{J}}_{#1}}

\newcommand{\la}{\lambda}
\newcommand{\laa}{{\mathscr{L}^a}}
\newcommand{\lab}{{\mathscr{L}^a_b}}
\newcommand{\lbb}{{\mathscr{L}_b}}

\newcommand{\oo}{\Omega}
\newcommand{\oet}{{{\oo^{\eta}}}}
\newcommand{\oola}[1]{\oo_{#1}}
\newcommand{\ola}{{\Omega_\la}}
\newcommand{\ot}{{\widetilde{\Omega}}}
\newcommand{\outr}[1]{\mathscr{O}_{#1}}

\newcommand{\xx}{\bar x}

\newcommand{\trexy}{\begin{minipage}{2 cm} $3\times x=1.0135$\\$y=0.1012$\end{minipage}}
\newcommand{\isotri}{\begin{minipage}{1.5 cm} isosceles\\ triangle \end{minipage}}
\newcommand{\equitri}{\begin{minipage}{1.5 cm} equilateral\\ triangle \end{minipage}}
\newcommand{\qrquadr}{\begin{minipage}{1.5 cm} quasi-\\ regular\\ quadrilater \end{minipage}}
\newcommand{\qrpent}{\begin{minipage}{1.5 cm} quasi-\\ regular\\ pentagone \end{minipage}}
\newcommand{\regpent}{\begin{minipage}{1.5cm} regular\\ pentagone \end{minipage}}
\newcommand{\regex}{\begin{minipage}{1.5cm} regular\\ hexagone \end{minipage}}
\newcommand{\regNgon}{\begin{minipage}{1.5cm} regular\\ $N$-gone \end{minipage}}
\newcommand{\Da}{
   \begin{tikzpicture}[x=2mm,y=2mm]
    \draw (0, 3.5);
    \draw[dashed,very thin] (0,0) circle (3);
    \draw[thick] (0,0) circle (1);
    \end{tikzpicture}
}
\newcommand{\isoscUno}{
     \begin{tikzpicture}[x=2mm,y=2mm]
    \draw (0, 3.5);
    \draw[dashed, very thin] (0,0) circle (3);
    \draw[dashed, very thin] (0,0) circle (1);
    \draw[thick] (-2.828,-1)--(2.828,-1)--(0,1.4)--(-2.828,-1);
    \end{tikzpicture}
}
\newcommand{\isoscDue}{
     \begin{tikzpicture}[x=2mm,y=2mm]
    \draw (0, 3.5);
    \draw[dashed, very thin] (0,0) circle (3);
    \draw[dashed, very thin] (0,0) circle (1);
    \draw[thick] (90:3)--(230:3)--(310:3)--(90:3);
    \end{tikzpicture}
}
\newcommand{\equi}{
     \begin{tikzpicture}[x=2mm,y=2mm]
    \draw (0, 3.5);
    \draw[dashed, very thin] (0,0) circle (3);
    \draw[dashed, very thin] (0,0) circle (1);
    \draw[thick] (90:3)--(210:3)--(330:3)--(90:3);
    \end{tikzpicture}
}
\newcommand{\quadriirreg}{
     \begin{tikzpicture}[x=2mm,y=2mm]
    \draw (0, 3.5);
    \draw[dashed, very thin] (0,0) circle (3);
    \draw[dashed, very thin] (0,0) circle (1);
    \draw[thick] (5.801:3)--(121.99:3)--(238.19:3)--(-5.801:3)--(5.801:3);
    \end{tikzpicture}
}
\newcommand{\squareFigure}{
     \begin{tikzpicture}[x=2mm,y=2mm]
    \draw (0, 3.5);
    \draw[dashed, very thin] (0,0) circle (3);
    \draw[dashed, very thin] (0,0) circle (1);
    \draw[thick] (45:3)--(135:3)--(225:3)--(-45:3)--(45:3);
    \end{tikzpicture}
}
\newcommand{\pentairregUno}{
     \begin{tikzpicture}[x=2mm,y=2mm]
    \draw (0, 3.5);
    \draw[dashed, very thin] (0,0) circle (3);
    \draw[dashed, very thin] (0,0) circle (1);
    \draw[thick] (-4:3)--(4:3)--(90.32:3)--(180.08:3)--(269.83:3)--(-4:3); 
    \end{tikzpicture}
}
\newcommand{\pentag}{
\begin{tikzpicture}[x=2mm,y=2mm]
    \draw (0, 3.5);
    \draw[dashed, very thin] (0,0) circle (3);
    \draw[dashed, very thin] (0,0) circle (1);
    \draw[thick] (10:3)--(82:3)--(154:3)--(226:3)--(298:3)--(10:3);
    \end{tikzpicture}
}
\newcommand{\esag}{
\begin{tikzpicture}[x=2mm,y=2mm]
    \draw (0, 3.5);
    \draw[dashed, very thin] (0,0) circle (3);
    \draw[dashed, very thin] (0,0) circle (1);
    \draw[thick] (0:3)--(60:3)--(120:3)--(180:3)--(240:3)--(300:3)--(0:3);
    \end{tikzpicture}
}
\newcommand{\Db}{
   \begin{tikzpicture}[x=2mm,y=2mm]
    \draw (0, 3.5);
    \draw[thick] (0,0) circle (3);
    \draw[dashed, very thin] (0,0) circle (1);
    \end{tikzpicture}
}
\numberwithin{equation}{section}
\newtheorem{theorem}{Theorem}[section]
\newtheorem{proposition}[theorem]{Proposition}
\newtheorem{corollary}[theorem]{Corollary}
\newtheorem{lemma}[theorem]{Lemma}
\newtheorem{remark}[theorem]{Remark}

\newtheorem{definition}[theorem]{Definition}

\begin{document}

\title[Optimal sets for a class of minimization problems with convex constraints]{Optimal sets for a class of minimization problems with convex constraints}

\author[C. Bianchini, A. Henrot]{Chiara Bianchini, Antoine Henrot}

\address{C. Bianchini, A. Henrot: Institut Elie Cartan, Universit\'e Henri
Poincar\'e Nancy, Boulevard des Aiguillettes B.P. 70239, F-54506 Vandoeuvre-les-Nancy Cedex, France}
\email{chiara.bianchini@iecn.u-nancy.fr}

\email{antoine.henrot@iecn.u-nancy.fr}

\date{}

\keywords{Convex geometry, shape optimization, isoperimetric inequalities, length, area} \subjclass{52A10, 52A38, 52A40, 49Q10}

\begin{abstract}
We look for the minimizers of the functional
$\jla{\la}(\oo)=\la|\oo|-P(\oo)$ among planar convex domains constrained to lie into a given ring. 
We prove that, according to the values of the parameter $\la$, the solutions are either a disc or a polygon. 
In this last case, we describe completely the polygonal solutions by reducing the problem to a finite dimensional optimization problem. 
We recover classical inequalities for convex sets involving area, perimeter and inradius or circumradius and find a new one.
\end{abstract}

\maketitle

\section{Introduction}
Shape optimization problems for geometric functionals as the volume and the perimeter have always aroused a large interest; the most famous examples are inequalities of the isoperimetric type. 
In particular in the classical isoperimetric inequality one looks for a set minimizing the perimeter among all the sets of fixed area or, equivalently, for a set maximizing the area among all the sets of fixed perimeter. 
On the other hand one can consider reverse isoperimetric type inequalities. 
Of course, this makes sense only working with supplementary constraints like convexity or involving inradius and/or circumradius in order to avoid degenerate solutions. 
Namely one can maximize the perimeter among convex sets with fixed volume contained in some given ball or, analogously, minimize the
volume among sets of fixed perimeter which contain a given ball. 
The analysis of such classical problems naturally leads to the study of critical points of functionals of the type 
\begin{equation}\label{Jla1}
\jla{\la}(\oo)=\la|\oo|-P(\oo),
\end{equation}
where $|\cdot|$ is the area, $P(\cdot)$ is the perimeter and $\la$ stands for some Lagrange multiplier. 

Another motivation is to get geometric inequalities for convex sets like in \cite{F} or \cite{Kr} (see \cite{SA} for a good overview of such inequalities).
In particular in \cite{F} J. Favard investigated some functionals of the area and the perimeter which are homogeneous in $P$ and $|\cdot|^{1/2}$; in particular he studied the maximum for the functional $P(\oo)/\sqrt{|\oo|}$ among convex sets contained in an annular ring and he proved that the optimal set is a polygon which is inscribed in the exterior ball and all of its sides, except at most one, are tangent to the interior disk. 
The same functional had been investigated by K. Ball in \cite{B} where he presents a reverse isoperimetric inequality in the $N$-dimensional case substituting the constraints on the inradius and circumradius by considering classes of affine equivalent convex bodies, rather than individual bodies.
In particular he proved that for any convex set $K\subseteq \rn$ there exists an affine image $F(K)$ for which 
$$
\frac{P(F(K))}{|F(K)|^{\frac {N-1}N}},
$$
is no larger than the corresponding expression for a regular $N$-dimensional tetrahedron.

In this paper we choose to consider the following minimization problem for every value of the parameter $\la\ge 0$:
\begin{equation}\label{PB}
\min_{\oo\in\C} \la|\oo|-P(\oo),
\end{equation}
where:
$$
\C=\{K\subseteq \R^2 \ \ K \text{ convex, }D_a\subseteq K \subseteq D_b\};
$$
(here and later $D_r$ is the ball of radius $r$ with center at the origin). 
Notice that the class $\C$ is compact with respect to the Hausdorff distance, moreover the functional $ \la|\oo|-P(\oo)$ is bounded from below by $\la|D_a|-P(D_b)$, and continuous thanks to the convexity constraint (see e.g. \cite{HP}); hence the minimum in (\ref{PB}) is in fact achieved for every value of $\la\ge 0$. 
For a more general existence result for minimum problems in the class of convex sets, we refer to \cite{BG}.

In the paper we present a description of optimal sets to Problem (\ref{PB}); more precisely we prove the following result.
\begin{theorem}\label{main}
For every $\la\ge 0$ there exists an optimal set $\ola$ which solves Problem (\ref{PB}).
In particular
\begin{itemize}
\item if $0\le\la\le \frac 1{2b}$ then $\ola=D_b$;
\item if $\frac 1{2b}<\la<\frac 2a$ then $\ola$ is a polygon;
\item if $\la> \frac 2a$ then $\ola=D_a$.
\end{itemize}
\end{theorem}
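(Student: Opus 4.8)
The plan is to analyze the shape functional $\jla{\la}(\oo) = \la|\oo| - P(\oo)$ via its behavior under two competing perturbations: shrinking $\oo$ toward $D_a$ (which decreases both area and perimeter) and enlarging $\oo$ toward $D_b$ (which increases both). The sign of $\la$ relative to certain critical thresholds determines which effect dominates, and the three regimes in the statement correspond exactly to: the perimeter term always winning (so we maximize $P$, pushing out to $D_b$); the area term always winning (so we minimize area, collapsing to $D_a$); and an intermediate regime where neither extreme is optimal and the minimizer must be a polygon.

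First I would establish existence, which is already granted by the compactness of $\C$ in the Hausdorff metric together with continuity of $|\cdot|$ and $P(\cdot)$ on convex sets, as noted before the theorem. Next, for the first regime I would show: if $0 \le \la \le \frac{1}{2b}$ then $D_b$ is optimal. The idea is that for a convex set $K \subseteq D_b$, enlarging $K$ to a slightly bigger convex set $K'$ (with $K \subseteq K' \subseteq D_b$) produces $|K'| - |K| \le \frac{1}{2b}\bigl(P(K') - P(K)\bigr)$ heuristically — more precisely one compares, for a convex body inside a disc of radius $b$, the gain in area against the gain in perimeter when pushing the boundary outward, using that a ``radial'' enlargement by an annular sliver of width $\delta$ near a boundary point at distance $\rho \le b$ from the center contributes area comparable to $\rho\,\delta$ times arc length and perimeter comparable to $\delta$ times arc length. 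A clean way is: for any $K \in \C$, $|D_b| - |K| \le \tfrac{b}{2}\bigl(P(D_b) - P(K)\bigr)$ — this is a known inequality comparing a convex set to its circumscribed disc — which gives $\jla{\la}(D_b) \le \jla{\la}(K)$ as soon as $\la b/2 \le 1$, i.e. $\la \le 1/b$; the sharper constant $1/(2b)$ would come from a more careful argument, perhaps using that the optimal competitor in this regime among non-discs is itself a polygon and checking the inequality there. Symmetrically, for $\la > \frac{2}{a}$ I would show $D_a$ is optimal: for any $K \in \C$ one has $P(K) - P(D_a) \le \tfrac{2}{a}\bigl(|K| - |D_a|\bigr)$, since adding area near the inner disc of radius $a$ costs at least, proportionally, the inverse inradius in perimeter — indeed a convex set containing $D_a$ satisfies $P(K) \le \tfrac{2}{a}|K|$ with equality only for $D_a$ (from $|K| \ge \tfrac{a}{2}P(K)$, the standard inradius inequality, since $K$ is star-shaped about the incenter). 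Then $\jla{\la}(D_a) \le \jla{\la}(K)$ whenever $\la \ge 2/a$.

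The heart of the proof, and the main obstacle, is the middle regime $\frac{1}{2b} < \la < \frac{2}{a}$, where I must rule out both discs and, more subtly, all non-polygonal convex sets. I would argue by local perturbation: suppose $\ola$ is an optimal set whose boundary contains a relatively open arc $\Gamma$ that is not a line segment and does not lie on $\partial D_b$ (if part of $\partial \ola$ coincides with $\partial D_b$, that is an active constraint and handled separately) and is not part of $\partial D_a$. On such an arc the curvature is (in a measure sense) positive and the constraint $D_a \subseteq \ola \subseteq D_b$ is inactive, so I can perform a two-sided variation supported near $\Gamma$. Computing the first variation of $\jla{\la}$ under a normal perturbation $V$ of the boundary gives $\delta \jla{\la}(\ola)[V] = \int_{\partial\ola}(\la - \kappa)V\,d\mathcal{H}^1$ where $\kappa$ is the curvature; optimality forces $\kappa = \la$ a.e. on the free part of the boundary, meaning the free boundary arcs are arcs of circles of radius $1/\la$. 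But such a circular arc of radius $1/\la$ with $1/\la \in (a/2, b)$ cannot be part of $\partial\ola$ for a genuinely optimal set: the second variation (or a direct competitor argument, bulging the arc in or out and comparing) shows circular free arcs are unstable when $\la$ is in this range — flattening them to a chord changes $\jla{\la}$ by a definite amount of the wrong sign, or rather one direction of perturbation strictly decreases the functional, contradicting optimality. Hence the free boundary consists only of line segments, i.e. $\ola$ is a polygon (possibly with some sides on $\partial D_b$, but a ``side'' on $\partial D_b$ would be a circular arc, again excluded by the same instability unless $b = 1/\la$, which is ruled out by $\la < 2/a \le 2/b$... here one must be careful, and the precise bookkeeping of which arcs can survive is exactly where the argument needs care). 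I would close by noting that we must also exclude $\ola = D_a$ and $\ola = D_b$ in this range: evaluating $\jla{\la}$ on $D_a$, $D_b$, and comparing with a simple polygonal competitor (e.g. a large regular polygon inscribed in $D_b$ or circumscribed about $D_a$) shows the polygon beats both discs precisely when $\frac{1}{2b} < \la < \frac{2}{a}$, which also pins down the threshold constants and shows they are sharp.
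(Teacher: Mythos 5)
Your overall architecture (existence by compactness; two extreme regimes handled by global inequalities; middle regime by first/second variation) is reasonable in outline, and your treatment of $\la>2/a$ via the Bonnesen--Fenchel inequality $P(K)\le \frac 2a|K|$ for convex $K\supseteq D_a$ is correct and arguably cleaner than the paper's explicit arc-for-segment substitution (the paper in fact derives Bonnesen--Fenchel \emph{from} its theorem later, so your route simply reverses that implication using the classical reference). But there are two genuine gaps. First, the inequality you propose for the regime $\la\le 1/(2b)$, namely $|D_b|-|K|\le \frac b2\bigl(P(D_b)-P(K)\bigr)$, is false: since $|D_b|=\frac b2 P(D_b)$ it is equivalent to $|K|\ge \frac b2 P(K)$, which already fails for $K$ a concentric disc of radius $b/2$ (the correct inradius inequality has $a$, not $b$, in that place). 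Even if it held it would only reach a threshold of order $1/b$, as you concede, whereas the sharp constant $1/(2b)$ comes from a genuinely local computation: the paper replaces a boundary arc of $D_b$ of half-angle $\eta$ by its chord and finds the change of the functional proportional to $2\la b-4\frac{\eta-\sin\eta}{\eta-\sin\eta\cos\eta}$, and the constant $1/(2b)$ is exactly the limit of the second term as $\eta\to0$ combined with the bound $\frac{\eta-\sin\eta}{\eta-\sin\eta\cos\eta}>\frac14$ for all $\eta>0$. No global two-set inequality of the type you write down will detect this constant.

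Second, in the middle regime the decisive steps are asserted rather than proved. The stationarity condition $\kappa=\la$ on free arcs and the ``instability of circular arcs'' is precisely the nontrivial content of the Lamboley--Novruzi result that the paper imports as Theorem \ref{teoJimmy}; more importantly, that theorem only gives ``locally a polygon \emph{in the interior of the annulus}'', and the whole difficulty --- where the thresholds $1/(2b)$ and $2/a$ actually come from --- is excluding arcs of $\bd D_b$ and $\bd D_a$, where the constraint is active and only one-sided perturbations are admissible. Your parenthetical bookkeeping here is wrong ($\la<2/a$ does not rule out $\la=1/b$, since $2/a>2/b$), and you acknowledge the argument ``needs care'' exactly at this point; the paper settles it by the two explicit one-sided constructions of Theorem \ref{teopolyg} (cutting an arc of $D_b$ by a chord, cutting an arc of $D_a$ by two tangents), which yield the conditions $\la>1/(2b)$ and $\la<2/a$ respectively. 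Finally, even granting that the free boundary consists only of segments and contains no arcs, you must still show the number of segments is finite before concluding that $\ola$ is a polygon; the paper needs the second-order analysis of Theorem \ref{lemmaAaAb} and Corollary \ref{Nfinite} for this, and your proposal does not address it.
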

The proof of this result can be found in Corollary \ref{corollaryPolyg} for the case  $\frac 1{2b}<\la<\frac 2a$, and in Theorem \ref{teoDaDb} for $\la\le\frac 1{2b}$ or $\la\ge \frac 2a$. 
The case of $\la=2/a$ is discussed in details in Remark \ref{la=2/a}.
A further description of the optimal polygon(s) is presented in Section \ref{SecNumbersOfSides}.
Notice that, obviously, the functional is invariant under rotations, thus there is no uniqueness of solution.
Nevertheless we will see that, except for a finite number of values for $\la$, the solution is unique up to rotation.

In order to prove that solutions to Problem (\ref{PB}) are either polygons  or the given balls $D_a$ or $D_b$, the idea is to analyse optimality conditions for (\ref{PB}) either from a geometric or from an analytic point of view. 
In particular the notion of \emph{support function} of the set $K$ will be useful: $h=h_K$ is the function $h:\R^2\to\R$ such that
\begin{eqnarray*}
h_K(u)&=&\sup_{x\in K}<x;u>\quad\text{ for every }u\in\R^2.
\end{eqnarray*}

We consider the functional $\jla{\la}$ defined in (\ref{Jla}), on the class of convex subsets of $\R^2$; hence Problem (\ref{PB}) can be rewritten as 
$$
\min_{\oo\in\C} \jla{\la}(\oo).
$$
Moreover,  the functional $\jla{\la}$ can be rewritten in terms of its support function as follows:
$$
\jla{\la}(\oo)= \frac{\la}2 \int_0^{2\pi} (h^2-h'^2)\,d\theta -\int_0^{2\pi}h\,d\theta.
$$
Recalling that the convexity of a set $K$ can be expressed in terms of its support function as $h''_K+h_K\ge 0$, the class $\C$ is reduced to 
$$
\C=\{ K\subseteq\R^2\ :\ a\le h_K\le b,\ h_K''+h_K \ge 0 \text{ for every }\theta\in[0,2\pi]\}.
$$

A fundamental preliminary result is expressed in theorem below, which is due to J. Lamboley and A. Novruzi (see \cite[Theorem {2.1}]{LN}). 
They considered generic functionals of the form 
$$
\int_0^{2\pi}G(\theta,u(\theta),u'(\theta))d\theta, 
$$
where $u$ stands either for the support function or the gauge function of a planar convex domain, and they proved that, under a concavity property of $G(\theta,u,p)$ solutions to the associated minimum problem are (locally) polygons. 
Applying their result to the formulation of $\jla{\la}$ in terms of support function, we get the following.
\begin{theorem}[\cite{LN}]\label{teoJimmy}
For every $\la\ge 0$, if $\ola$ is a solution to (\ref{PB}) then $\ola$ is locally a polygon in the interior of the annulus $D_b\setminus D_a$.
\end{theorem}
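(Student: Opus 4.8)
The plan is to recognise Problem (\ref{PB}) as a special case of the abstract setting of \cite[Theorem 2.1]{LN} and to quote that result; the work consists only in recasting the functional and in checking the structural hypothesis.

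First I would rewrite $\jla{\la}$ as an integral functional of the support function. As already recalled in the text, for a planar convex body $\oo$ with support function $h$ (with $h\in H^1(0,2\pi)$) one has $|\oo|=\frac12\int_0^{2\pi}(h^2-h'^2)\,d\theta$ and $P(\oo)=\int_0^{2\pi}h\,d\theta$, so that
\[
\jla{\la}(\oo)=\int_0^{2\pi}G\bigl(\theta,h(\theta),h'(\theta)\bigr)\,d\theta,\qquad
G(\theta,u,p)=\tfrac{\la}{2}\,u^{2}-\tfrac{\la}{2}\,p^{2}-u .
\]
Since $\C=\{a\le h\le b,\ h''+h\ge0\}$, the set $\ola$ minimises $\int_0^{2\pi}G(\theta,h,h')\,d\theta$ among support functions subject to the inclusion (box) constraint $a\le h\le b$ and to the convexity constraint $h''+h\ge0$; both types of constraint are precisely those allowed in \cite{LN}.

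Next I would verify the hypothesis under which \cite[Theorem 2.1]{LN} forces optimal shapes to be locally polygonal. Here $G$ is independent of $\theta$, is $C^{\infty}$, and is a quadratic polynomial in $(u,p)$ with no mixed term; the one substantive requirement — concavity of $p\mapsto G(\theta,u,p)$ — holds because $\partial^{2}_{pp}G\equiv-\la\le0$, and it is \emph{strictly} concave (indeed with a uniform negative bound) as soon as $\la>0$, the remaining mild assumptions of \cite{LN} being immediate from this polynomial form. Applying \cite[Theorem 2.1]{LN} then yields that, on the set of directions $\theta$ where the box constraint is inactive, i.e.\ where $a<h_{\ola}(\theta)<b$, the nonnegative measure $h_{\ola}''+h_{\ola}$ is a locally finite sum of Dirac masses; equivalently, the corresponding part of $\bd\ola$ is a locally finite union of line segments. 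Translating back, the set $\{\theta:\ a<h_{\ola}(\theta)<b\}$ corresponds exactly to the portion of $\bd\ola$ lying in the interior of the annulus $D_b\setminus D_a$ (on such an arc the supporting lines avoid $\overline{D_a}$ and the corresponding boundary points stay away from $\bd D_b$), so the previous step is precisely the assertion that $\ola$ is locally a polygon there. Finally, for $\la=0$ the strict concavity of $G$ in $p$ is lost, but then $\jla{0}(\oo)=-P(\oo)$ is minimised by $\oo=D_b$, whose boundary does not meet the interior of the annulus, so the statement holds vacuously.

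The argument is thus essentially a citation, and I expect the only genuine care to be needed in matching the hypotheses of \cite[Theorem 2.1]{LN} — in particular that the simultaneous presence of the box constraint and of the convexity constraint is covered by their framework — together with the harmless $\la=0$ degeneracy noted above; the real difficulties of the paper lie not here but in the subsequent finite-dimensional reduction and in the classification of the optimal polygons.
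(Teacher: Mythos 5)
Your proposal is correct and follows essentially the same route as the paper, which likewise proves Theorem \ref{teoJimmy} purely by rewriting $\jla{\la}$ as $\int_0^{2\pi}G(\theta,h,h')\,d\theta$ with $G(\theta,u,p)=\frac{\la}{2}(u^2-p^2)-u$ and invoking \cite[Theorem 2.1]{LN} via the concavity of $G$ in $p$. Your explicit treatment of the degenerate case $\la=0$ (where the minimizer is $D_b$ and the claim is vacuous) is a small point the paper leaves implicit, but it does not change the argument.
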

Moreover, using \cite[Theorem {2.2}]{LN}, it is possible to get a range of values of $\la$ for which solutions are polygons.
However, the application of their result yields a range of value $\frac 1b\le\la\le\frac 1a$ while we are able to get the same result for $\frac 1{2b}<\la<\frac 2a$. 
The reason is the following: we actually consider more general perturbations of a convex set that they did.
Namely in the proof of Theorem \ref{teopolyg} we consider perturbations of a generic set $\oo$ of the form $\oet$, expressed by the support functions as
$$
h_{\oet}(\theta)=h_\oo(\theta)+w(\theta,\eta),
$$
with
$$
w(\theta,\eta)=\Big( h_{T_\eta}(\theta)-h_\oo(\theta) \Big)
\chi_{(0,\eta)}(\theta)\quad\text{ or }\quad w(\theta,\eta)=\Big( h_{S_\eta}(\theta)-h_\oo(\theta) \Big)\chi_{(0,2\eta)}(\theta),
$$
where $T_\eta$ is the triangle of vertices $(0,0),(b,0),(b\cos\eta,b\sin\eta)$ and $S_\eta$ is the quadrilateral of vertices
$(0,0),(a,0),(a,a\tan\eta),(a\cos2\eta,a\sin2\eta)$ (see Figure \ref{step1} for details). 
These kind of perturbations are not of the simple type $h_{\oet}(\theta)=h_\oo(\theta)+t \eta(\theta)$ considered in \cite{LN}.


In Section \ref{SecNumbersOfSides} a detailed characterization of optimal polygons is presented.
In particular it is shown that optimal polygons are either inscribed in the exterior ball $D_b$ or circumscribed to the interior ball $D_a$.
This is proved via refinements of a natural geometric argument of ``anti-symmetrization''.
It is in fact evident that an optimal polygon $\oo$ cannot contain two consecutive \emph{free} sides, that is two consecutive sides which are neither a chord of $D_b$ nor tangent to $D_a$. 
Otherwise the perturbation in Figure \ref{pcmov} would be possible, in contradiction with the optimality of the set $\oo$. 
More precisely, assume there exist two free sides $\overline{AB}, \overline{BC}$; we consider the set $\oo_t$ obtained as a perturbation of the set $\oo$ by moving the vertex $B$ in the direction $v=\overrightarrow{AC}$ for a time $t\in\R$ (notice that all the other vertices are fixed). 
\begin{figure}[h]
\centering
\begin{tikzpicture}[x=0.4mm,y=0.4mm]
\draw[very thin, dashed] (-53,0)--(53,0); 
\draw[very thin] (20,0) arc(0:180:20) ;
\draw[very thin] (50,0) arc(0:180:50);
\draw[thick] (-30,0) node(A)[below]{$A$}--(-10,35)node(B)[above]{$B$}--(40,0) node(C)[below]{$C$};
\draw (-23,17) node[left]{$\oo$};
\draw[very thick, ->] (0,35)--(10,35); \draw (5,35) node[above]{$v$};
\draw (0,-10) node{$t=0$};
\begin{scope}[xshift=-150]
\draw[very thin, dashed] (-53,0)--(53,0); 
\draw[very thin] (20,0) arc(0:180:20) ;
\draw[very thin] (50,0) arc(0:180:50);
\draw[thick] (-30,0) node(A)[below]{$A$}--(-17,35)node(B)[above right]{$B_t$}--(40,0) node(C)[below]{$C$};
\draw (-25,17) node[left]{$\oo_t$};
\draw (0,-10) node{$t<0$};
\end{scope}
\begin{scope}[xshift=155]
\draw[very thin, dashed] (-53,0)--(53,0); 
\draw[very thin] (20,0) arc(0:180:20) ;
\draw[very thin] (50,0) arc(0:180:50);
\draw[thick] (-30,0) node(A)[below]{$A$}--(0,35)node(B)[above]{$B_t$}--(40,0) node(C)[below]{$C$};
\draw (-15,17) node[left]{$\oo_t$};
\draw (0,-10) node{$t>0$};
\end{scope}
\end{tikzpicture}
\caption{A parallel chord movement: optimal sets cannot have ``free'' sides.}\label{pcmov}
\end{figure}
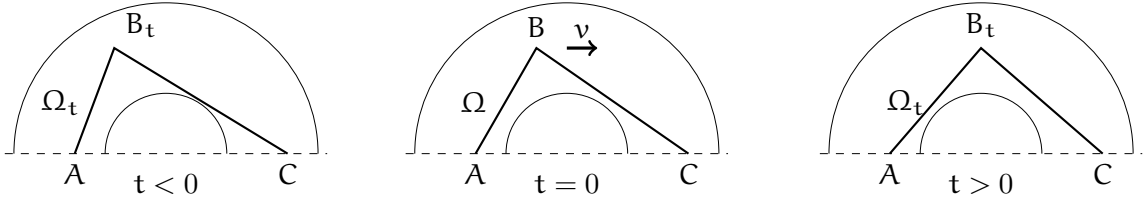
This is a so called \emph{parallel chord movement}, as $\oo_t$ is obtained from $\oo$ by moving its lines (only those contained into the half plane determined by the line $AC$ and the point $B$), along the direction $v$.
For small times the set $\oo_t$ is still a convex set and in particular it still belongs to the class $\C$.
Moreover it is clear that $|\oo_t|=|\oo|$ for every $t\in\R$ and that there exists $\bar t$ such that $P(\oo_{\bar t})>P(\oo)$; hence $\oo$ cannot be optimal.


\section{{Main results}}
\subsection{First characterizations}
\begin{theorem}\label{teopolyg}
Let $\ola$ be a minimizer of (\ref{PB}), then for $1/2b<\la<2/a$, $\bd\ola$ does not contain neither arcs of $D_a$ nor arcs of $D_b$.
\end{theorem}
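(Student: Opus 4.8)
The plan is to argue by contradiction, separately ruling out that $\partial\ola$ contains an arc of $D_b$ (using the $\la<2/a$ side of the range) and that it contains an arc of $D_a$ (using the $\la>1/2b$ side). For each case I would construct an explicit local perturbation of the optimal set that strictly decreases $\jla{\la}$, as announced in the introduction through the triangles $T_\eta$ and quadrilaterals $S_\eta$. Concretely, suppose $\partial\ola$ contains a nontrivial arc of the outer circle $\partial D_b$; after a rotation we may assume the arc spans angles $\theta\in(0,2\eta_0)$ for some small $\eta_0>0$, so that near this arc $h_{\ola}(\theta)=b$. I would then replace the arc on $(0,\eta)$ by the corresponding piece of the support function of the triangle $T_\eta$ with vertices $(0,0),(b,0),(b\cos\eta,b\sin\eta)$ — i.e. cut the cap of the disc by a chord — keeping $\ola$ unchanged outside $(0,\eta)$. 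This yields a new set $\oet\in\C$ (convexity is preserved since we are only flattening, and the inclusion $D_a\subseteq\oet\subseteq D_b$ is clear for $\eta$ small), and I would compute $\frac{d}{d\eta}\jla{\la}(\oet)\big|_{\eta=0^+}$, or more simply expand $\jla{\la}(\oet)-\jla{\la}(\ola)$ to leading order in $\eta$.

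The key computation is that cutting the circular cap of half-angle $\eta$ from a disc of radius $b$ removes area of order $\eta^3$ (the cap area is $\tfrac{b^2}{2}(2\eta-\sin 2\eta)=O(\eta^3)$) while it shortens the perimeter by replacing an arc of length $2b\eta$ with a chord of length $2b\sin\eta$, a decrease of order $\eta^3$ as well; but the precise constants matter. Writing $2\eta-\sin2\eta=\tfrac{4}{3}\eta^3+o(\eta^3)$ and $2\eta - 2\sin\eta = \tfrac13\eta^3+o(\eta^3)$, I get
$$
\jla{\la}(\oet)-\jla{\la}(\ola)=-\la\cdot\frac{b^2}{2}\cdot\frac{4}{3}\eta^3 + b\cdot\frac13\eta^3 + o(\eta^3)=\frac{\eta^3}{3}\bigl(b-2\la b^2\bigr)+o(\eta^3).
$$
Hence if $\la>\tfrac{1}{2b}$ this difference is negative for $\eta$ small, contradicting optimality; this rules out arcs of $D_b$ under the hypothesis $\la>1/2b$. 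Symmetrically, if $\partial\ola$ contains an arc of $D_a$, I would \emph{add} a small triangular bump outside the inner circle, using the quadrilateral $S_\eta$ with vertices $(0,0),(a,0),(a,a\tan\eta),(a\cos2\eta,a\sin2\eta)$ — replacing the arc of half-angle $2\eta$ of $D_a$ by two tangent segments. This increases area by a term of order $\eta^3$ and increases perimeter by a term of order $\eta$ (since $2a\tan\eta - 2a\eta\cdot(\text{arc})$… more carefully: two tangent segments of total length $2a\tan\eta$ replace an arc of length $2a\eta$, a gain of $\tfrac{2a}{3}\eta^3+o(\eta^3)$, \emph{but} the area gain is $a^2(\tan\eta-\eta)=\tfrac{a^2}{3}\eta^3+o(\eta^3)$), so $\jla{\la}(\oet)-\jla{\la}(\ola)=\la\tfrac{a^2}{3}\eta^3 - \tfrac{2a}{3}\eta^3+o(\eta^3)=\tfrac{\eta^3}{3}(\la a^2-2a)+o(\eta^3)$, which is negative when $\la<2/a$. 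Again this contradicts optimality.

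The main obstacle I expect is purely bookkeeping: verifying that the perturbed sets $\oet$ genuinely stay in the class $\C$ — in particular that the modified support function still satisfies $h''+h\ge 0$ in the distributional sense across the two junction angles $0$ and $\eta$ (resp. $2\eta$), which amounts to checking the new "corners" point the right way (convex, not reflex) — and getting the cubic-order constants exactly right, including the contribution of the small wedge near angle $0$ where the arc meets the adjacent side of $\ola$, which could a priori contribute at order $\eta^2$ and must be shown not to. One should also note that the perturbation must be localized to an arc strictly interior to $\partial D_b$ (resp. disjoint from the tangency points with $D_a$) so that no other constraint is activated; since $\partial\ola$ containing an arc means it contains a whole sub-arc with interior points, such a localization is available. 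Once these points are checked, the sign of the leading $\eta^3$ coefficient does the rest.
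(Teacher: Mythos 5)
Your proposal is correct and follows essentially the same route as the paper: cut the cap of $D_b$ by a chord (negative for $\la>1/2b$ since area loss $\sim\frac{2}{3}\la b^2\eta^3$ beats perimeter loss $\sim\frac{b}{3}\eta^3$) and replace an arc of $D_a$ by two tangent segments (negative for $\la<2/a$ since the exact identity $\jla{\la}(\oet)-\jla{\la}(\ola)=a(\tan\eta-\eta)(\la a-2)$ holds), with the same membership check in $\C$ that the paper enforces via $\cos\eta\ge a/b$. Only note that your opening sentence swaps which inequality handles which circle --- the body of your argument uses the correct pairing, matching the paper's Steps 1 and 2.
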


\begin{corollary}\label{corollaryPolyg}
For every $1/2b<\la<2/a$ minimizers to (\ref{PB}) are polygons.
\end{corollary}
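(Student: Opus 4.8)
The plan is to deduce Corollary~\ref{corollaryPolyg} from Theorem~\ref{teopolyg} together with Theorem~\ref{teoJimmy}, the local polygonality result of Lamboley--Novruzi. Fix $\la$ with $1/2b<\la<2/a$ and let $\ola$ be a minimizer of (\ref{PB}); such a minimizer exists by the compactness of $\C$ and the continuity of $\jla{\la}$ remarked in the introduction. By Theorem~\ref{teoJimmy}, $\ola$ is \emph{locally} a polygon inside the open annulus $D_b\setminus D_a$: every point of $\bd\ola$ that lies strictly between the two circles has a neighbourhood in which $\bd\ola$ is a finite union of segments. By Theorem~\ref{teopolyg}, $\bd\ola$ contains no arc of $D_a$ and no arc of $D_b$; in particular $\bd\ola\cap\bd D_a$ and $\bd\ola\cap\bd D_b$ are closed sets with empty interior relative to $\bd\ola$.

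Next I would promote the \emph{local} polygonality to a \emph{global} one by a compactness argument on the circle of directions. Working with the support function $h=h_{\ola}$ on $[0,2\pi]$, the set $\bd\ola$ carries a natural parametrization by the outer normal angle $\theta$, and the statement ``$\bd\ola$ is locally a polygon near a boundary point in the open annulus'' translates into: near the corresponding $\theta_0$, the curvature measure $h''+h$ (a nonnegative measure by convexity) is a finite sum of Dirac masses, i.e.\ $h$ is locally affine-in-the-support-function-sense, equivalently a finite intersection of half-planes. The set $\Theta\subseteq[0,2\pi]$ of normal directions whose corresponding boundary point lies on $\bd D_a\cup\bd D_b$ is closed; I claim it is finite. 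Indeed by Theorem~\ref{teopolyg} it contains no nondegenerate arc, so if it were infinite it would have an accumulation point $\theta_*$. Two cases: either the boundary point at $\theta_*$ lies strictly inside the annulus --- impossible, since then a whole neighbourhood of that boundary point lies in the open annulus, hence is a finite union of segments, forcing $\Theta$ to be locally finite there; or the boundary point at $\theta_*$ lies on $\bd D_a$ or $\bd D_b$ itself. In the latter case one uses that $D_a\subseteq\ola\subseteq D_b$ to squeeze: a sequence of boundary points of $\ola$ on, say, $\bd D_b$ accumulating at a point $p\in\bd D_b$ forces, by convexity of $\ola$ and the supporting line of $D_b$ at $p$, that an arc of $\bd D_b$ through $p$ is contained in $\bd\ola$, contradicting Theorem~\ref{teopolyg}; the case of $\bd D_a$ is symmetric, using that $\ola\supseteq D_a$ and the tangent line to $D_a$. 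Hence $\Theta$ is finite.

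Once $\Theta$ is finite, $\bd\ola$ decomposes into finitely many closed arcs with endpoints among the boundary points indexed by $\Theta$, and each such arc lies (except possibly for its endpoints) in the open annulus. On each of these arcs Theorem~\ref{teoJimmy} applies and, since the arc is connected and compact, a standard covering argument upgrades ``locally a finite union of segments'' to ``globally a finite union of segments'': cover the arc by finitely many of the neighbourhoods provided by Theorem~\ref{teoJimmy}, and observe that on overlaps the local segment structures must agree, so the whole arc is a polygonal line with finitely many vertices. Consequently $\bd\ola$ is globally a finite union of segments, i.e.\ $\ola$ is a convex polygon, which is the assertion of the corollary.

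The main obstacle I anticipate is the passage from the \emph{local} polygonality of Theorem~\ref{teoJimmy} to a \emph{global} finite polygon, i.e.\ ruling out infinitely many vertices accumulating at a point of $\bd\ola$. Inside the open annulus this is exactly what Theorem~\ref{teoJimmy} forbids, so the delicate point is the behaviour near $\bd D_a$ and $\bd D_b$, where one must combine Theorem~\ref{teopolyg} (no genuine arcs) with the sandwich $D_a\subseteq\ola\subseteq D_b$ to prevent vertices of $\ola$ from accumulating on the two bounding circles. I expect this to be handled by the elementary convexity/supporting-line argument sketched above, so that modulo this careful bookkeeping the corollary follows immediately from the two theorems; in the paper it is plausibly stated with only a one-line justification, since Theorem~\ref{teopolyg} does the substantive work.
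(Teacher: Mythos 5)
You have correctly identified the delicate point — upgrading the local polygonality of Theorem~\ref{teoJimmy} to global finiteness of the number of sides — but your treatment of it contains a genuine error. The claim that ``a sequence of boundary points of $\ola$ on $\bd D_b$ accumulating at a point $p\in\bd D_b$ forces, by convexity and the supporting line at $p$, that an arc of $\bd D_b$ through $p$ is contained in $\bd\ola$'' is false. Take the convex hull of $\{b\,\ee^{i/n}\,:\,n\ge 1\}\cup\{(b,0)\}$ together with $D_a$: this is a convex set in $\C$ whose boundary contains infinitely many chords of $D_b$ accumulating at $(b,0)$, touches $\bd D_b$ at infinitely many points, and yet contains no arc of $\bd D_b$ whatsoever. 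The analogous circumscribed configuration (infinitely many tangent segments to $D_a$ with tangency points accumulating) defeats the $\bd D_a$ half of your squeeze in the same way. So no contradiction with Theorem~\ref{teopolyg} arises, and your compactness argument does not rule out a minimizer with infinitely many sides accumulating on one of the two circles. Note also that your ``Case 1'' is vacuous: the set of boundary points lying on $\bd D_a\cup\bd D_b$ is closed, so any accumulation point of it already lies on one of the circles.

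The paper closes exactly this gap by a quantitative, not a soft, argument. Its proof of the corollary cites Corollary~\ref{Nfinite}, which in turn rests on Theorem~\ref{teolaalablb} (every segment of $\bd\ola$ is a chord of $D_b$ and/or tangent to $D_a$, i.e.\ belongs to $\lab\cup\laa\cup\lbb$) and on Theorem~\ref{lemmaAaAb}, where first- and second-order optimality conditions for the finite-dimensional angle formulation show that $\Aa$ consists of copies of a single angle and $\Ab$ contains at most two distinct values. Since all central angles are positive, take only finitely many distinct values, and sum to at most $\pi$, there can be only finitely many of them; this is what excludes the accumulating-chords configuration above, which is perfectly convex and admissible but fails the optimality conditions. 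If you want to complete your argument you must replace the ``accumulation forces an arc'' step by a perturbation or optimality argument of this kind; the convexity constraint alone will not do it.
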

\begin{proof}
By Theorem \ref{teoJimmy} for every value of $\la\ge 0$ a minimizer
can be composed only by segments and arcs of $D_a$ and $D_b$. We
will prove in Corollary \ref{Nfinite} that the number of segments is
necessarily finite. Thus using Theorem \ref{teopolyg} the thesis
follows.
\end{proof}

\begin{proof}[Proof of Theorem \ref{teopolyg}]
We split the proof into two steps.
\medskip

Step 1: if $\la>1/2b$ then $\bd\ola$ does not contain arcs of $\bd D_b$.
\smallskip

Let $\oo\in\C$ and assume that it contains an arc of $\bd D_b$ on its  boundary, that is there exists a subinterval of  $[0,2\pi)$ (which for simplicity is assumed to be $(0,\gamma)$ for some $\gamma>0$), such that
$$
\{\theta\in[0,2\pi)\ : \ h_{\oo}(\ee^{i\theta})=b\} \supseteq (0,\gamma).
$$
Let $\eta\in(0,\gamma/2)$ be such that $\cos\eta\ge a/b$ and consider $\oet$ obtained from $\oo$ by cutting a part of the arc by a chord of central angle $\eta$ (see Figure \ref{step1} (a)).
%
%
Notice that, as we choose $\cos\eta\ge a/b$, the new set $\oet$ still belongs to the class $\C$.
\begin{center}
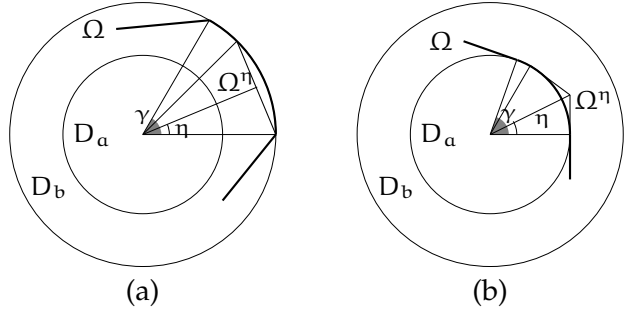
\begin{figure}[h]
\begin{tikzpicture}[x=0.35mm,y=0.35mm]
\draw[very thin] (0,0) circle (50);
\draw (-46,-20) node [right] {\small$D_b$};
\draw (-30,0) node [right] {\small$D_a$};
\draw[very thin] (0,0) circle (30);
\fill[gray] (0,0) -- (7,0) arc (0:60:7)-- cycle;
\draw[very thin] (0,0)--(50,0);
\draw[very thin] (0,0)--(25,43.301270189); 
\draw (0,0) node [above]{\scriptsize$\gamma$};
\draw[thin] (50,0)--(35.355339059,35.355339059);
\draw[very thin] (0,0)--(35.355339059,35.355339059);\draw[very thin] (0,0)--(43,18); 
\draw (10,0) arc (0:22.5:10);
\draw (5:15) node{\scriptsize$\eta$};
\draw[thick] (-10,40) node [left] {\small$\oo$}--(25,43.301270189); 
\draw[thick] (50,0)--(30,-25);
\draw[thick] (50,0) arc (0:60:50);
\draw (35,20) node {\small$\oet$};
\draw (0,-60)node{(a)};
\begin{scope}[xshift=130]
\draw[very thin] (0,0) circle (50);
\draw (-46,-20) node[right] {\small$D_b$};
\draw (-30,0) node [right] {\small$D_a$};
\draw[very thin] (0,0) circle (30);
\fill[gray] (0,0) -- (7,0) arc (0:70:7)-- cycle;
\draw[very thin] (0,0)--(30,0);
\draw[very thin] (0,0)--(10, 28.284271247); 
\draw (7,7) node {\scriptsize$\gamma$};
\draw[thin] (30,0)--(30,15);
\draw[very thin] (0,0)--(30,15); 
\draw[very thin] (10,0) arc (0:30:10);
\draw (13,5) node [right]{\scriptsize$\eta$};
\draw[very thin] (0,0) -- (15, 25.980762114); 
\draw[thin] (15,26.2) -- (30,15);
\draw[thick] (-10,35.355339059) node [left] {\small$\oo$}--(10,28.284);
\draw[thick] (30,-17)--(30,0);
\draw[thick] (30,0) arc (0:70:30);
\draw (28,13) node[right]{\small$\oet$};
\draw (0,-60) node{(b)};
\end{scope}
\end{tikzpicture}
\caption{The constructions in Step 1 and Step 2 respectively.}\label{step1}\label{step2}
\end{figure}
\end{center}
We want to show that $\jla{\la}(\oo)>\jla{\la}(\oet)$; we get
\begin{equation}\label{arch-chordb}
\jla{\la}(\oo)-\jla{\la}(\oet)= \frac b2 (\eta-\sin\eta\cos\eta)\left(\la 2 b- 4\frac{\eta-\sin\eta}{\eta-\sin\eta\cos\eta}\right),
\end{equation}
for every $\eta\in(0,\gamma/2)$ sufficiently small.
As $lim_{\eta\to 0}4\frac{\eta-\sin\eta}{\eta-\sin\eta\cos\eta}=1$ and $\la>1/2b$, for $\eta$ sufficiently small we get
$$
\la 2 b- 4\frac{\eta-\sin\eta}{\eta-\sin\eta\cos\eta}>0,
$$
which gives the desired result.

\medskip

Step 2: if $\la<2/a$ then $\bd\ola$ does not contain arcs of $\bd D_a$.
\smallskip

Consider $\oo\in\C$ and assume that $\bd\oo$ contains an arc of $\bd
D_a$, that is  there exists an subinterval of  $[0,2\pi)$ (which for
simplicity is assumed to be $(0,\gamma)$ for some $\gamma>0$), such
that
$$
\{\theta\in[0,2\pi)\ : \ h_{\oo}(\ee^{i\theta})=a\} \supseteq (0,\gamma).
$$
Let $\eta\in(0,\gamma/2)$ be such that $\cos\eta\ge a/b$ and
consider $\oet$  obtained from $\oo$ by cutting a part of the arc of
$D_a$ of width equals to $2\eta$ by two tangent lines to $D_a$, as
shown in Figure \ref{step2} (b).
%
%
Notice that, choosing $\eta>0$ such that $\cos\eta\ge a/b$, the set $\oet$ still belongs to the class $\C$.
Moreover, comparing $\jla{\la}(\oet)$ and $\jla{\la}(\oo)$ we obtain
$$
\jla{\la}(\oo)-\jla{\la}(\oet)=-a^2(\tan\eta-\eta)(\la-\frac 2a),
$$
which is positive as $\la<2/a$ and hence $\bd\ola$ cannot contain arcs of $D_a$ for every $\la<2/a$.
\end{proof}

\subsection{Reduction to an optimization problem of finite dimension}
We define three classes of segments which will be useful in what
follows. In particular it will turn out that the sides of an optimal
polygon necessarily belong to these classes; as already noticed, in fact, free sides are not allowed for an optimal polygon.
We here prove that in fact they are necessarily either chord of $D_b$ or tangent side to $D_a$.

A similar representation for convex sets in terms of their central angles has been used also for other type of functionals in \cite{C}.
\begin{definition}

The class $\laa$ represents the class of tangent sides to $D_a$
which are not chords of $D_b$. In particular if $P_iP_j$ and
$P_jP_k$ are segments tangent to $D_a$,  with $P_i,P_k\in\bd D_a$,
the segments $P_iP_j$ and $P_jP_k$ are identified in the class
$\laa$ as the same element (and hence they are counted only once).

The class $\lbb$ represents the class of segments which are chords of $D_b$ not tangent to $D_a$.
In particular the elements of $\lbb$ are half chords and each couple of half chords is in fact identified in the same element of $\lbb$.
Hence for each chord $P_iP_j$ of $D_b$ if $Q_i$ is its medium point, the segments $P_iQ_i$ and $Q_iP_j$ are identified in class $\lbb$.

The class $\lab$ represents the class of segments which are at the same time tangent to $D_a$ and chords of $D_b$.
In particular a segment $P_iP_j$ belongs to $\lab$ if $P_i\in\bd D_b$ and $P_j\in\bd D_a$.
Again we will count these segments in couples (it will be clear later that in fact the number of these segments is always even).
\end{definition}
In an analogous way we define the corresponding classes of central angles.
\begin{definition}

The class $\Aa$ is the class of angles which determine a segment in $\laa$.

The class $\Ab$ is the class of angles which determine a segment in $\lbb$.

The class $\Aab$ is the class of angles which determine a segment in $\lab$.
\end{definition}
%
%
%
\begin{figure}[h]
\centering
\begin{tikzpicture}[x=0.5mm,y=0.5mm]
\draw (0,0) circle (40);
\draw (160:45) node [above] {\small$D_b$};
\draw (230:10) node [left] {\small$D_a$};
\draw (0,0) circle (20);
\draw[thick] (210:40)--(-30,5.77350269)--(3.29048,24.99377)--(20,8.284)--(20,-25)--(-95:40) arc (-95:-150:40);
\draw[very thin] (0,0)--(-10,17.3205) node[above]{\small$P_5$} ;
\draw[very thin] (0,0)--(3.29048,24.99377) node[above]{{\small$P_4$}};
\draw[very thin] (0,0)--(14.14,14.14)node[above right]{{\small{$P_3$}}};
\draw[fill=gray!40] (0,0)-- +(45:8) arc (45:82:8) -- cycle;
\draw (5,5) node [above] {$\scriptstyle\theta_1$};
\draw[very thin] (0,0)--(20,8.284) node[right]{\small$P_2$};
\draw[very thin] (0,0)--(20,0) node[right]{\small$P_1$};
\draw[fill=gray] (0,0) -- (8,0) arc (0:22:8)-- cycle;
\draw (13,2)  node {$\scriptstyle\theta_2$};
\draw (0,-50) node{(b)};
\begin{scope}[xshift=-150]
\draw (0,0) circle (40);
\draw (-25,-30) node [left] {\small$D_b$};
\draw (0,-10) node [left] {\small$D_a$};
\draw (0,0) circle (20);
\draw[thick] (-35,-10)--(-40,0) node[left]{\small$P_4$}--(3.29048,24.99377)--(20,8.284)--(20,-34.64101615)node[right]{\small$P_1$}-- (-5,-30)--(-35,-10);
\draw[very thin] (0,0)--(-40,0);
\draw[very thin] (0,0)--(-10,17.3205)node[above]{\small$P_3$};
\draw[fill=gray!15] (0,0) -- (180:8) arc (180:120:8)-- cycle;
\draw (-12,0) node [above] {$\scriptstyle\xi_0$};
\draw[very thin] (0,0)--(20,0) node[right]{\small$P_2$};
\draw[very thin] (0,0)--(20,-34.64101615);
\draw (0,-50) node{(a)};
\end{scope}
\begin{scope}[xshift=155, rotate=100]
\draw (0,0) circle (40);
\draw (100:35) node {\small$D_b$};
\draw (0,0) circle (20);
\draw (10,10) node {\small$D_a$};
\draw[thick] (25,-17)--(20,-34.64101615)--(-10.352761804,-38.637033052)--(-40,0)--(114:20) arc (114:15:20) -- (25,-17); 
\draw[thin] (0,0)--(-40,0) node[below right]{\small$P_1$} ;
\draw[thin] (0,0)--(-10.352761804,-38.637033052) node[right]{\small$P_2$};
\draw[thin] (0,0)--(20,-34.64101615) node[right]{\small$P_3$};
\draw[very thin] (0,0)--(-25.175,-19.32) node[below]{\small$Q_1$};
\draw[fill=gray] (0,0) -- (180:8) arc (180:218:8)-- cycle;
\draw (-13,-4)  node {$\scriptstyle\eta_1$};
\draw[very thin] (0,0)--(4.82365,-36.639) node[right]{\small$Q_2$};
\draw[fill=gray!50] (0,0)--(278:8) arc (270:292:8) -- cycle;
\draw (4,-8)  node[right] {$\scriptstyle\eta_2$};
\end{scope}
\draw (110,-50) node{(c)};
\end{tikzpicture}
\caption{The classes of segments $\lab$, $\laa$, $\lbb$ and the corresponding classes of angles $\Aab$, $\Aa$, $\Ab$.}\label{classesL}
\end{figure}
\begin{remark}
Figure \ref{classesL}, (a), represents elements $\xi_0$ in  $\Aab$
and the corresponding segments $P_1P_2\equiv P_3P_4$ in $\lab$; in
particular each couple of segments and angles are identified, so
that in the example it holds $|\Aab|=|\lab|=1$.

Figure \ref{classesL}, (b), represents elements $\theta_i$ in  the
class $\Aa$ and the corresponding segments $P_1P_2\equiv P_2P_3,
P_3P_4\equiv P_4P_5$ in the class $\laa$; in the example it holds
$|\Aa|=|\laa|=2$.

Figure \ref{classesL}, (c), represents elements $\eta_j$ in the
class $\Ab$ and the corresponding segments $P_kQ_k$ in the class
$\lbb$; as each couple of segments $P_iQ_i,Q_iP_{i+1}$ is
identified, in the example it holds $|\Ab|=|\lbb|=2$.
\end{remark}
Notice that all the segments in the class $\lab$ have the same
length equal to $\sqrt{b^2-a^2}$ and analogously each angle
$\xi_0\in\Aab$ has the same value:
\begin{equation}\label{xi0}
\sin\xi_0=\frac{\sqrt{b^2-a^2}}b, \qquad \cos\xi_0=\frac ab.
\end{equation}
Moreover for every $L_i\in\laa$ there exists $\theta_i\in\Aa$ such that $L_i=a\tan\theta_i$ with $\theta_i<\xi_0$, while for $L_j\in\lbb$ there exists $\eta_j\in\Ab$ such that $L_j=b\sin\eta_j$ and $\eta_j<\xi_0$.

By construction it always holds
$$
0<\theta_i,\eta_j < \xi_0 <\frac {\pi}2,
$$
moreover by convexity $\sum_{x\in\Aa\cup\Ab\cup\Aab}x\le\pi$ and $\sum_{l\in \lab\cup\laa\cup\lbb}l\le P(\oo)/2$.
More precisely for an optimal polygon $\oo$, equality holds in the previous expressions, as shown in the following crucial theorem.
\begin{theorem}\label{teolaalablb}
Let $\ola$ be a solution to (\ref{PB}) then its boundary can be decomposed into unions of arches of $\bd D_a$ and $\bd D_b$ and segments $L_i$ belonging to
$\lab\cup\laa\cup\lbb$.
\end{theorem}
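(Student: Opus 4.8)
The plan is to combine the two results already established in this section. From Theorem~\ref{teopolyg} we know that for $1/2b<\la<2/a$ the boundary $\bd\ola$ contains no arcs of $\bd D_a$ or $\bd D_b$; combined with Theorem~\ref{teoJimmy}, which says $\ola$ is locally a polygon in the interior of the annulus, this forces $\bd\ola$ to be a finite or countable union of segments, each of which is either a chord of $D_b$, or a tangent line to $D_a$, or a ``free'' side touching neither circle. (For the boundary values $\la\le 1/2b$ or $\la\ge 2/a$ the statement is vacuous or trivial, the set being $D_b$ or $D_a$; the nontrivial content is the polygonal range.) The first step is therefore to invoke the parallel chord movement argument described in the introduction (Figure~\ref{pcmov}): an optimal polygon cannot contain two consecutive free sides $\overline{AB}$, $\overline{BC}$, since sliding the lines between $A$ and $C$ in the direction $v=\overrightarrow{AC}$ preserves the area while strictly increasing the perimeter for a suitable sign of $t$, contradicting optimality. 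Hence free sides are isolated: between any two of them lies at least one chord of $D_b$ or one tangent side to $D_a$.

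The second, and main, step is to upgrade ``isolated'' to ``absent'': I must show an optimal polygon has \emph{no} free side at all. Suppose $\overline{AB}$ is a free side, so $A,B\in\inte(D_b)\setminus D_a$ and the side lies strictly inside the annulus. The idea is a local perturbation that decreases $\jla{\la}$. Consider translating the line containing $\overline{AB}$ outward (away from the origin) by a small parameter $s$, keeping all other sides' lines fixed; this replaces $A,B$ by new vertices $A_s,B_s$ on the two adjacent lines, and for small $s>0$ the set stays in $\C$ because $\overline{AB}$ was strictly inside $D_b$ and the two neighbouring sides were not anti-parallel. Under this move both $|\ola|$ and $P(\ola)$ change, and a direct first-order computation (using that the two adjacent edges make angles $\neq 0,\pi$ with $\overline{AB}$) shows $\frac{d}{ds}\jla{\la}(\ola_s)=\la\,\ell(s) - (\cot\alpha_1+\cot\alpha_2)$ where $\ell(s)$ is the current length of the moved side and $\alpha_1,\alpha_2$ are the interior half-angles at $A,B$; one then checks this derivative has a definite sign (or, if it vanishes, pushes to second order), so the side can be moved to touch one of the circles, strictly lowering $\jla{\la}$ — contradicting optimality. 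Together with the finiteness of the number of sides (Corollary~\ref{Nfinite}, which may be cited here), this shows every side of $\ola$ is a chord of $D_b$ or a tangent segment to $D_a$, i.e. belongs to $\lbb$, $\laa$ or $\lab$ after the half-chord/identification conventions of the Definitions above, while the portions of $\bd\ola$ lying on the circles are arcs of $\bd D_a$ or $\bd D_b$.

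The expected main obstacle is the perturbation analysis of a single free side: unlike the parallel chord movement (which is global and keeps area fixed automatically), translating one line changes both functionals, so one must verify the sign of the first variation $\la\,\ell-(\cot\alpha_1+\cot\alpha_2)$ and handle the borderline case where it is zero by examining the second variation, as well as check carefully that $\ola_s$ remains convex and inside $D_b$ for small $s$ (this is where the fact that $\overline{AB}$ is strictly interior to the annulus is used). A clean alternative, which I would actually prefer to write up, is to avoid case analysis altogether: observe that if $\ola$ has a free side then, by the already-proven absence of arcs and the local polygon structure, there are finitely many free sides separated by chords/tangents; apply the parallel chord movement not to two consecutive free sides but across a single free side $\overline{AB}$ together with a sliver of its neighbours, sliding the free line toward the boundary until it first meets $D_b$ or becomes tangent to $D_a$ — during this motion one can arrange $|\ola|$ to be nondecreasing and $P(\ola)$ to strictly increase, or invoke continuity of $\jla{\la}$ on $\C$ together with the fact that the new configuration strictly beats the old one by the explicit formulas~\eqref{arch-chordb} and the Step~2 identity already derived. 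Either route yields the decomposition claimed, completing the proof.
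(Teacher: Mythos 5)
Your reduction to showing that an optimal set has no free side is the right target, and your observation that a two-sided first variation forces the derivative to vanish is correct. But the proof has a genuine gap exactly where the paper puts all its effort: the borderline case in which the first variation is zero. You write that one ``checks this derivative has a definite sign (or, if it vanishes, pushes to second order)'', but the first-order condition obtained from perturbing in both directions is precisely that the derivative \emph{does} vanish at an optimum, so the degenerate case is not exceptional --- it is the only case left to rule out --- and you never carry out the second-order analysis. Worse, for the perturbation you chose (parallel outward translation of the free side) the second variation of $\jla{\la}$ along the path is governed by $\la\,\ell'(s)=\la(\cot\alpha_1+\cot\alpha_2)$, whose sign depends on the adjacent angles; when it is nonnegative the functional is convex along your one-parameter family, the critical point is a local \emph{minimum} of that restriction, and no contradiction follows. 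Your ``clean alternative'' (sliding the free line to the boundary while arranging the area to be nondecreasing and the perimeter to strictly increase) is asserted without any justification and does not follow from (\ref{arch-chordb}) or the Step 2 identity, which concern arcs of $D_b$ and $D_a$, not free segments.

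The paper avoids this trap by using a different perturbation: it moves the vertex $Q$ of the free side $PQ$ along the line of the \emph{adjacent} side $HQ$. The two-sided first-order analysis pins down $\la=2\tan(\eta/2)/\overline{QP}$, and then --- this is the essential step --- an exact (non-infinitesimal) computation of $\jla{\bar\la}(\oola{\bar\la}^\ep)-\jla{\bar\la}(\oola{\bar\la})$ in (\ref{QM}) shows the difference is strictly negative for every $\ep>0$, via the strict triangle inequality $\overline{QP}-\cos\eta\,\overline{QQ^\ep}<\sqrt{\overline{QP}^2+\overline{QQ^\ep}^2-2\cos\eta\,\overline{QP}\,\overline{QQ^\ep}}$. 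This gives strict concavity of the functional along that particular path, which is what kills the degenerate case. To repair your argument you would either need to adopt a perturbation with this property or actually compute and sign the second variation for yours; the preliminary ``free sides are isolated'' step via parallel chord movement, while correct, is not needed once a single free side is excluded outright.
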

Thanks to this result an optimal polygon $\oo$ can be characterized by its classes of segments $\lab,\laa,\lbb$ or, analogously, by its classes of central angles $\Aab,\Aa,\Ab$.
In particular by construction it turns out that if $\bd\oo$ is composed only by arcs of $D_a$ and $D_b$ and segments in the classes $\lab,\laa,\lbb$, then the number of segments which have one vertex on $\bd D_b$ and the other one on $\bd D_a$ (that is the segments which identify the class $\lab$), is even and hence we are allowed to identify segments of the type $\sqrt{b^2-a^2}$ in couple.

\begin{definition}

We define the class $\CC$ as the class of sets $\oo$ such that $D_a\subseteq \oo\subseteq D_b$ and $\bd\oo=\cup_{i\in I}L_i$, with $L_i\in\lab\cup\laa\cup\lbb$.
\end{definition}
Hence, for every $\oo\in\CC$, the functional $\jla{\la}(\oo)$ can be expressed as:
\begin{eqnarray}\label{Jla}
\jla{\la}(\oo)&=&\la\Big( \sum_{\xi_0\in\Aab} a^2\tan\xi_0 +a^2\,\sum_{\theta_i\in\Aa}\tan\theta_i+b^2\,\sum_{\eta_j\in\Ab}\sin\eta_j\cos\eta_j \Big) \\
        &&- 2\Big( \sum_{\xi_0\in\Aab}a\tan\xi_0 +a\, \sum_{\theta_i\in\Aa}\tan\theta_i+b\,\sum_{\eta_j\in\Ab}\sin\eta_j \Big). \nonumber
\end{eqnarray}
Notice that $\CC\subseteq\C$, that is each $\oo$ in the class $\CC$ is a convex polygon.
Hence by Corollary \ref{corollaryPolyg} and Theorem \ref{teolaalablb} it follows
$$
\min_{\oo\in\C}\jla{\la}(\oo)=\min_{\oo\in\CC}\jla{\la}(\oo),
$$
for every $1/2b<\la< 2/a$.
In particular for such values of $\la$ the minimum problem can be expressed as:
\begin{eqnarray}\label{JlaCC}
&& \min_{\oo\in\CC}\jla{\la}(\oo) = \nonumber\\
&&\qquad \min\left\{\jla{\la}(\oo)\ |\ \oo\in\CC;\ \sum_{\xi_0\in\Aab}\xi_0+\sum_{\theta_i\in\Aa}\theta_i+\sum_{\eta_j\in\Ab}\eta_j=\pi;\quad 0<\theta_i, \eta_j<\xi_0 \right\}.
\end{eqnarray}

Notice that the classes $\Aab,\Aa,\Ab$ do not identify a unique shape of polygon, as shown in Figure \ref{memeangles}.
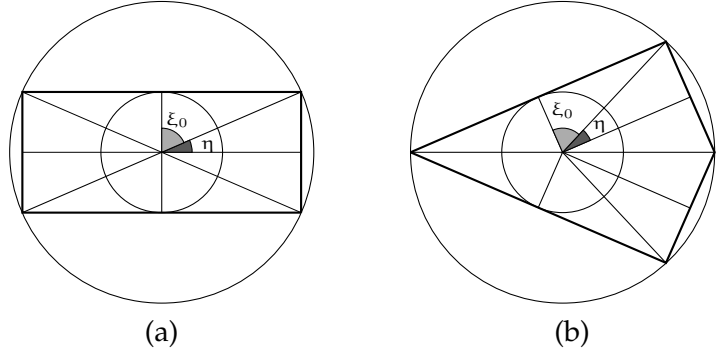
\begin{figure}[h]
\centering
\begin{tikzpicture}[x=0.4mm,y=0.4mm]
\draw (0,0) circle (50);
\draw (0,0) circle (20);
\draw[thick] (-23.58:50)--(23.58:50)--(156.42:50)--(203.58:50)--(-23.58:50);
\draw[very thin] (-23.58:50)--(156.42:50); \draw[thin] (23.58:50)--(203.58:50);
\draw[very thin] (-45.82,0)--(45.82,0);\draw[thin] (0,-20)--(0,20);
\draw[fill=gray!130] (0,0) -- (0:10) arc (0:23.58:10)-- cycle;
\draw (11:10)  node[right] {\tiny$\eta$};
\draw[fill=gray!65] (0,0) -- (90:8) arc (90:23.58:8)-- cycle;
\draw (65:12) node{\tiny$\xi_0$};
\draw (0,-60) node{(a)};
\begin{scope}[xshift=150, rotate=90]
\draw (0,0) circle (50);
\draw (0,0) circle (20);
\draw[thick] (90:50)--(222.84:50)--(270:50)--(317.16:50)--(90:50);
\draw[thin] (90:50)--(270:50); \draw[thin] (0,0)--(222.84:50);\draw[thin] (0,0)--(317.16:50);
\draw[thin] (0,0)--(246.23:46);\draw[thin] (0,0)--(293.58:46);
\draw[thin] (0,0)--(156.42:20);\draw[thin] (0,0)--(23.58:20);
\draw[fill=gray!130] (0,0) -- (317.16:10) arc (317.16:293.58:10)-- cycle;
\draw (305:15)  node {\tiny$\eta$};
\draw[fill=gray!65] (0,0) -- (-42.84:8) arc (-42.84:23.58:8)-- cycle;
\draw (0:14)  node {\tiny$\xi_0$};
\end{scope}
\draw (135,-60) node{(b)};
\end{tikzpicture}
\caption{Two different polygons corresponding to the same classes of central angles.
For them the value of the functional $\jla{\la}$ is the same}\label{memeangles}
\end{figure}
However the value of $\jla{\la}$ only depends on the values of the angles and their belonging to a certain class; indeed these possible different polygons are equivalent for the minimization problem. 
Hence in what follows we will refer to a certain polygon $\oo$ regarding only its classes of central angles (or equivalently its classes of segments).

\begin{proof}[Proof of Theorem \ref{teolaalablb}]
Thanks to Theorem \ref{teoJimmy} it is enough to prove that each segment of $\bd\ola$ belongs to $\lab\cup\laa\cup\lbb$.
Assume there exists a side $PQ$ which is neither tangent to $D_a$ nor a chord of $D_b$ with $Q\in\inte D_b\setminus \overline{D_a}$.
We define the point $H\in\bd \ola$ such that $HQ\in\bd\ola$ and $OH\perp HQ$, as shown in Figure \ref{oep}.
Let $\eta$ be the angle determined by the normal lines to $HQ$ and $QP$, respectively.
\begin{center}
\begin{figure}[h]
\begin{tikzpicture}[x=0.4mm,y=0.4mm]
\draw[very thin] (0,0) circle (67);\draw (-60,-30) node [right] {\small$D_b$};
\draw[very thin] (0,0) circle (20);\draw (-20,0) node [left] {\small$D_a$};
\draw (0,0) node[left]{$O$};
\draw (0,40) node [above] {\small$H$};
\draw[thick] (-30,40) -- (40,40);   
\draw (40,40) node [above] {\small$Q$};
\draw[thick] (40,40) -- (20,-30) node[right] {\small$P$}; 
\draw[red] (20,-30)--(250:20);\draw[thick] (20,-30)--(-20,-13);
\draw (-30, 40) node [above]{\small$\ola$};
\draw (-30,40) -- (50,40);
\draw (30,40)  -- (20,-30); 
\draw (50,40)  -- (20,-30); 
\draw[<->,very thin] (30,38) -- (39, 38);  
\draw (33,40) node [below] {$\scriptstyle\ep'$};
\draw[<->,very thin] (40,38) -- (49, 38);
\draw (43,40) node [below] {$\scriptstyle\ep''$};
\draw (20,27) node {\small$\oola{\la}^{\scriptscriptstyle\ep'}$};
\draw (52,18) node {\small$\oola{\la}^{\scriptscriptstyle\ep''}$};
\draw[very thin] (0,0) -- (0,40);
\draw[very thin] (0,0) -- (26.41509434,-7.547169811);
\draw (12,12) node [below] {$\eta$};
\draw[fill=gray!60, very thin] (0,0) -- (-18:9) arc (-18:90:9)-- cycle;
\draw[very thin] (0,0)--(250:20);
\end{tikzpicture}
\caption{Segments of optimal polygons necessarily belong to $\lab\cup\laa\cup\lbb$.}\label{oep}
\end{figure}
\end{center}
We consider $\oola{\la}^\ep$ a perturbation of $\ola$ obtained slightly  moving the vertex $Q$ in a position $Q^\ep$, which belongs to the same line $HQ$ and which is at distance $\ep$ from $Q$ (see Figure \ref{oep}). 

In the case of a perturbation with positive $\ep$, we have
$$
\jla{\la}(\oola{\la}^\ep)-\jla{\la}(\ola)={\ep}\, \sin\eta\;\Big(\frac{\la}2\;\overline{QP}-\frac{1-\cos\eta}{\sin\eta}+\frac{o(\ep)}{\ep}\Big),
$$
which implies, by the optimality of $\ola$,
\begin{equation}\label{ep>0}
\frac{\la}2\ge\frac{\tan{\eta/2}}{\overline{QP}}.
\end{equation}
In an analogous way, for $\ep<0$ we get
$$
\jla{\la}(\oola{\la}^\ep)-\jla{\la}(\ola)={-\ep}\,\sin\eta\;(\frac{\la}2\;\overline{QP}-\frac{1-\cos\eta}{\sin\eta}+\frac{o(\ep)}{\ep}),
$$
which entails
$$
\frac{\la}2\le\frac{\tan{\eta/2}}{\overline{QP}},
$$
and hence, by condition (\ref{ep>0}) we get, as a necessary condition for the optimality of $\ola$,
$$
\la= 2\, \frac{\tan{\eta/2}}{\overline{QP}}.
$$
Let us now show that, even in this case, such a set $\ola$ cannot be a minimizer.

Fix $\bar\la= 2\; \tan\textstyle{\frac{\eta}2}\,/\;\overline{QP}$. We consider the
same perturbation as before, for $\ep>0$ and again we assume $\ep$
small enough in such a way that $\oola{\bar\la}^\ep$ still belongs
to $\C$. We compute
$\jla{\bar\la}({\oola{\bar\la}^\ep})-\jla{\bar\la}(\oola{\bar\la})$
in order to show that
$\jla{\bar\la}({\oola{\bar\la}^\ep})<\jla{\bar\la}(\oola{\bar\la})$,
and hence that $\oola{\bar\la}$ cannot be a minimizer.
\begin{eqnarray}
\jla{\bar\la}({\oola{\bar\la}^\ep})-\jla{\bar\la}(\oola{\bar\la}) &=&
\sin\eta\;\frac{1-\cos\eta}{\sin\eta\;\overline{QP}}\, \overline{QQ^\ep}\,\overline{QP}-\overline{QQ^\ep}-\overline{Q^\ep P}+\overline{QP}\nonumber\\
        &=& \overline{QP}-\cos\eta\overline{QQ^\ep}-\sqrt{\overline{QP}^2+\overline{QQ^\ep}^2-2\cos\eta\,\overline{QP}\,\overline{QQ^\ep}},\label{QM}
\end{eqnarray}
notice that the quantity (\ref{QM}) is always negative for every
positive $\ep$  as, if $\overline{QP}-\cos\eta\,\overline{QQ^\ep}$
is non negative, it holds
\begin{eqnarray*}
\overline{QP}-\cos\eta\,\overline{QQ^\ep} &=& \sqrt{\overline{QP}^2-2\cos\eta\,\overline{QP}\,\overline{QQ^\ep}+\cos^2\eta\,\overline{QQ^\ep}}\\
        &<& \sqrt{\overline{QP}^2+\overline{QQ^\ep}^2-2\cos\eta\,\overline{QP}\,\overline{QQ^\ep}}.
\end{eqnarray*}
\end{proof}
\begin{remark}
Notice that, as highlighted in the introduction about the proof of Theorem \ref{teopolyg}, the perturbations considered in the
above proof are not of the linear form
$$
h_{\oo_t}(\theta)=h_\oo(\theta)+tv(\theta).
$$
This allows us to get more information about the optimal domains.
\end{remark}
As already noticed, the class $\Aab$ is composed by copies of the same angle $\xi_0$ which depends only on the data $a,b$: $\cos\xi_0=a/b$. 
Hence $\Aab$ has at most $\pi/\xi_0$ elements which in particular implies that it is finite. 
Regarding $\Aa$ and $\Ab$ the following theorem holds which implies in  particular that $\Aa$ and $\Ab$ are also finite sets (see Corollary \ref{Nfinite}).
\begin{theorem}\label{lemmaAaAb}
Let $\ola$ be an optimal set belonging to the class $\CC$ then
\begin{enumerate}
\item\label{thetatheta} for $\la\neq 2/a$ there exists $\theta\in (0,{\textstyle\frac{\pi}2})$ such that if $\Aa$ is not empty, then $\Aa=\{\theta\}$;
\item\label{xxy} there exist $x,y\in (0,{\textstyle\frac{\pi}2})$ such that if $\Ab$ is not empty, then either it is a singleton or $\Ab=\{x,...,x\}$ or $\Ab=\{x,...,x,y\}$ with $x>y$ and $\cos x+\cos y= 1/b\la$.
\end{enumerate}
\end{theorem}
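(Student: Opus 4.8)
The plan is to treat Problem (\ref{JlaCC}) as a finite-dimensional constrained optimization in the central angles and to read the structure off from first- and second-order optimality under the simplest perturbation. Put $f(\theta):=a(\la a-2)\tan\theta$ and $g(\eta):=\la b^2\sin\eta\cos\eta-2b\sin\eta$; then (\ref{Jla}) reads
\[
\jla{\la}(\oo)=c+\sum_{\theta_i\in\Aa}f(\theta_i)+\sum_{\eta_j\in\Ab}g(\eta_j),
\]
where $c$ depends only on $|\Aab|$ (every angle of $\Aab$ being the fixed $\xi_0$), and the angles interact only through the linear constraint $\sum_{\Aab}\xi_0+\sum_{\Aa}\theta_i+\sum_{\Ab}\eta_j=\pi$. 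The basic move I would use is: choose two angles of the \emph{same} class, say $\theta_i\ne\theta_j$ in $\Aa$ (or two elements of $\Ab$), and replace them by $\theta_i+s$, $\theta_j-s$, all other angles fixed. Since $0<\theta_i,\theta_j<\xi_0$, for small $|s|$ this is again a feasible configuration of (\ref{JlaCC}) (the same cyclic pattern of side-types, two sides/vertices resized) with the same value of the constraint; by optimality of $\ola$, the value of $\jla{\la}$ along this perturbation, as a function of $s$, has a minimum at $s=0$. This forces $f'(\theta_i)=f'(\theta_j)$, respectively $g'(\eta_i)=g'(\eta_j)$, and, taking $\eta_i=\eta_j=x$, also $g''(x)\ge 0$ whenever a value $x$ occurs at least twice in $\Ab$.

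For item (\ref{thetatheta}) this already suffices: $f'(\theta)=a(\la a-2)\sec^2\theta$, and since $\la\ne 2/a$ the factor $a(\la a-2)$ is nonzero while $\theta\mapsto\sec^2\theta$ is strictly increasing on $(0,\tfrac\pi2)$; hence $f'$ is injective there and $f'(\theta_i)=f'(\theta_j)$ yields $\theta_i=\theta_j$. So all elements of $\Aa$ are equal, i.e. $\Aa=\{\theta\}$ for a single $\theta\in(0,\tfrac\pi2)$.

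For item (\ref{xxy}) I would rewrite $g'(\eta)=\la b^2\cos 2\eta-2b\cos\eta=2\la b^2\cos^2\eta-2b\cos\eta-\la b^2$, a quadratic in $\cos\eta$ with vertex at $\cos\eta=\tfrac1{2\la b}$ (and $\tfrac1{2\la b}\in(0,1)$, since $\la>\tfrac1{2b}$). Thus for distinct $\eta_i,\eta_j\in\Ab$ the equality $g'(\eta_i)=g'(\eta_j)$ is equivalent to $\cos\eta_i+\cos\eta_j=\tfrac1{\la b}$. Three pairwise distinct angles cannot satisfy this for all three of their pairs (subtracting two such relations would equate two of the cosines), so $\Ab$ takes at most two values. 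If a value $x$ occurs at least twice then $g''(x)=2b\sin x\,(1-2\la b\cos x)\ge 0$, i.e. $\cos x\le\tfrac1{2\la b}$; a second repeated value $y\ne x$ would likewise satisfy $\cos y\le\tfrac1{2\la b}$, so $\cos x+\cos y\le\tfrac1{\la b}$ with equality only if $x=y$ — impossible. Hence at most one value is repeated; calling it $x$, the relation for the pair $(x,y)$ gives $\cos y=\tfrac1{\la b}-\cos x\ge\tfrac1{2\la b}>\cos x$ (the last inequality being strict since $x\ne y$), so $y<x$. Together with the cases ``$\Ab$ a singleton'' and ``all angles equal'', this is exactly the claimed trichotomy, and $\cos x+\cos y=1/(b\la)$ in the mixed case.

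The derivative computations and the reduction to a quadratic in $\cos\eta$ are routine; the real obstacle is item (\ref{xxy}). On $(0,\xi_0)$ the function $g$ is neither convex nor concave, so one cannot simply conclude that all the $\eta_j$ coincide: the precise two-scale form has to be extracted by pairing the first-order relation $\cos\eta_i+\cos\eta_j=1/(\la b)$ (valid for distinct pairs) with the second-order sign condition $g''(x)\ge 0$ for repeated values, which together single out the value that may repeat and force $x>y$. A secondary point to check carefully is that the perturbed angle configurations really are members of $\CC$ — but this is immediate, since by the discussion after (\ref{JlaCC}) membership in $\CC$ and the value of $\jla{\la}$ depend only on the multiset of central angles (subject to $\sum x=\pi$, each angle in $(0,\xi_0)$).
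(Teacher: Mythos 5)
Your treatment of item \textbf{2} is sound and follows essentially the same route as the paper: the pairwise first-order relation $\cos\eta_i+\cos\eta_j=1/(\lambda b)$ for distinct angles, combined with the second-order sign condition $g''(x)\ge 0$ for a repeated value, is exactly the paper's argument (the paper phrases it via KKT multipliers and the Hessian restricted to the critical cone, you phrase it via an explicit two-angle transfer perturbation, but the content is identical).

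Item \textbf{1}, however, has a genuine gap. The classes $\Aa$, $\Ab$ are multisets (the paper writes $\Ab=\{x,\dots,x\}$ and distinguishes this from ``singleton''), and the conclusion $\Aa=\{\theta\}$ means $|\Aa|=1$: there is \emph{at most one} tangent-tent angle, not merely that all such angles are equal. This stronger statement is what is actually used later (e.g.\ in the proof of Lemma \ref{Lavuoto}, where ``there exists at most one angle in the class $\Aa$'' justifies placing the endpoints $R,P$ on $\bd D_b$; see also Remark \ref{rmkoptimality}). Your first-order argument, $f'$ injective on $(0,\pi/2)$ for $\lambda\ne 2/a$, only shows that all elements of $\Aa$ coincide; it does not exclude two or more copies of the same $\theta$. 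The paper rules this out with the second-order condition: for a transfer between two equal angles $\theta$ the relevant quadratic form is $2a^2(\lambda-2/a)\,\sin\theta/\cos^3\theta$ times a positive quantity, which is strictly negative for $\lambda<2/a$ (the only regime in which an optimal set lies in $\CC$ at all), contradicting optimality. The fix is available inside your own framework --- you already derived ``$g''(x)\ge 0$ whenever a value occurs at least twice'' for $\Ab$; the same perturbation applied to two copies of $\theta\in\Aa$ gives $f''(\theta)\ge 0$, i.e.\ $a(\lambda a-2)\sec^2\theta\tan\theta\ge 0$, which fails for $\lambda<2/a$ --- but as written your proof of item \textbf{1} stops short of this and therefore does not establish the claim.
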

\begin{remark}\label{la=2/a}
In the case $\la=2/a$ the boundary of an optimal set $\ola$  only contains arcs of $D_a$ or segments tangent to $D_a$ as it follows by Theorem \ref{teoJimmy}, Step 1 in Theorem \ref{teopolyg}, Theorem \ref{teolaalablb} and Lemma \ref{Lbvuoto}. 
Hence, for $\la=2/a$, either $\ola=D_a$ or $\ola$ is a circumscribed figure to $D_a$ which possibly has both tangent segments and arcs. 
Indeed for a polygons $\oo$ circumscribed to $D_a$ we have $|\oo|=P(\oo) a/2$ hence $\jla{\scriptstyle{\frac2a}}(\oo)=0$; more generally the same arrives if $\oo$ is circumscribed to $D_a$ and it contains arcs of $D_a$. Hence either $\Aa$ is empty or
$\Aa=\{\theta_1,...,\theta_m\}$ for some $m$ such that
$\sum_{i=1}^m\theta_i\le\pi$ and $\cos\theta_i>a/b$.
\end{remark}
\begin{proof}[Proof of Theorem \ref{lemmaAaAb}]
We analyze first and second order optimality conditions for Problem (\ref{JlaCC}). 
By the formulation (\ref{Jla}) the functional $\jla{\la}$ can in  fact be considered as a function of the angles $\xi_0\in\Aab, \theta_i\in\Aa, \eta_j\in\Ab$.
As their sum is finite and each $\theta_i,\eta_j$ is positive, the sets $\Aa$ and $\Ab$ have at most countably many elements, while $\Aab$ is finite.

Consider $\ola$  and assume $\Aab=\{\xi_0,...,\xi_0\}$ with $|\Aab|=p$,  $\Aa=\{\theta_1,...,\theta_i,...\}$ with $|\Aa|=q_a$, $\Ab=\{\eta_1,...,\eta_j,...\}$ with $|\Ab|=q_b$; let $N=p+q_a+q_b$, possibly infinity. 
Let us indicate by $X\in\R^N$ the sequence of angles
$$
{X}=(\xi_0,...,\xi_0,\theta_1,...,\theta_i,... ,\eta_1,...,\eta_j,...)=(x_k)_{k=1,...,N},
$$
and let $\bar{X}$ be the vector corresponding to the optimal set $\ola$. 
With abuse of notation we write $\jla{\la}(X)$ meaning $\jla{\la}(\oo)$,  where $\oo$ is the set corresponding to $X$. 
As $\oo\in\CC$, $\jla{\la}(\oo)$ can be expressed in the form (\ref{Jla}), under the constraints in (\ref{JlaCC}), namely
$$
g_k(X)=x_k-\xi_0<0\qquad\text{ and }\qquad h(X)=\sum_{i=1}^N x_i -\pi=0.
$$
By the first order optimality conditions there exist Lagrange multipliers $\mu_0\in\R$, $\mu_k\in\R^+$ for $k=1,...,p$ such that
\begin{equation}\label{1optimality}
\begin{cases}
D\jla{\la}(\bar{X}) = \mu_0 Dh(\bar{X})+\sum_{k=1}^{p}\mu_kDg_k(\bar{X}),\\
\sum_{k=1}^{P} \mu_kg_k(\bar{X})=0;
\end{cases}
\end{equation}
this is equivalent to
\begin{equation}\label{1ordine}
\begin{cases}
b^2(\la -\frac{2}a)=\mu_0+\mu_k                   &\qquad\text{for } k=1,...,p \\
a^2(\la-\frac 2a)\frac 1{\cos^2\theta_i} = \mu_0  &\qquad\text{for every }\theta_i\in\Aa \\
\la b^2\cos2\eta_j -2b\cos\eta_j=\mu_0            &\qquad\text{for every }\eta_j\in\Ab.
\end{cases}
\end{equation}
From the second condition in (\ref{1ordine}) it easily follows
$\theta_i=\theta_j$, $i,j=1,...,q_a$, and hence if $\Aa$ is not
empty then it contains only copies of the same angle $\theta$ and
hence $\Aa$ is finite.

Let us consider the third condition in (\ref{1ordine}); for $\eta_i,\eta_j\in\Ab$ it holds
$$
\la\, b (\cos\eta_i-\cos\eta_j)(\cos\eta_i+\cos\eta_j)=\cos\eta_i-\cos\eta_j,
$$
which implies either $\eta_i=\eta_j$ or $\eta_i\neq\eta_j$ with
\begin{equation}\label{cosxcosy}
\cos\eta_i + \cos\eta_j = \frac 1{b\la}.
\end{equation}
Hence $\Ab$ contains at most two different angles; let us call them $x,y$ and assume $x>y$.
This implies that also $\Ab$ is a finite set.

By the second order optimality conditions we have that  for every
$d\in\rn$ which belongs to the critical cone associated to
$\bar{X}$, that is such that $d$ verifies
\begin{equation}\label{criticalcone}
\begin{cases}
\langle D\jla{\la}(\bar X); d \rangle &\le 0,\\
\langle Dg_k(\bar X); d\rangle &\le 0, \qquad\text{ for }k=1,...,p\\
\langle Dh(\bar X); d\rangle &=0,
\end{cases}
\end{equation}
it holds
\begin{equation}\label{D2jla>0}
\langle D^2\jla{\la}(\bar X) d, d \rangle \ge 0,
\end{equation}
where $D^2\jla{\la}$ is the diagonal matrix
\begin{equation}\label{D2jla}
[D^2\jla{\la}(X)]_{ii}=
\begin{cases}
2\frac{b^2}{a^2}\sqrt{b^2-a^2}\, (a\,\la-2)  &\qquad\text{if }i=1,...,p\\
2a(a\,\la-2)\, \frac{\sin\theta}{\cos^3\theta}&\qquad\text{if }i=p+1,...,p+q_a\\
2b(-b\,\la\sin 2\eta_j + \sin\eta_j)        &\qquad\text{if }i=N-q_b+1,...,N.\\
\end{cases}
\end{equation}

Assume $q_a=|\Aa|\ge 2$ and let $d$ be a vector in the critical
cone with $d_i=0$ if $i=1,...,p, N-q_b+1,...,N$ (that is $d$ has non
null components only corresponding to the elements of the class
$\Aa$). Hence
$$
\langle D^2\jla{\la}(\bar X)d;d \rangle=2a^2\left(\la-\frac 2a\right)\frac{\sin\theta}{\cos^3\theta}\sum_{i=p+1}^{p+q_a} d_i^2,
$$
which is negative and hence contradicts (\ref{D2jla>0}).
This proves {\bf\ref{thetatheta}}.

Assume there exist $\eta_j=\eta_k=z\in\Ab$ and consider $d\in\rn$ such that $d_j=-d_k$ and $d_i=0$ for $i\neq j,k$.
Hence $d$ belongs to the critical cone (\ref{criticalcone}) and hence (\ref{D2jla>0}) holds, that is
$$
2b d_j^2\sin z(1-2b\,\la\cos z)\ge 0,
$$
which entails $\cos z\le 1/2b\la$.
Analogously, assume $\eta_j=y$, $\eta_k=x$ with $\cos x+\cos y=1/b\la$  by (\ref{cosxcosy}); consider the same $d$ as before.
Condition (\ref{D2jla>0}) gives
$$
2b\,d_k^2(\sin y-\sin x)(1-2b\,\la\cos y)\ge 0,
$$
which implies that if $x>y$ then $\cos y \ge 1/2b\la$ (and hence by (\ref{cosxcosy}) $\cos x\le 1/2b\la$).

Assume $\Ab$ contains the set $\{x,y,y\}$ with $x>y$; then it holds $\cos y= 1/2b\la$ which implies $x=y$ by (\ref{cosxcosy}).
Hence the thesis holds true.
\end{proof}
\begin{remark}\label{rmkoptimality}
The Hessian matrix $D^2\jla{\la}$ is the diagonal matrix given in (\ref{D2jla}).
Since the critical (tangent) cone is here an hyperplane, three situations can occur:
\begin{itemize}
  \item all the eigenvalues of $D^2 \jla{\la}$ are non negative and the second order optimality condition is automatically fulfilled;
  \item there exist \emph{at least} two negative eigenvalues and the quadratic form cannot be non negative on a hyperplane, thus the second order
  optimality condition is not satisfied;
  \item there exists one and only one negative eigenvalue.
 In this case, as explained in \cite[Corollary 4.6]{HPR}, the quadratic
 form with eigenvalues $\la_1<0<\la_2\leq \la_3\leq \ldots\la_N $ will be non
 negative on the hyperplane $H=(x_1,x_2,\ldots x_N)^\bot$ if and only if
\begin{equation}\label{ineqHPR}
\sum_{i=1}^N \frac{x_i^2}{\lambda_i} \le 0.
\end{equation}
\end{itemize}

In our situation, to each angle $\theta\in \Aa$ or $y\in \Ab$ corresponds a negative eigenvalue of $D^2\jla{\la}$.
This is the reason why we cannot have more than one of such angles.
Moreover, as soon as one of these angles $\theta\in \Aa$ or $y\in \Ab$ exists, the inequality (\ref{ineqHPR}) gives an information which will be useful
in the sequel, see Section \ref{SecNumbersOfSides}.

As pointed out in {\bf \ref{xxy}.} of Theorem \ref{lemmaAaAb} if there exist two different angles $x>y$ then $\cos x\le 1/2\la b$.
More precisely this holds true also if the class $\Ab$ is composed only by copies of a same angle $x$.
Indeed if $\Ab\supseteq\{x,x\}$, the eigenvalue of $D^2\jla{\la}(\ola)$  associated to $x$ has to be non negative, that is $2b\sin x(1-2b\,\la\cos x)\ge 0$, which gives $\cos x \le 1/2\la b$.
\end{remark}

As already noticed in the proof of Theorem \ref{lemmaAaAb}, the following holds.
\begin{corollary}\label{Nfinite}
Let $\ola\in \CC$ be an optimal set such that $\bd\oo=\cup_{i\in I}L_i$, with $L_i\in\lab\cup\laa\cup\lbb$.
Then $I$ is a finite set of indices and hence for $1/2b<\la<2/a$ the set $\ola$ is a polygon.
\end{corollary}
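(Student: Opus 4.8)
The plan is to collect the finiteness statements already obtained piece by piece in the analysis of Problem~(\ref{JlaCC}) and assemble them. Recall that for an optimal $\ola\in\CC$ the boundary decomposes as $\bd\ola=\cup_{i\in I}L_i$ with each $L_i\in\lab\cup\laa\cup\lbb$, so $I$ is partitioned into the indices contributing to $\Aab$, to $\Aa$, and to $\Ab$. It therefore suffices to bound the cardinality of each of these three classes separately.

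First I would treat $\Aab$: by (\ref{xi0}) every element of $\Aab$ equals the fixed angle $\xi_0$ with $\cos\xi_0=a/b$, so $\xi_0>0$ is a constant depending only on $a,b$; since by convexity $\sum_{x\in\Aa\cup\Ab\cup\Aab}x\le\pi$, one gets $|\Aab|\le\pi/\xi_0<\infty$. Next, for $\la\neq 2/a$, part~\textbf{\ref{thetatheta}} of Theorem~\ref{lemmaAaAb} gives $\Aa=\{\theta\}$ or $\Aa=\emptyset$, hence $|\Aa|\le 1$; and part~\textbf{\ref{xxy}} of the same theorem says $\Ab$ consists of at most two distinct values $x>y$, but this alone does not yet bound the number of \emph{copies} of $x$. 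Here I would invoke the second order optimality argument already used in the proof of Theorem~\ref{lemmaAaAb}: if $\Ab\supseteq\{x,x\}$ then, testing (\ref{D2jla>0}) against $d$ supported on two copies of $x$, one finds the eigenvalue $2b\sin x(1-2b\la\cos x)$ must be $\ge 0$, i.e. $\cos x\le 1/2b\la$ (this is exactly the observation recorded in Remark~\ref{rmkoptimality}); in particular $x$ is bounded away from $0$ by a positive constant depending only on $b,\la$, so there can be only finitely many copies of $x$ before their sum exceeds $\pi$. Together with the single possible extra copy of $y$, this shows $|\Ab|<\infty$. Summing, $|I|=|\Aab|+|\Aa|+|\Ab|<\infty$, so $\ola$ has finitely many sides; being also a subset of $D_b\setminus$ (interior structure forces no arcs in the range $1/2b<\la<2/a$ by Theorem~\ref{teopolyg}), it is a genuine polygon. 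The case $\la=2/a$ is excluded from the polygon conclusion and is handled separately in Remark~\ref{la=2/a}.

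The only point requiring a little care — the ``main obstacle'' — is the bound on the multiplicity of the angle $x\in\Ab$, since Theorem~\ref{lemmaAaAb}\,\textbf{\ref{xxy}} a priori permits arbitrarily many copies of $x$; the resolution is that the first-order condition (\ref{1ordine}) together with the single-negative-eigenvalue discussion forces $\cos x\le 1/2b\la$, uniformly lower-bounding $x$ and hence, via $\sum x\le\pi$, bounding the count. Everything else is a direct quotation of results established earlier in this section, so no further computation is needed.
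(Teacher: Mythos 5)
Your proof is correct and follows essentially the same route as the paper, which deduces the corollary directly from the finiteness observations made inside the proof of Theorem \ref{lemmaAaAb} (copies of the single fixed angle $\xi_0$ for $\Aab$, at most one angle in $\Aa$, at most two distinct values in $\Ab$, combined with $\sum x\le\pi$). Your extra detour through the second-order condition to lower-bound $x$ is not actually needed: once $\Ab$ is known to take at most two distinct positive values, the constraint $\sum x\le\pi$ already caps the number of copies.
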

This implies that for $1/2b<\la<2/a$ the minimum Problem (\ref{PB}) can be explicitly rewritten as a function of the central angles of the polygon. 
In particular if $\oo$ is an $N$-gone, we define $X$ its vector of central angles such that
$X=(\xi_0,...,\xi_0,\theta,x,...,x,y)$ that is $x_i$ corresponds to the elements of the classes $\Aab,\Aa,\Ab$ for $i=1,...,p$, $i=N-q_b+1,...,N$, respectively; where $|\Aab|=p$, $|\Aa|=1$, $|\Ab|=q_b$ with $p+1+q_b=N$.
Recalling (\ref{JlaCC}) we have
$$
\min_{\oo\in\C}\jla{\la}(\oo)=\min_{X\in\AAA}\jla{\la}(X),
$$
where
$$
\AAA=\{X\in\rn\quad\text{ such that }\quad \ \sum_{i=1}^N x_i=\pi, \quad x_i<\xi_0\ \text{ for }\ i=p+1,...,N\},
$$
and
\begin{eqnarray}
\jla{\la}(X)&=&\la\Big( \sum_{i=1}^{p+1} a^2\tan x_i +b^2
\sum_{j=N-q_b}^N\sin x_j\cos x_j \Big) - 2\Big( \sum_{x_i=1}^{p+1}a\tan\xi_0 +b\sum_{x_j=N-q_b}^N\sin x_j \Big). \nonumber
\end{eqnarray}
%
%
\subsection{Optimal shape for extremal values of $\la$}
We here analyse the case of extremal values of $\la$. 
In the limit cases $\la=0$ or $\la=+\infty$ the solution to (\ref{PB}) is evident to be the exterior ball $D_b$ and the interior one $D_a$, respectively. It is
in fact the same also for values of $\la$ near to these limit cases.
\begin{theorem}\label{teoDaDb}
Let $\ola$ be a minimizer to (\ref{PB});
\begin{enumerate}
\item\label{Db} if $\la\le1/2b$ then $\ola$ is unique and $\ola=D_b$;
\item\label{Da} if $\la >2/a$ then $\ola$ is unique and $\ola=D_a$.
\end{enumerate}
\end{theorem}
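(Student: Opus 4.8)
The strategy is to exploit the characterizations already built up in the paper together with direct computation of $\jla{\la}$ on the admissible classes of polygons. For part \textbf{\ref{Db}} (the case $\la\le 1/2b$), I would argue as follows. By Theorem \ref{teoJimmy} a minimizer $\ola$ is composed of segments and arcs of $\bd D_a$ and $\bd D_b$; by Step 1 in the proof of Theorem \ref{teopolyg}, if $\la>1/2b$ then no arcs of $\bd D_b$ appear, but the borderline computation \eqref{arch-chordb} actually shows that at $\la=1/2b$ the difference $\jla{\la}(\oo)-\jla{\la}(\oet)$ becomes $\le 0$ for small $\eta$, i.e.\ replacing a chord by an arc of $\bd D_b$ does not increase $\jla{\la}$. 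Conversely, for $\la<1/2b$ the same computation shows that arcs of $D_b$ are strictly favored over chords: any chord of $D_b$ can be ``rounded out'' to decrease $\jla{\la}$ strictly. Combined with Step 2 of Theorem \ref{teopolyg} (valid since $1/2b<2/a$, so certainly $\la<2/a$, meaning arcs of $D_a$ and tangent configurations are likewise disadvantageous), this forces the optimal boundary to be entirely an arc of $D_b$, i.e.\ $\ola=D_b$, with strict inequality giving uniqueness when $\la<1/2b$. The case $\la=1/2b$ needs a separate word: one must check that no genuinely different competitor achieves the same value; here I would compute $\jla{1/2b}$ explicitly on a polygon with angles in $\Aab\cup\Aa\cup\Ab$ using \eqref{Jla} and show the only minimizer is still $D_b$, since inserting any free-of-$D_b$ side produces a strictly larger value once one also uses the sign in Step 2.

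For part \textbf{\ref{Da}} (the case $\la>2/a$), the cleaner route is via Remark \ref{la=2/a} and the explicit functional. By Theorem \ref{teoJimmy}, Step 1 of Theorem \ref{teopolyg} (which only needs $\la>1/2b$, automatic here since $2/a>1/2b$), and Theorem \ref{teolaalablb}, the boundary of $\ola$ contains no arcs of $D_b$ and consists of arcs of $D_a$ together with segments in $\laa\cup\lab\cup\lbb$; but Step 2 of Theorem \ref{teopolyg} fails now, so arcs of $D_a$ are permitted a priori. The point is to show that for $\la>2/a$ every such configuration other than $D_a$ itself has strictly positive $\jla{\la}$, while $\jla{\la}(D_a)=\la|D_a|-P(D_a)=\la\pi a^2-2\pi a=\pi a(\la a-2)>0$ — wait, that is positive, so $D_a$ is \emph{not} obviously the minimizer by this naive bound; instead one compares $\jla{\la}(\oo)$ with $\jla{\la}(D_a)$ for competitors $\oo\supsetneq D_a$. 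For a polygon circumscribed to $D_a$ one has $|\oo|=\tfrac a2 P(\oo)$, hence $\jla{\la}(\oo)=P(\oo)\big(\tfrac{\la a}{2}-1\big)$, which for $\la>2/a$ is strictly increasing in $P(\oo)$; since any admissible $\oo$ in $\CC$ with a side in $\lbb$ or $\lab$ strictly contains a circumscribed polygon with strictly smaller perimeter (or one replaces such sides by tangent sides to decrease perimeter), and $P(\oo)\ge P(D_a)=2\pi a$ with equality only for $D_a$, the minimum is attained uniquely at $\ola=D_a$. More carefully: one uses the first-order optimality conditions \eqref{1ordine} to see that for $\la>2/a$ the right-hand sides force $\mu_0=a^2(\la-2/a)/\cos^2\theta>0$ incompatibly with the sign constraints unless all classes are empty, i.e.\ $\ola=D_a$; this is the quickest rigorous argument.

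The main obstacle I anticipate is the boundary case $\la=1/2b$, where strict monotonicity degenerates and one must rule out ``mixed'' minimizers (part of the boundary an arc of $D_b$, part a chord) achieving the same value. I would handle this by writing, for a candidate with one chord of central half-angle $\eta\in\Ab$ and the complementary arc on $\bd D_b$, the exact value of $\jla{1/2b}$ and comparing it with $\jla{1/2b}(D_b)$: using \eqref{arch-chordb} with $\la=1/2b$ one gets $\jla{1/2b}(\oo)-\jla{1/2b}(D_b)=2b(\eta-\sin\eta)-\tfrac b2(\eta-\sin\eta\cos\eta)>0$ for $\eta\in(0,\pi)$ (since $\eta-\sin\eta>0$ dominates), so $D_b$ is the unique minimizer here as well; a symmetric check disposes of configurations also involving $\laa$ or $\lab$ sides via Step 2 of Theorem \ref{teopolyg}. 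The rest of the proof is bookkeeping with the formulas \eqref{Jla}–\eqref{JlaCC} already available.
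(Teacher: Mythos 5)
Your overall strategy matches the paper's: use the structural results to restrict the boundary to a few types of pieces, then perform global (non-infinitesimal) replacements, and you correctly isolate the key computation for part \textbf{\ref{Db}} — the strict inequality $(\eta-\sin\eta)/(\eta-\sin\eta\cos\eta)>\tfrac14$ for all $\eta>0$, which is exactly how the paper handles the borderline value $\la=1/2b$. However, there are two genuine gaps. In part \textbf{\ref{Db}}, you never eliminate tangent segments to $D_a$ that are \emph{not} chords of $D_b$ (the class $\laa$). You attribute their elimination to Step 2 of Theorem \ref{teopolyg}, but that step only excludes \emph{arcs} of $\bd D_a$; a competitor whose boundary mixes arcs of $D_b$, chords of $D_b$, and a circumscribed-to-$D_a$ portion made of tangent segments survives your argument, because such segments cannot be ``rounded out'' to an arc of $D_b$ (their endpoints are not on $\bd D_b$). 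The paper needs a separate non-local perturbation for this (Lemma \ref{Lavuoto}, valid for $\la\le 2/(a+b)$, which covers $\la\le 1/2b$ since $a<3b$), and your proof is incomplete without it or an equivalent.

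In part \textbf{\ref{Da}}, your reduction to circumscribed sets is the weak link. The claim that a set $\oo$ with a side in $\lbb$ ``strictly contains a circumscribed polygon with strictly smaller perimeter'' does not finish the job: if $\oo'\subsetneq\oo$ then \emph{both} $|\oo'|<|\oo|$ and $P(\oo')\le P(\oo)$, so $\jla{\la}(\oo')<\jla{\la}(\oo)$ requires $\la(|\oo|-|\oo'|)>P(\oo)-P(\oo')$, which is not automatic and is precisely what the paper's Lemma \ref{Lbvuoto} establishes by an explicit computation (for $\la\ge 1/a$). Your proposed shortcut via the first-order conditions \eqref{1ordine} also does not yield a contradiction: for $\la>2/a$ one gets $\mu_0=a^2(\la-2/a)/\cos^2\theta>0$ and $\mu_k=b^2(\la-2/a)-\mu_0$, which is $\ge 0$ exactly when $\cos\theta\ge a/b$, i.e.\ $\theta\le\xi_0$ — always true by construction, so the sign constraints are perfectly consistent. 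Once the reduction to circumscribed sets is secured, your argument via $|\oo|=\tfrac a2P(\oo)$ and monotonicity of the perimeter is a valid and arguably cleaner alternative to the paper's tangent-segments-to-arc replacement (whose explicit gain is $a^2(\tan\eta-\eta)(\la-\tfrac2a)>0$); indeed, the sharper observation $|\oo|\ge\tfrac a2P(\oo)$ for \emph{every} convex $\oo\supseteq D_a$ (since $\langle x,\nu\rangle\ge a$ on $\bd\oo$) would give $\jla{\la}(\oo)\ge(\tfrac{\la a}2-1)P(\oo)\ge(\tfrac{\la a}2-1)\,2\pi a=\jla{\la}(D_a)$ with equality only for $D_a$, bypassing the reduction entirely — but that is the Bonnesen--Fenchel inequality, which the paper deliberately derives as a \emph{consequence} of this theorem, so you would need to supply the support-function proof of it to avoid circularity.
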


In order to prove this result some preliminary steps are needed.
They are collected in the following lemmas.

\begin{lemma}\label{Lavuoto}
For every $\la\le 2/(a+b)$, $\ola$ does not contain tangent sides to $D_a$ which are not chord of $D_b$.
\end{lemma}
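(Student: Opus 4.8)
The plan is to argue by contradiction: assuming $\bd\ola$ contains a tangent side to $D_a$ which is not a chord of $D_b$, I will exhibit a competitor in $\C$ with strictly smaller value of $\jla{\la}$.

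First I would invoke Theorems \ref{teoJimmy} and \ref{teolaalablb}: the boundary of $\ola$ decomposes into arcs of $\bd D_a$, $\bd D_b$ and segments lying in $\lab\cup\laa\cup\lbb$. A tangent side to $D_a$ which is not a chord of $D_b$ lies neither in $\lbb$ nor in $\lab$, so it is part of an element of $\laa$, with an associated central angle $\theta\in(0,\xi_0)$. The key move is then to replace that single $\laa$-element by a $\lbb$-element having the same angular width $2\theta$ (admissible precisely because $\theta<\xi_0$), reassembling the remaining arcs and segments into a convex set $\tilde\oo$; since the total angular measure is unchanged this yields $\tilde\oo\in\C$ with $D_a\subseteq\tilde\oo\subseteq D_b$. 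Reading off the contribution of each element to $\jla{\la}$ exactly as in formula (\ref{Jla}), one obtains
\[
\jla{\la}(\ola)-\jla{\la}(\tilde\oo)=\bigl(\la a^2\tan\theta-2a\tan\theta\bigr)-\bigl(\la b^2\sin\theta\cos\theta-2b\sin\theta\bigr)=:\Delta(\theta),
\]
so it remains to prove $\Delta(\theta)>0$ for every $\theta\in(0,\xi_0)$ whenever $\la\le 2/(a+b)$, which contradicts the minimality of $\ola$.

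To handle the sign of $\Delta$ I would factor $\Delta(\theta)=\sin\theta\,g(\theta)$ with
\[
g(\theta)=\frac{a(\la a-2)}{\cos\theta}+2b-\la b^2\cos\theta ,
\]
and set $c=\cos\theta$, so that $g(\theta)=\tilde g(c)$ with $\tilde g(c)=\dfrac{a(\la a-2)}{c}+2b-\la b^2 c$ and $c$ ranging over the open interval $(a/b,1)$ as $\theta$ ranges over $(0,\xi_0)$. Since $\la\le 2/(a+b)<2/a$ we have $a(\la a-2)<0$, hence $\tilde g''(c)=2a(\la a-2)/c^3<0$, i.e.\ $\tilde g$ is strictly concave on $[a/b,1]$. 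Moreover a direct computation gives $\tilde g(a/b)=0$ and $\tilde g(1)=(b-a)\bigl(2-\la(a+b)\bigr)\ge 0$, the last inequality being exactly $\la\le 2/(a+b)$. A strictly concave function that vanishes at one endpoint of an interval and is nonnegative at the other is strictly positive in the interior, so $\tilde g(c)>0$ for $c\in(a/b,1)$; therefore $g(\theta)>0$ and $\Delta(\theta)>0$ for all $\theta\in(0,\xi_0)$, which is the desired contradiction.

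The energy bookkeeping and the two endpoint evaluations of $\tilde g$ are routine. The point that requires care is the choice of competitor: a purely local perturbation (pushing the apex of the $\laa$-element radially, or replacing the two tangent segments by their chord) has a sign one cannot control on the whole range $\la\le 2/(a+b)$, whereas exchanging a $\laa$-element for a $\lbb$-element of equal angular width does, thanks to the strict concavity of $\tilde g$ in the variable $c=\cos\theta$. Finally I would remark that Theorem \ref{teolaalablb}, and hence the decomposition used above, holds for every $\la\ge 0$, so the argument indeed covers the whole claimed range $\la\le 2/(a+b)$, including the values $\la\le 1/(2b)$ for which $\ola$ may still contain arcs of $\bd D_b$.
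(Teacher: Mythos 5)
Your competitor is, at bottom, the same one as the paper's: the paper obtains $\ot$ by sliding the apex $Q$ of the tangent pair along one tangent line until it reaches $\bd D_b$ at a point $M$, which trades the $\laa$-element of half-angle $x$ for the chord $MP$ of the same half-angle while the two adjacent $\lab$-pieces recombine into a full tangent chord $RM$. Your bookkeeping via formula (\ref{Jla}) gives $\jla{\la}(\ola)-\jla{\la}(\tilde\oo)=\frac{\sin\theta}{\cos\theta}\,(b\cos\theta-a)\,\bigl(2-\la(b\cos\theta+a)\bigr)$, which agrees with the paper's display up to an evident typo there (the factor $b^2\cos x-a^2$ should read $b^2\cos^2x-a^2$; both are positive on the relevant range, so nothing changes). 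Your concavity argument in $c=\cos\theta$ is correct but roundabout: the factorization above gives the sign immediately, since $b\cos\theta+a<a+b$ yields $\la\le 2/(a+b)<2/(b\cos\theta+a)$.

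The one step you should not wave away is ``since the total angular measure is unchanged this yields $\tilde\oo\in\C$''. Equal angular measure is necessary but not sufficient: a $\laa$-element has both endpoints on $\bd D_a$ while a $\lbb$-element has both endpoints on $\bd D_b$, so an in-place substitution does not produce a closed curve, and if the remaining boundary contained another $\laa$-element or an arc of $\bd D_a$ with no $\lab$-segment anywhere, no cyclic rearrangement of the pieces would close up (think of a polygon circumscribed to $D_a$: you cannot trade one tangent pair for a chord of $D_b$ and keep all the other tangent pairs). What rescues the construction is exactly what the paper invokes in its first line: by Theorem \ref{lemmaAaAb} the class $\Aa$ contains at most one angle, and by Step 2 of Theorem \ref{teopolyg} (applicable since $\la\le 2/(a+b)<2/a$) there are no arcs of $\bd D_a$; hence the two sides adjacent to the tangent pair are $\lab$-segments reaching $\bd D_b$, they merge into a $\lab$-pair once the $\laa$-element is removed, and the new chord can be attached at the liberated $D_b$-node. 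With that sentence added, your proof is complete and coincides with the paper's.
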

\begin{lemma}\label{Lbvuoto}
For every $\la\ge 1/a$, $\ola$ does not contain chords of $D_b$ which are not tangent to $D_a$.
\end{lemma}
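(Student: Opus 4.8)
The plan is to argue by contradiction. Suppose $\ola$ is optimal and $\bd\ola$ contains a segment $L$ which is a chord of $D_b$ not tangent to $D_a$; then $L$ corresponds to an angle $\eta\in\Ab$ with $\eta<\xi_0$, so that the distance from the origin to $L$ equals $b\cos\eta>a$ and $a/b<\cos\eta<1$. The goal is to produce an admissible convex set $\oo'\in\C$ with $\jla{\la}(\oo')<\jla{\la}(\ola)$, obtained by replacing $L$, together with a suitable piece of the side of $\ola$ adjacent to it, by two segments tangent to $D_a$; the improvement will be governed by the elementary inequality $2/(a+b\cos\eta)<1/a\le\la$.

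First I would normalize the situation around $L$. Since $\la\ge 1/a>1/2b$, Step~1 of Theorem~\ref{teopolyg} shows $\bd\ola$ contains no arc of $D_b$, so each side of $\ola$ adjacent to $L$ is either a chord of $D_b$ or a segment tangent to $D_a$. If a neighbour of $L$ were another chord of $D_b$ not tangent to $D_a$, of half central angle $\eta_2<\xi_0$, then sliding their common vertex along $\bd D_b$ (admissible for small times of either sign) would change $\jla{\la}$ by $f(\eta+t)+f(\eta_2-t)-f(\eta)-f(\eta_2)$, with $f(\beta)=\la b^2\sin\beta\cos\beta-2b\sin\beta$; since $f''(\beta)=2b\sin\beta(1-2\la b\cos\beta)<0$ whenever $a/b<\cos\beta<1$ and $\la\ge1/a$ (for then $2\la b\cos\beta>2\la a\ge2$), the function $t\mapsto f(\eta+t)+f(\eta_2-t)$ is strictly concave near $0$, so its value at $t=0$ exceeds that at some nearby admissible $t$, contradicting optimality. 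Hence each side of $\ola$ meeting $L$ is tangent to $D_a$, and, read from the endpoint of $L$ it shares, it contains a segment from that endpoint (on $\bd D_b$) to its contact point on $\bd D_a$, at central angle $\xi_0$ from the endpoint. (For $1/2b<\la<2/a$ one could instead invoke Corollary~\ref{Nfinite}, Theorem~\ref{lemmaAaAb} and Remark~\ref{rmkoptimality} to get that $\Ab$ is at most a singleton, the alternatives $\cos x\le 1/2b\la$ and $\cos x+\cos y=1/b\la$ being incompatible with $\cos x,\cos y>a/b$ and $\la\ge1/a$.)

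Then, writing $L=PQ$ with $P,Q\in\bd D_b$, let $Q_1\in\bd D_a$ be the contact point reached from $Q$ along the adjacent side, so that the boundary arc $P\to Q\to Q_1$ of $\ola$ has central angle $\xi_0+2\eta$ and joins $P$ to $Q_1$. I would replace it by the arc $P\to V\to Q_1$, where $PV$ is the segment through $P$ tangent to $D_a$ on the side of $Q$ and $VQ_1$ is the segment tangent to $D_a$ at $Q_1$, the apex $V$ being the intersection of these two tangent lines, at distance $a/\cos\eta<b$ from the origin. This arc has the same endpoints and the same central angle, and the new set $\oo'$ stays convex and inside the ring: the only affected vertices are $V$, with interior angle $\pi-2\eta$, and $P$, with interior angle $\pi-2\xi_0$ (the other side at $P$ being tangent to $D_a$, hence inclined by $\xi_0$), while nothing changes at $Q_1$, where the incident sides remain tangent to $D_a$. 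Summing the areas and lengths of the three origin-based triangles — the $\xi_0$-contributions cancel — one finds
\[
\jla{\la}(\oo')-\jla{\la}(\ola)=\la\big(a^2\tan\eta-b^2\sin\eta\cos\eta\big)-2\big(a\tan\eta-b\sin\eta\big)=\frac{\sin\eta}{\cos\eta}\,(a-b\cos\eta)\,\big(\la(a+b\cos\eta)-2\big).
\]
Since $a/b<\cos\eta<1$, we have $a-b\cos\eta<0$ and $\la(a+b\cos\eta)-2>2\la a-2\ge0$, so the right-hand side is strictly negative; this contradicts the optimality of $\ola$ and proves the lemma.

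The main difficulty is the normalization step together with the admissibility check: one must really rule out neighbouring chords of $D_b$ and arcs of $D_b$ around $L$, so that $L$ sits next to sides tangent to $D_a$, and then verify that the competitor $\oo'$ is a genuine convex subset of the ring — the one delicate convexity point being the vertex $P$, where the argument uses precisely that the adjacent side of $L$ on that side, being tangent to $D_a$, makes the angle $\xi_0$ with $\bd D_b$. Once $\oo'$ is available, the threshold $1/a$ is forced by the single displayed inequality.
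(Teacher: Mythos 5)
Your proof is correct and its core is the same as the paper's: the chord of half central angle $\eta$ is traded for the tangent wedge spanning the same total central angle (your path $P\to V\to Q_1$ is, up to relabelling, exactly the paper's move of the vertex $M$ along the adjacent tangent line to the point where the side through $P$ becomes tangent to $D_a$), and your difference $\frac{\sin\eta}{\cos\eta}(a-b\cos\eta)\bigl(\la(a+b\cos\eta)-2\bigr)$ is precisely the paper's formula, with the same threshold forced by $a+b\cos\eta>2a$. The only divergence is in the normalization: where the paper secures the adjacent tangent side by invoking Lemma \ref{Lb<1} (so that $|\lbb|\le 1$), you rule out an adjacent non-tangent chord directly via the strict concavity of $f(\beta)=\la b^2\sin\beta\cos\beta-2b\sin\beta$ under sliding of the common vertex along $\bd D_b$ -- an equally valid and somewhat more self-contained route to the same configuration.
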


\begin{proof}[Proof of Theorem \ref{teoDaDb}]
This proof is in fact analogous and at the same time opposite to the
proof of Theorem \ref{teopolyg}. Indeed we here consider the same
constructions as before, to  prove the exact complement: for $\la\le
1/2b$ and $\la>2/a$, the set $\ola$ does not contains segments.

\medskip

Proof of part {\bf \ref{Db}}.

\noindent As $\la<2/(a+b)$, by Theorem \ref{teopolyg}, Lemma
\ref{Lavuoto} and Theorem \ref{teolaalablb} we have that if $\bd\oo$
contains a segment, then it is a chord of $D_b$. Let $AB$ be one of
these chords, $A=b\ee^{i\theta_A}$, $B=b\ee^{i\theta_B}$. We define
$\oet$ starting from $\oo$ and substituting the chord $AB$ with the
corresponding arc on $D_b$; with $\eta= (\theta_A-\theta_B)/2$.
%
%
We compare $\jla{\la}(\oo)$ with $\jla{\la}(\oet)$ getting
$$
\jla{\la}(\oet)-\jla{\la}(\oo)= \frac b2 (\eta-\sin\eta\cos\eta)\left(\la 2 b- 4\frac{\eta-\sin\eta}{\eta-\sin\eta\cos\eta}\right),
$$
which is negative as $\la 2 b\le 1$ and
$$
\frac{\eta-\sin\eta}{\eta-\sin\eta\cos\eta}>\frac 14,\qquad\text{ for every }\eta>0.
$$
Hence $\bd \ola$ does not contain chords of $D_b$, which implies
$\ola=D_b$ since by step 2 in Theorem \ref{teopolyg}, $\bd\ola$ does
not contain neither arcs of $D_a$.

\medskip

Proof of part {\bf \ref{Da}}.

\noindent As $\la>1/a$, by  Lemma \ref{Lbvuoto} and Theorem \ref{teolaalablb}  we have that $\ola$ can be composed only by arcs of $\bd D_a$ and tangent segments to $D_a$; let $AB$, $BC$, with $A,C\in\bd D_a$, $A=a\ee^{i \theta_A}$, $C=a\ee^{i \theta_C}$, be some of them. 
Let $\eta$ be such that $\tan\eta=\overline{AB}/a=\overline{BC}/a$ and let us consider the set $\oet$ obtained from $\oo$ substituting the segments $\overline{AB}, \overline{BC}$ by the corresponding arc of $D_a$, ${AC}$.
Computing $\jla{\la}(\oo)$, and $\jla{\la}(\oet)$ we get
$$
\jla{\la}(\oo)-\jla{\la}(\oet)=a^2(\tan\eta-\eta)(\la-\frac 2a),
$$
which is positive and hence $\ola$ cannot contain tangent segments to $D_a$.
This entails that $\ola=D_a$.
\end{proof}

We now give the proof of Lemmas \ref{Lavuoto} and \ref{Lbvuoto}.
Notice that we here use non-local perturbations of $\oo$.
\begin{proof}[Proof of Lemma \ref{Lavuoto}]
Let $\oo$ be a set in the class $\C$ with $x\in\Aa$; let $PQ$, $QR$ be the corresponding tangent sides to $D_a$.
Notice that we can assume $R,P\in\bd D_b$ as by Theorem \ref{lemmaAaAb} there exists at most one angle in the class $\Aa$.

\begin{center}
\begin{figure}[h]
\begin{tikzpicture}[x=0.40mm,y=0.40mm]
\draw[thin] (0,0) circle (50);
\draw (-46,-20) node [right] {$D_b$};
\draw (-30,0) node [right] {$D_a$};
\draw[thin] (0,0) circle (30);
\draw[thick,-|] (-40,30) node [left] {$R$} -- (12.42640687,30) node [above] {$Q$};   
\draw[thick] (12.42640687,30) -- (49.49747468, -7.07106781) node [right] {$P$}; 
\draw[very thin] (-40,30) -- (40,30) node [right] {$M$};
\draw[very thin] (40,30) -- (49.49747468, -7.07106781);  
\draw[very thin] (0,0) -- (12.42640687,30);
\draw[very thin] (0,0) -- (21.21320344,21.21320344);
\draw[very thin] (45:9) arc (45:68:9) node[above right]{$x$};
\draw[very thin] (45:11) arc (45:68:11);
\draw (-30,30) node [below] {$\oo$};
\draw[->, very thin] (35,47) node[right]{$\ot$}--(30,32);
\draw (0,-60) node{(a)};
\begin{scope}[xshift=200]
\draw[thin] (0,0) circle (50);
\draw (-46,-20) node [right] {$D_b$};
\draw (-30,0) node [right] {$D_a$};
\draw[thin] (0,0) circle (30);
\draw[thick,-|] (-40,30) node [left] {$R$} -- (12.42640687,30) node [above] {$Q$};   
\draw[very thin] (12.42640687,30) -- (49.49747468, -7.07106781) node [right] {$P$}; 
\draw[thick,|-] (0,30) node[above]{$H$} -- (40,30) node [right] {$M$};
\draw[thick] (40,30) -- (49.49747468, -7.07106781);  
\draw (44.5,11.5) node[right]{N};
\draw[thick] (42,11.5)--(47,11.5);
\draw[very thin] (0,0) -- (12.42640687,30);
\draw[very thin] (0,0) -- (21.21320344,21.21320344);
\draw[very thin] (45:9) arc (45:68:9) node[above right]{$x$};
\draw[very thin] (45:11) arc (45:68:11);
\draw (37,-4) node {$\ot$};
\draw[->, very thin] (35,47) node[right]{$\oo$}--(30,32);
\draw (0,-60) node{(b)};
\end{scope}
\end{tikzpicture}
\caption{For $\la\le 2/(a+b)$, $\laa=\emptyset$; for $\la\ge 1/a$, $\lbb=\emptyset$}\label{Lbvuoto1}
\end{figure}
\end{center}

Consider a set $\ot$ as in Figure \ref{Lbvuoto1} (a), obtained from $\oo$ by moving the point $Q$ along the line $RQ$, up to the point $M$ on the boundary of $D_b$. 
Hence, 
$$
\jla{\la}(\ot)-\jla{\la}(\oo)= \frac{\la}2 \overline{QM}\,\overline{QP}\sin 2x -(\overline{QM}+\overline{PM}-\overline{QP}).
$$ 
As $\overline{QM}=\sqrt{b^2-a^2}-a\tan x$, and $\overline{QP}=\sqrt{b^2-a^2}+a\tan x$, we obtain $\overline{PM}=2b\sin x$ and hence
$$
\jla{\la}(\ot)-\jla{\la}(\oo)=\frac{\sin x}{\cos x}(b^2\cos x-a^2) \left(\la-\frac 2{b\cos x+a}\right),
$$
which is negative since $\la\le 2/(a+b) <2/(b\cos x+a)$.
This shows that if $\la\le 2/(a+b)$ then the class $\laa$ has to be empty.
\end{proof}

\begin{proof}[Proof of Lemma \ref{Lbvuoto}]
Assume that $\bd\oo$ contains a chord $MP$, with $M,P\in\bd D_b$,
with $MP$ not tangent to $D_a$. By Lemma \ref{Lb<1} we can assume
$\lbb=\{MN\}$, where $N$ is the  middle point of $MP$; then there
exists a side $MR$ which touches $D_a$ at a point $H\in\bd\oo\cap\bd
D_a$.

Consider the set $\ot$ obtained from $\oo$ by moving the point $M$
along the line $HM$ up to the point $Q$ such that $QP$ is tangent to
$D_a$ (see Figure \ref{Lbvuoto1} (b)). As in the proof of Lemma
\ref{Lavuoto} we get
$$
\jla{\la}(\oo)-\jla{\la}(\ot)=\frac{\sin x}{\cos x}(b^2\cos x-a^2) \left(\la-\frac 2{b\cos x+a}\right),
$$
which is positive for $\la\ge 1/a$ as $b\cos x+a>2a$.
This shows that if $\la\ge 1/a$ then $\lbb$ has to be empty.
\end{proof}

\section{Further characterizations}\label{SecNumbersOfSides}
This section is devoted to a more precise analysis of optimal sets,
in particular regarding the total number of sides, and further
properties of the classes $\lab,\laa,\lbb$. These results are useful
if one wants to describe the optimal sets for a given value of $a,b$
as shown in Section \ref{examples}.

\subsection{Analysis of large values of $\la$}
In this section we give a complete characterization of optimal sets
for sufficiently large values of $\la$. In particular in Theorem
\ref{teopxi0qx} we give the exact  number of sides of an optimal
polygon together with a description of its classes of central angles
for $1/b\le\la<2/a$.
\begin{theorem}\label{teopxi0qx}
For $1/b\le\la<2/a$ optimal sets $\ola$ are polygons in the class $\CC$ with a minimum number of sides. 
In particular let $p_0$ be the largest integer such that  $p_0\xi_0\le\pi$, where $\xi_0$ is defined as in (\ref{xi0}); then $|\Aab|=p_0$ and either $\Ab$ is empty or so is $\Aa$.

More precisely let $x=\pi-p_0\xi_0$; if $x\neq 0$ then $\ola$ is
inscribed into $D_b$ for $1/b\leq \la\le 2/({b\cos x+a})$ while it
is circumscribed to $D_a$ for $2/({b\cos x+a})\le \la<2/a$ and $\ola$ has either $p_0$ or $p_0+1$ sides.
\end{theorem}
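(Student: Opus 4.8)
The plan is to capitalise on the reductions already established. Since $1/b>1/(2b)$, Corollary~\ref{corollaryPolyg} and Theorem~\ref{teolaalablb} put $\ola\in\CC$, and Theorem~\ref{lemmaAaAb} together with Remark~\ref{rmkoptimality} leave very little freedom: setting $p=|\Aab|$ and reading the trip once around $\bd\ola$ in central half-angles gives $p\,\xi_0+(\text{residual half-angle})=\pi$, where the residual half-angle is carried by at most one angle $\theta\in\Aa$ (taken once) and by angles $\eta\in\Ab$; a segment of $\laa$ of half-angle $\theta$ contributes $a^2(\la-\tfrac2a)\tan\theta$ to $\jla{\la}$, one of $\lbb$ of half-angle $\eta$ contributes $g(\eta):=\la b^2\sin\eta\cos\eta-2b\sin\eta$, and one of $\lab$ contributes exactly $a^2(\la-\tfrac2a)\tan\xi_0=g(\xi_0)=(\la a-2)\sqrt{b^2-a^2}$, which is $<0$ because $\la<2/a$. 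I will also use $g''(\eta)=2b\sin\eta\,(1-2\la b\cos\eta)$, so that $g$ is strictly concave wherever $\cos\eta>1/(2\la b)$; since $\la\ge1/b$ forces $1/(2\la b)\le\tfrac12$, this holds on any interval contained in $[0,\pi/3)$.

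First I would prove $|\Aab|=p_0$, i.e.\ minimality of the number of sides. The key point is that replacing a portion of $\bd\ola$ of total central angle $2\xi_0$ — built from $\laa$-segments and $\lbb$-chords — by a single $\lab$-chord does not increase $\jla{\la}$, and strictly decreases it unless that portion was already such a chord. For the tangent pieces this follows from convexity of $\tan$ (so $\tan\theta/\theta\le\tan\xi_0/\xi_0$, whence $a^2(\la-\tfrac2a)\tan\theta\ge(\theta/\xi_0)\,g(\xi_0)$, the inequality reversing because $g(\xi_0)<0$) together with superadditivity of $\tan$; for the chord pieces one uses $g(\eta)\ge(\eta/\xi_0)g(\xi_0)$ — valid because $g$ lies above the chord of its graph over $[0,\xi_0]$ (using $g(\xi_0)=a^2(\la-\tfrac2a)\tan\xi_0$ and concavity of $g$, which is immediate when $p_0\ge3$ since then $\xi_0\le\pi/3$; the case $p_0=2$ needs a direct check, see below) — together with subadditivity of $g$. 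Whenever $p<p_0$ the residual central angle is $2\pi-2p\xi_0>2\xi_0$, so such a portion exists; trading it repeatedly forces $|\Aab|=p_0$ at the optimum, hence $\ola$ has $p_0$ or $p_0+1$ sides, the least possible in $\CC$.

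It remains to analyse the residual bump. Put $x=\pi-p_0\xi_0\in[0,\xi_0)$; if $x=0$ then $\ola$ is the polygon with $p_0$ sides inscribed in $D_b$ and circumscribed to $D_a$. If $x\ne0$, write the residual half-angle as $\theta$ in $\Aa$-type pieces and $x-\theta$ in $\Ab$-type pieces; by superadditivity of $\tan$ and subadditivity of $g$ on $[0,x]$, the contribution of any residual configuration to $\jla{\la}$ is at least
$$
E(\theta):=a^2\Big(\la-\tfrac2a\Big)\tan\theta+g(x-\theta),\qquad\theta\in[0,x].
$$
Since $p_0\ge2$ we have $x<\pi/(p_0+1)\le\pi/3$, so $\cos(x-\theta)\ge\cos x>\tfrac12\ge1/(2\la b)$ and hence
$$
E''(\theta)=2a^2\Big(\la-\tfrac2a\Big)\sec^2\theta\,\tan\theta+g''(x-\theta)<0\qquad\text{on }[0,x];
$$
$E$ is strictly concave, so it attains its minimum on $[0,x]$ only at $\theta=0$ (a $\lbb$-bump: $\ola$ inscribed in $D_b$, with $p_0+1$ sides) or at $\theta=x$ (a $\laa$-bump: $\ola$ circumscribed to $D_a$, with $p_0+1$ sides). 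Finally, $E(0)-E(x)=g(x)-a^2(\la-\tfrac2a)\tan x=\tfrac{\sin x}{\cos x}\,(b^2\cos^2x-a^2)\big(\la-\tfrac2{b\cos x+a}\big)$, the computation already performed in Lemma~\ref{Lavuoto}; since $\cos x>a/b$ the factor $b^2\cos^2x-a^2$ is positive, so $E(0)\le E(x)$ exactly when $\la\le\tfrac2{b\cos x+a}$. As $0<\cos x<1$ gives $\tfrac2{b\cos x+a}\in[1/b,2/a)$, this is the stated dichotomy: $\ola$ inscribed in $D_b$ for $1/b\le\la\le\tfrac2{b\cos x+a}$ and circumscribed to $D_a$ for $\tfrac2{b\cos x+a}\le\la<2/a$.

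The real obstacle is making the two ``trade'' comparisons fully rigorous: one must handle carefully the half-angle/full-angle bookkeeping of the classes $\Aab,\Aa,\Ab$ (and the pairing conventions of $\lab$ and $\lbb$), and, above all, one must control $g$ on the whole interval $[0,\xi_0)$ rather than only where it is concave. The delicate case is $p_0=2$, i.e.\ $a/b<\tfrac12$ and $\xi_0\in(\pi/3,\pi/2)$: there $g$ may be convex near $\xi_0$ (precisely when $2\la a<1$), the residual chords of a configuration with $p<p_0$ need not be mergeable into one, and the inequality $g(\eta)\ge(\eta/\xi_0)g(\xi_0)$ is no longer automatic — one then compares directly the finitely many admissible triangles, still using $\la\ge1/b$, which via $2\la b\cos^2\tfrac t4\cos\tfrac t2=\la b(\cos\tfrac t2+\cos^2\tfrac t2)\ge\cos\tfrac t2+\cos^2\tfrac t2>1$ for $t<\pi/2$ yields the subadditivity of $g$ on $[0,\pi/2)$. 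This restriction $\la\ge1/b$, and the elementary bound $x<\pi/(p_0+1)\le\pi/3$ which pins $\cos x>\tfrac12$ on the relevant angular window, are exactly what make everything downstream of them go through.
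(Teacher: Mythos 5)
Your strategy is genuinely different from the paper's for the structural part of the statement, and where it applies it is both correct and arguably cleaner. The paper establishes $|\Aa|\cdot|\Ab|=0$ and $|\Aab|=p_0$ by leaning on Lemma \ref{Aabnotempty} and Lemma \ref{Lb<1} (which themselves rest on the first and second order conditions of Theorem \ref{lemmaAaAb} and Remark \ref{rmkoptimality}, a translation argument for the triangle with $\Aab=\{\xi_0\}$, $\Aa=\{\theta\}$, $\Ab=\{x\}$, and explicit monotonicity checks on one-parameter families of triangles), and only then compares $\ola^a$ with $\ola^b$. You replace all of that with an exchange argument: the per-angle contributions $\theta\mapsto a^2(\la-\tfrac2a)\tan\theta$ and $\eta\mapsto g(\eta)$ are concave, vanish at $0$, and agree at $\xi_0$, so on the simplex $\{\text{angles summing to }\pi-p\xi_0,\ \text{each}\le\xi_0\}$ the reduced functional is minimized at a vertex, i.e.\ at a configuration with the maximal number of $\xi_0$'s and one leftover angle of a single type; your final comparison $E(0)-E(x)=\tfrac{\sin x}{\cos x}(b\cos x-a)\bigl(\la(b\cos x+a)-2\bigr)$ agrees with the formula in the paper's proof and yields the same dichotomy. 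When $b\le 2a$ (so $\la\ge1/b\ge1/(2a)$ and $g''\le0$ on all of $[0,\xi_0]$) this is a complete and self-contained argument, shorter than the paper's.

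The genuine gap is the case $b>2a$ with $1/b\le\la<1/(2a)$, which you flag but do not close. There $g$ is convex on $[\eta^*,\xi_0]$ with $\cos\eta^*=1/(2\la b)$, so the concavity-on-a-polytope step, the chord inequality $g(\eta)\ge(\eta/\xi_0)g(\xi_0)$, and the endpoint argument for $E$ on intervals reaching up to $\xi_0$ all fail simultaneously; subadditivity of $g$ alone does not rescue you, because merging is only legitimate when the merged angle stays $\le\xi_0$. Concretely, for $\xi_0\in(\pi/3,\pi/2)$ the square inscribed in $D_b$ (four angles $\pi/4<\xi_0$, $p=0<p_0=2$) is admissible in $\CC$ and cannot be reduced by pairwise merging, so your claim that one is left with ``finitely many admissible triangles'' is unjustified without first invoking the second-order condition $\cos x\le 1/(2\la b)$ of Remark \ref{rmkoptimality} to force $x\ge\pi/3$ and hence at most three non-degenerate angles — which is exactly the content of Lemma \ref{Lb<1} and Lemma \ref{Aabnotempty}. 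Even after that reduction the remaining candidates form one-parameter families of triangles (e.g.\ $\{\xi_0,x,y\}$ and $\{z,z,u\}$), not a finite set, and the paper disposes of them by checking that $\jla{\la}$ is monotone in the free angle for $\la\ge b/(b^2+ab-2a^2)$; nothing in your proposal performs this comparison. Until that regime is handled, the claims $|\Aab|=p_0$ and $|\Aa|\cdot|\Ab|=0$ are only proved for $b\le 2a$ (or $\la\ge 1/(2a)$).
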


Before giving the proof we present some preliminary results, namely Lemma \ref{Aabnotempty} and Lemma \ref{Lb<1}.
\begin{lemma}\label{Aabnotempty}
Let $\ola$ be an optimal set with $\la\ge 1/b$. Then its class $\Aab$ is not empty.
\end{lemma}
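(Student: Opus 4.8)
The plan is to argue by contradiction: suppose $\ola$ is optimal for some $\la\ge 1/b$ but $\Aab=\emptyset$. By Theorem \ref{teolaalablb} and Corollary \ref{Nfinite}, $\ola$ is then a polygon all of whose sides lie in $\laa\cup\lbb$, i.e.\ every side is either tangent to $D_a$ without touching $\bd D_b$, or a chord of $D_b$ not tangent to $D_a$; moreover the central angles satisfy $\sum_{\theta_i\in\Aa}\theta_i+\sum_{\eta_j\in\Ab}\eta_j=\pi$ with each angle strictly less than $\xi_0$. The first thing I would do is invoke Theorem \ref{lemmaAaAb}: since $\la\ge 1/b$ we are in particular in the range $\la\neq 2/a$, so $\Aa$ is either empty or a single angle $\theta$, and $\Ab$ consists of copies of one angle $x$ possibly together with one smaller angle $y$ (with $\cos x+\cos y=1/b\la$ in the latter case). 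This already reduces the situation to a very small finite‑dimensional family of candidate polygons.

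Next I would try to produce an explicit perturbation that strictly decreases $\jla{\la}$, thereby contradicting optimality. The natural move, given the geometry, is to take two consecutive sides meeting at a vertex $B$ that is \emph{not} on $\bd D_b$ and push $B$ radially outward (or perform a variant of the parallel‑chord / vertex‑sliding perturbations already used in the proofs of Theorems \ref{teopolyg} and \ref{teolaalablb} and Lemmas \ref{Lavuoto}, \ref{Lbvuoto}), in the direction that tends to inscribe the polygon into $D_b$. Since no side touches $\bd D_b$, such a perturbation keeps $\oo$ in $\C$ for small parameter, and I expect a first‑order computation of the form
$$
\jla{\la}(\oo^{\ep})-\jla{\la}(\oo)=\ep\,\big(c_1\la - c_2\big)+o(\ep),
$$
with $c_1,c_2>0$ depending on the local geometry, whose sign one controls using $\la\ge 1/b$. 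Concretely, the cleanest route is probably to compare $\ola$ with the polygon obtained by replacing a chord $\lbb$‑side together with an adjacent side by the configuration containing a $\lab$‑side of length $\sqrt{b^2-a^2}$: intuitively, for $\la$ large the functional rewards having area concentrated near $D_a$ while keeping perimeter, and the $\xi_0$‑wedge (which contributes $\la a^2\tan\xi_0-2a\tan\xi_0$ and "uses up" the efficient angle $\xi_0$) beats any combination of $\Aa$/$\Ab$ angles summing to the same total. So I would compute $\jla{\la}$ on a polygon with one $\xi_0$ removed versus replaced and show the difference has a sign forced by $\la\ge 1/b$.

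The main obstacle, I expect, is bookkeeping: one must handle the several sub‑cases allowed by Theorem \ref{lemmaAaAb} ($\Aa$ empty or not, $\Ab$ with one or two distinct angles, and the degenerate possibility that $\ola$ has very few sides so that "two consecutive non‑$D_b$ sides" need not literally exist), and in each case exhibit a valid competitor staying in $\C$ (or in $\CC$) whose $\jla{\la}$‑value is strictly smaller. The borderline exponent $1/b$ should be exactly what makes the relevant inequality $c_1\la\ge c_2$ tight, so the estimate will be delicate near $\la=1/b$; I would pay particular attention to checking that the competitor I build genuinely respects the constraint $D_a\subseteq\oo$ (the analogue of the condition $\cos\eta\ge a/b$ appearing repeatedly above), since otherwise the perturbation leaves $\C$. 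Once a strictly improving competitor is produced in every case, optimality of $\ola$ is contradicted, so $\Aab\neq\emptyset$.
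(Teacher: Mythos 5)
Your proposal is a plan rather than a proof, and the step that is missing is precisely the one that makes the hypothesis $\la\ge 1/b$ do any work. First, a structural point you only gesture at: if $\Aab=\emptyset$ then $\Aa=\emptyset$ as well (a block of $\laa$-sides begins and ends on $\bd D_a$ and cannot be joined to an $\lbb$-chord without an intermediate $\lab$-side, and by Theorem \ref{lemmaAaAb} a single angle $\theta<\xi_0<\pi/2$ cannot by itself sum to $\pi$). So the hypothetical optimal set is a polygon \emph{inscribed in $D_b$} with $q\ge 3$ chords. This kills your first perturbation outright: every vertex already lies on $\bd D_b$, so there is no vertex ``not on $\bd D_b$'' to push outward. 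Your fallback --- a first-order computation $\jla{\la}(\oo^\ep)-\jla{\la}(\oo)=\ep(c_1\la-c_2)+o(\ep)$ for a vertex-sliding move --- also cannot close the argument, because the configurations you must exclude (e.g.\ the regular $N$-gon, or more generally any inscribed polygon satisfying (\ref{1ordine})) are genuine critical points of the constrained problem: the first-order term vanishes, and no sign can be extracted at order $\ep$. The paper's proof goes through the \emph{second-order} conditions instead: Remark \ref{rmkoptimality} forces any repeated angle $x\in\Ab$ to satisfy $\cos x\le 1/(2\la b)\le 1/2$, hence $x\ge\pi/3$; combined with $\sum\eta_j=\pi$ this forces $q=3$, so the candidate is an isosceles triangle $\{x,x,\pi-2x\}$ with $x\in[\pi/3,\xi_0]$, and a one-variable monotonicity computation shows $\jla{\la}$ decreases in $x$ up to $x=\xi_0$ --- at which point the side is tangent to $D_a$, contradicting $\Aab=\emptyset$.

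Your alternative suggestion (a non-local comparison replacing an $\lbb$-chord and an adjacent side by a configuration containing a $\lab$-side) could in principle substitute for the last step, but only after the reduction to triangles; applied to an arbitrary inscribed $q$-gon it requires an explicit inequality that you neither state nor prove, and the intuition you invoke (``the $\xi_0$-wedge beats any combination of angles'') is essentially the conclusion of Theorem \ref{teopxi0qx}, whose proof in the paper \emph{uses} this lemma --- so leaning on it here is circular. As written, the proposal identifies the right contradiction target but contains no argument that distinguishes $\la\ge 1/b$ from $\la<1/b$, which is the substance of the lemma.
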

\begin{proof}
Let $p=|\Aab|, q=|\Ab|$.
Assume $q\ge 2$ hence we have: either $\Ab=\{x,y\}$ or $\Ab$ contains at least two copies of the same angle $x$.

As pointed out in Remark \ref{rmkoptimality}, in both cases optimality conditions (\ref{1ordine}), (\ref{criticalcone}), (\ref{D2jla>0}) imply
\begin{equation}\label{xi1}
\cos x\le \frac 1{2\la b}.
\end{equation}
that is $x\ge x_0$, where $\cos x_0=1/2\la b$.
Hence $p\xi_0+(q-1) x\ge p\xi_0+(q-1)x_0$ which entails
\begin{equation*}
p\xi_0+(q-1)x_0<\pi.
\end{equation*}

If $p=0$ then $q\ge 3$ as $\Aab$ empty implies $\Aa$ is empty as well. 
In particular if $q\ge 4$ then previous argument implies $x_0<\pi/3$ which contradicts the fact that $\cos x_0=1/2\la b\le1/2$. 
If $q=3$ then $\ola$ is an isosceles triangle identified by its central angles as $\{x, x, \pi-2x\}$ with $x\in[\frac {\pi}3,\xi_0]$. 
By direct computations it turns out that the functional $\jla{\la}$ is  monotone decreasing as function of $x$, for every $\la\ge \frac 1b$ and hence the optimal isosceles triangle is determined for $x=\xi_0$, which contradicts the fact that $\Aab$ is empty.
\end{proof}

\begin{lemma}\label{Lb<1}
For $\la\ge \min\{1/2a,1/b\}$ any optimal set $\ola$ has $|\lbb|\le 1$.
\end{lemma}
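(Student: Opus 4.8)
The plan is to argue by contradiction: assume $\ola$ is optimal with $|\lbb| = |\Ab| =: q \ge 2$, and first cut the problem down to a single configuration. Let $x = \max \Ab$. By Theorem \ref{lemmaAaAb}, either $\Ab$ contains at least two copies of $x$ or $\Ab = \{x, y\}$ with $x > y$; in either situation the second order optimality conditions force $\cos x \le \frac{1}{2\la b}$, cf.\ Remark \ref{rmkoptimality}. Since the chord attached to $x$ is not tangent to $D_a$ we have $x < \xi_0$, hence $\cos x > \cos \xi_0 = a/b$, and the two bounds give $\la < \frac{1}{2a}$. Combined with the hypothesis $\la \ge \min\{\frac{1}{2a}, \frac1b\}$, this rules out $b \le 2a$ (for which $\min\{\frac{1}{2a},\frac1b\} = \frac{1}{2a}$), so from now on $b > 2a$ and $\la \ge \frac1b$. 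In particular $\cos \xi_0 = a/b \in (0, \tfrac12)$, so $\xi_0 > \tfrac\pi3$, and $\cos x \le \frac{1}{2\la b} \le \tfrac12$ gives $x \ge \tfrac\pi3$.

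Next I would use the angular constraint $\sum_{\Aab}\xi_0 + \sum_{\Aa}\theta_i + \sum_{\Ab}\eta_j = \pi$ from (\ref{JlaCC}), together with Lemma \ref{Aabnotempty}, which gives $p := |\Aab| \ge 1$ since $\la \ge \frac1b$. If $\Ab$ has at least two copies of $x$, then $\sum_{\Ab}\eta_j \ge 2x \ge \tfrac{2\pi}3$, so (using $p \ge 1$) $\pi \ge \xi_0 + \tfrac{2\pi}3 > \pi$, a contradiction. Hence $q = 2$ and $\Ab = \{x, y\}$ with $x > y$. Then $y$ yields a strictly negative eigenvalue of $D^2\jla{\la}$, so by Remark \ref{rmkoptimality} the class $\Aa$ is empty, and the constraint reduces to $p\xi_0 + x + y = \pi$. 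If $p \ge 2$ then $\pi > 2\xi_0 + x \ge 2\xi_0 + \tfrac\pi3 > \pi$, impossible; therefore $p = 1$ and $x + y = \pi - \xi_0$.

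It remains to eliminate this last configuration, which I would do by a sharp trigonometric estimate. From $x < \xi_0$ and $x + y = \pi - \xi_0$ one gets $x - y < 3\xi_0 - \pi$; then, since $\tfrac{x+y}2 = \tfrac\pi2 - \tfrac{\xi_0}2$,
\begin{equation*}
\cos x + \cos y = 2\sin\tfrac{\xi_0}2\cos\tfrac{x-y}2 \;>\; 2\sin\tfrac{\xi_0}2\cos\tfrac{3\xi_0 - \pi}2 = \cos\xi_0 - \cos 2\xi_0 = (1 - \cos\xi_0)(1 + 2\cos\xi_0) > 1,
\end{equation*}
the last inequality being valid precisely because $\cos\xi_0 \in (0, \tfrac12)$. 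This contradicts the first order relation (\ref{cosxcosy}), that is $\cos x + \cos y = \frac{1}{b\la} \le 1$ (where $\la \ge \frac1b$ is used), and the lemma follows.

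The step I expect to be the real difficulty is this final one: after the counting argument the only surviving candidate is a quadrilateral made of one $\lab$-pair and two chords of $D_b$ of half-angles $x > y$, and this candidate actually satisfies all the first and second order optimality conditions, so it cannot be discarded by any local perturbation; it is excluded only globally, via the identity $2\sin\tfrac{\xi_0}2\cos\tfrac{3\xi_0-\pi}2 = \cos\xi_0 - \cos 2\xi_0$ and the fact that $(1-\cos\xi_0)(1+2\cos\xi_0)$ exceeds $1$ exactly on the range $\cos\xi_0 < \tfrac12$ to which the hypothesis $\la \ge \min\{\frac{1}{2a},\frac1b\}$ confines us.
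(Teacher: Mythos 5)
Your proof is correct, and while it opens exactly as the paper's does (the second--order condition $\cos x\le 1/(2\la b)$ played against $\cos x>a/b$ forces $\la<1/(2a)$, reducing everything to $b>2a$, $\la\ge 1/b$), the elimination of the surviving configurations follows a genuinely different route. The paper does not invoke Lemma \ref{Aabnotempty} at this point: it counts angles to get $p+q\le 3$, discards the sub-case $\Aa\neq\emptyset$ by a translation argument, disposes of the two remaining triangles $\{\xi_0,x,y\}$ and $\{z,z,u\}$ by an (unwritten) direct computation showing that $\jla{\la}$ decreases in the relevant variable on $(0,\xi_0)$, and kills the repeated-angle case by deriving $p=0$ and $N=2$. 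You instead put Lemma \ref{Aabnotempty} to work up front to get $p\ge 1$ --- which at one stroke eliminates every configuration containing two copies of $x$ and removes the candidate $\{z,z,u\}$ from the picture --- replace the translation argument by the two-negative-eigenvalue principle of Remark \ref{rmkoptimality}, and exclude the last configuration $\{\xi_0,x,y\}$ by showing that the first-order relation (\ref{cosxcosy}) is incompatible with $x+y=\pi-\xi_0$ and $0<y<x<\xi_0$: the product-to-sum identity gives $\cos x+\cos y=2\sin\tfrac{\xi_0}{2}\cos\tfrac{x-y}{2}>(1-\cos\xi_0)(1+2\cos\xi_0)>1\ge 1/(b\la)$, and every step checks out (in particular $x-y<3\xi_0-\pi$, and $(1-c)(1+2c)>1$ exactly for $c\in(0,\tfrac12)$, i.e. $b>2a$). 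What your route buys is a fully explicit and verifiable argument where the paper delegates a ``direct computation'' to the reader; the cost is the reliance on Lemma \ref{Aabnotempty}, which is legitimate since that lemma precedes this one and its proof is independent of it. Two cosmetic slips: the final candidate is the triangle the paper calls $T'$, not a quadrilateral, and it is misleading to say it ``satisfies all the first and second order optimality conditions'' --- your own argument shows precisely that the first-order condition (\ref{cosxcosy}) cannot hold for it under the angular constraints.
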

\begin{proof}
Assume $|\lbb|=|\Ab|\ge 2$, then as shown in the proof of Lemma \ref{Aabnotempty} the optimality conditions (\ref{criticalcone}), (\ref{D2jla>0}) implies (\ref{xi1}), which gives $\la< 1/2a$ as by construction $\cos x >a/b$. 
Hence if $\la\ge 1/2a$ it holds $|\Ab|\le 1$.

\medskip

Assume now $\la\ge1/b$; as we have already proved the thesis for $\la\ge 1/2a$, it is sufficient to consider the case $b> 2a$.
Let $p=|\Aab|$, $q=|\Ab|$ and $N$ be the total number of sides of $\ola$.
as $b>2a$, it holds $\xi_0>\pi/3$.
%
%
Assume $\Ab$ contains two different angles $x,y$ with  $x>y$, hence by Theorem \ref{lemmaAaAb} $\Ab$ contains $(q-1)$ copies of $x$ and one copy of $y$ with $q\ge 2$ and it holds $p\xi_0+(q-1)x+y\le \pi$ (where equality holds if there does not exist an angle $\theta\in\Aa$). 
Notice that as $\cos x\le 1/(2\la\,b)$ and $\la>1/b$, it holds $x>\pi/3$. 
Moreover, by construction, $\xi_0>x$, which gives $\pi>(p+q-1)\pi/3$ which implies $p+q<4$ that is $p+q\le 3$ and hence the only possibility is $p+q=3$ either with $\Aa$ empty, or with $\Aa=\{\theta\}$ for some $\theta$.

The case $\Aa$ not empty cannot be optimal as it implies $q=2, p=1$ and by translation we can easily obtain a new domain $\widetilde\oo$ whose sides do not belong to $\lab\cup\laa\cup\lbb$ such that the value of $\jla{\la}$ is unchanged. As $\widetilde \oo$ cannot be optimal, so is not $\oo$. 
In the case $\Aa=\emptyset$ we only have two candidates: the triangles $T'$ and $T''$ determined by their sets of angles as $\{\xi_0,x,y\}$ and $\{z,z,u\}$, respectively. 
By a direct computation we obtain that neither $T'$ nor $T''$ are optimal; in fact $\jla{\la}$ can be seen as a function of $x,z$, respectively, which decreases for $x,z\in(0,\xi_0)$ for $\la\ge b/(b^2+ab-2a^2)$, which is the case for $\la\ge 1/b$ (or $\la\ge1/2a$).
%
%
Hence,
$$
\jla{\la}(T'), \jla{\la}(T'')>\jla{\la}(T),
$$
where $T$ is the triangle determined by the angles $\{\xi_0,\xi_0,\pi-2\xi_0\}$.

Assume now that the class $\Ab$ only contains copies of a same angle $x$, so that $p\xi_0+qx\le\pi$.
We want to prove that $q\le 1$.
Indeed if $q\ge 2$ then the optimality conditions (\ref{criticalcone}), (\ref{D2jla>0}) implies (\ref{xi1}) (see Remark \ref{rmkoptimality}).
In particular for $\la\le 1/b$ we obtain $\cos x\le 1/2$ that is $x\ge\pi/3$ and this gives $q\le 2$ and then $q=2$.
We then have
$$
\pi\ge p\xi_0+2x>(p+2)\frac{\pi}3,
$$
which entails $p<1$ that is $p=0$ and hence $N=q=2$, which is absurd.

Hence $\ola$ is a triangle with the max number of sides which are at the same time tangent to $D_a$ and chord of $D_b$ and hence $|\lbb|\le 1$.
\end{proof}

We finally present the proof of Theorem \ref{teopxi0qx}.
\begin{proof}[Proof of Theorem \ref{teopxi0qx}]
We are going to prove that $|\Aa|\cdot|\Ab|=0$; we split the proof in the cases $b\lessgtr2a$.

Case $b\ge 2a$. Assume both $\Aa$ and $\Ab$ not empty. By the proof
of Lemma \ref{Lb<1} we have that $\ola$ is necessarily a triangle
hence we have $\Aab=\{\xi_0\}$, $\Aa=\{\theta\}$, $\Ab=\{x\}$, which
is not optimal as already noticed in the above proof since it can be translated. Hence either
$\Aa$ is empty and we get the triangles $T'$ with $\Ab=\{x\}$, or so
is $\Ab$ and we obtain $T''$ with $\Aa=\{\theta\}$, respectively
with $x=\theta=\pi-2\xi_0$.
\begin{center}
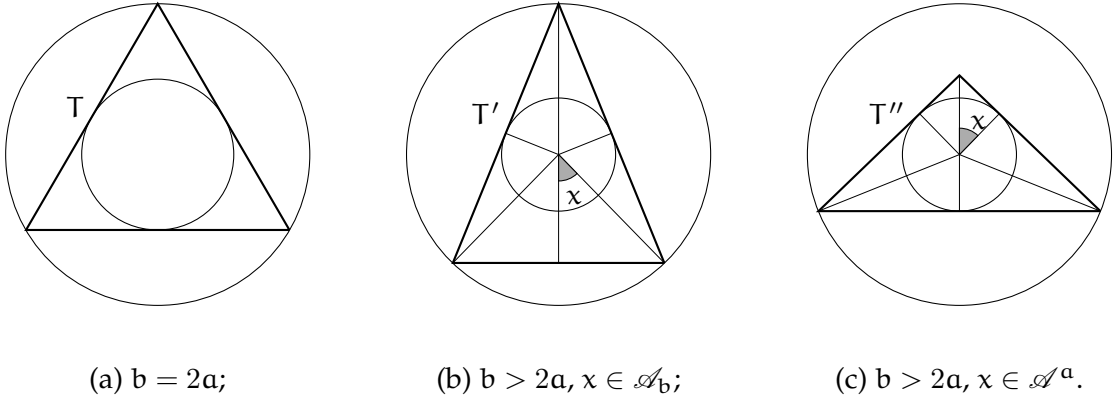
\begin{figure}[h]
\begin{tikzpicture}[x=0.50mm,y=0.50mm]
\begin{scope}[xshift=-150]
\draw[thin] (0,0) circle (40);
\draw[thin] (0,0) circle (20);
\coordinate (A) at (34.64,-20); \coordinate (B) at (-34.64,-20);
\draw[thick]  (A)--(0,40)--(B) --(A);
\draw (150:25) node{$T$};
%
%
\draw (0,-60) node{(a) $b=2a$;};
\end{scope}
\draw[thin] (0,0) circle (40);
\draw[thin] (0,0) circle (15);
\draw[thick] (-45.94:40)--(90:40)--(225.94:40)--(-45.94:40);
\draw (150:22) node{$T'$};
\draw[very thin] (0,0)--(-45.94:40);\draw[very thin] (0,0)--(90:40);
\draw[very thin] (0,0)--(225.94:40);\draw[very thin] (0,0)--(0,-29);\draw[very thin] (0,0)--(22.03:15);\draw[very thin] (0,0)--(157.97:15);
\draw[fill=gray!65] (0,0)--(270:7)  arc(270:314.06:7)-- cycle;
\draw (290:12)  node {$\tiny x$};
\draw (0,-60) node{(b) $b>2a$, $x\in\Ab$;};
\begin{scope}[xshift=150]
\draw[thin] (0,0) circle (40);
\draw[thin] (0,0) circle (15);
\draw[thick] (-22.03:40)--(202.03:40)--(90:21)--(-22.03:40);
\draw (150:22) node{$T''$};
\draw[very thin] (0,0)--(-22.03:40);\draw[very thin] (0,0)--(45.94:15);
\draw[very thin] (0,0)--(90:21);\draw[very thin] (0,0)--(134.06:15);\draw[very thin] (0,0)--(202.03:40);\draw[very thin] (0,0)--(-90:15);
\draw[fill=gray!65] (0,0) -- (45.94:7) arc (45.94:90:7)-- cycle;
\draw (60:10)  node {$\tiny x$};
\draw (0,-60) node{(c) $b>2a$, $x\in\Aa$.};
\end{scope}
\end{tikzpicture}
\caption{The triangles $T$, $T'$, $T''$, respectively.}\label{triangles}
\end{figure}
\end{center}
Otherwise both $\Aa$ and $\Ab$ are empty, hence $T$ is the regular
triangle of side $\sqrt{b^2-a^2}$. By explicit computation we obtain
that $\ola=T$ if $b=2a$ (notice  that in this case $T$ is the only
triangle which belongs to the class $\CC$), while for $b>2a$ we have
$$
\ola=
\begin{cases}
T' &\quad\text{for }\quad \frac 1b<\la\le \frac{2b}{(b-a)(b+2a)},\\
T''&\quad\text{for }\quad \frac{2b}{(b-a)(b+2a)}\le\la<\frac 2a.
\end{cases}
$$

Case $b< 2a$.
If both $\Aa$ and $\Ab$ are not empty, by Lemma \ref{Lb<1} and Theorem \ref{lemmaAaAb} it holds $\Aa=\{\theta\}$, $\Ab=\{x\}$.
Let $p=|\Aab|$.
By construction $p+2\ge 4$ hence
$$
\pi=p\xi_0+\theta+x>(p+1)x\ge 3x,
$$
which gives $x<\pi/3$, and hence $\cos x>1/2$. Consider the second
order optimality conditions (\ref{criticalcone}), (\ref{D2jla>0})
and let $d=(0,...,0,-k,k)\in\rn$ be a vector in the critical cone,
where the last two components correspond to the element of $\Aa$ and
$\Ab$ respectively. Computing the second derivatives of $\jla{\la}$
we get
$$
\langle D^2\jla{\la}(\ola)d;d\rangle=k^2\Big( 2a(a\la-2)\frac{\sin\theta}{\cos^3\theta} +2b\sin x(1-2b\la\cos x) \Big),
$$
which is negative as we showed that $\cos x >1/2$.
This is a contradiction.

Hence $\ola$ is either inscribed into $D_b$ or circumscribed to
$D_a$ with at most one side which does not belong to $\lab$. This
means that $\ola$ is a polygon composed by the maximum number  of
segment in $\lab$ which are completed by a last segment determined
by a central angle  which belongs either to $\Aa$ or to $\Ab$. More
precisely, $\ola$ can be represented by its central angles as the
set of $p$ copies of $\xi_0\in\Aab$ with a last angle $x=\pi-p\xi_0$
such that $x<\xi_0$. Denote by $\ola^a$ the set corresponding to
$x\in\Aa$ and $\ola^b$ that corresponding to $b\in\Ab$. 
\begin{center}
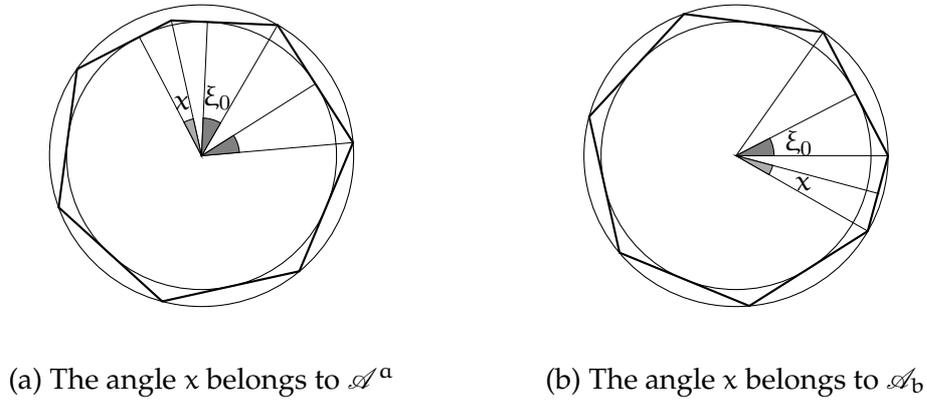
\begin{figure}[h]
\begin{tikzpicture}[x=0.50mm,y=0.50mm]
\draw[thin] (0,0) circle (40);
\draw[thin] (0,0) circle (35.5);
\draw[thick] (0:40)--(55:40)--(110:40)--(165:40)--(220:40)--(275:40)--(330:40)--(360:40);
\draw[very thin] (0,0)--(330:40);\draw[very thin] (0,0)--(0:40);
\draw[very thin] (0,0)--(345:39);\draw[very thin] (0,0)--(55:40); \draw[very thin] (0,0)--(27.5:35.5);
\draw[fill=gray] (0,0) -- (0:10) arc (0:27.5:10)-- cycle;
\draw (10,3.5)  node[right] {$\tiny\xi_0$};
\draw[fill=gray!65] (330:0) -- (330:10) arc (330:345:10)-- cycle;
\draw (330:15)  node[right] {$\tiny x$};
\draw (0,-60) node{(b) The angle $x$ belongs to $\Ab$};
\begin{scope}[xshift=-200]
\draw[thin] (0,0) circle (40);
\draw[thin] (0,0) circle (35.5);
\draw[thick] (145:40)--(200:40)--(255:40)--(310:40)--(5:40)--(60:40)--(102.5:36.75)--(145:40);
\draw[very thin] (0,0)--(102.5:36.75);\draw[very thin] (0,0)--(117.5:35.5);
\draw[very thin] (0,0)--(87.5:35.5);\draw[very thin] (0,0)--(60:40); \draw[very thin] (0,0)--(32.5:35.5);\draw[very thin] (0,0)--(5:40);
\draw[fill=gray!65] (0,0) -- (102.5:10) arc (102.5:117.5:10)-- cycle;
\draw (110:15)  node {$\tiny x$};
\draw[fill=gray] (87.5:0) -- (87.5:10) arc (87.5:60:10)-- cycle;
\draw[fill=gray] (5:0) -- (5:10) arc (5:32.5:10)-- cycle;
\draw (75:16)  node {$\tiny \xi_0$};
\draw (0,-60) node{(a) The angle $x$ belongs to $\Aa$};
\end{scope}
\end{tikzpicture}
\caption{The sets $\ola^a$ and $\ola^b$ in the proof of Theorem \ref{teopxi0qx}.}\label{olaAB}
\end{figure}
\end{center}
By a direct computation we get
$$
\jla{\la}(\ola^a)-\jla{\la}(\ola^b)=\frac{\sin x}{\cos x}(a-b\cos x)\big( \la(a+b\cos x) -2\big),
$$
and hence
$$
\ola=
\begin{cases}
\ola^b &\quad\text{for }\quad\frac 1b<\la\le \frac 2{b\cos x+a},\\
\ola^a &\quad\text{for }\quad\frac 2{b\cos x+a}\le \la<\frac 2a.
\end{cases}
$$
\end{proof}

\subsection{Analysis of small values of $\la$}
By Lemma \ref{Lavuoto} and Corollary \ref{corollaryPolyg} for
$1/2b<\la<2/(a+b)$ an optimal set is a polygon inscribed into $D_b$
with possible tangent sides to $D_a$. In particular by Lemma
\ref{Lb<1} there exists at least one chord which is tangent to the
interior ball, for $\la\ge\min\{1/2a,1/b\}$

The following proposition expresses the fact that if $\la$ is
sufficiently small (but sufficiently large to have a polygonal
solution), then optimal sets are strictly inscribed into $D_b$.
\begin{proposition}\label{labvuoto}
Let $\ola$ be an optimal set, with $1/2b<\la<1/(a+b)$. Then the classes $\lab$ and $\laa$ are empty.
\end{proposition}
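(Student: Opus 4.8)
The plan is to lean on the reduction already in place. Since $1/(2b)<\la<1/(a+b)$ forces $\la<2/(a+b)$, Lemma \ref{Lavuoto} gives $\laa=\emptyset$ immediately; combined with Corollary \ref{corollaryPolyg}, Theorem \ref{teopolyg} and Theorem \ref{teolaalablb}, any minimizer $\ola$ is a convex polygon all of whose sides are chords of $D_b$, i.e. $\ola$ is inscribed in $D_b$ and each side belongs to $\lab\cup\lbb$. It therefore remains only to prove $\lab=\emptyset$, and I would do this by contradiction: if some side of $\ola$ is tangent to $D_a$, I would produce a competitor in $\C$ with strictly smaller value of $\jla{\la}$.

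So suppose $AB$ is a side of $\ola$ with $A,B\in\bd D_b$ and $AB$ tangent to $D_a$; then its central angle at the origin is $2\xi_0$, where $\cos\xi_0=a/b$. The perturbation I would use is to insert a new vertex $B'\in\bd D_b$ on the minor arc $AB$, close to $A$, and to let $\oo'$ be the polygon obtained from $\ola$ by replacing the side $AB$ with the two chords $AB'$ and $B'B$; call their central angles $2\gamma_1$ and $2\gamma_2$, so $\gamma_1+\gamma_2=\xi_0$ and $\gamma_1\in(0,\xi_0)$ is the small parameter. For $\gamma_1$ small the vertices of $\oo'$ are still in cyclic order on $\bd D_b$, hence $\oo'$ is convex and inscribed in $D_b$; moreover the two new sides lie at distance $b\cos\gamma_1$ and $b\cos\gamma_2$ from the origin, both strictly larger than $b\cos\xi_0=a$, so $D_a\subseteq\oo'\subseteq D_b$ and $\oo'\in\C$ (in fact $\oo'\in\CC$, both new sides lying in $\lbb$). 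Since for a polygon inscribed in $D_b$ a side of central angle $2\gamma$ contributes $\phi(\gamma):=\frac{\la b^2}{2}\sin 2\gamma-2b\sin\gamma$ to $\jla{\la}$, and only the perturbed side changes,
\begin{equation*}
\jla{\la}(\oo')-\jla{\la}(\ola)=\phi(\gamma_1)+\phi(\xi_0-\gamma_1)-\phi(\xi_0)=\bigl(\phi(\gamma_1)+\phi(\xi_0-\gamma_1)\bigr)-\bigl(\phi(0)+\phi(\xi_0)\bigr),
\end{equation*}
using $\phi(0)=0$. This quantity vanishes at $\gamma_1=0$, and its derivative in $\gamma_1$ at $\gamma_1=0$ equals $\phi'(0)-\phi'(\xi_0)=2(b-a)(\la(a+b)-1)$, which is strictly negative because $a<b$ and $\la<1/(a+b)$. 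Hence $\jla{\la}(\oo')<\jla{\la}(\ola)$ for $\gamma_1>0$ small enough, contradicting the optimality of $\ola$; thus $\lab=\emptyset$, which together with $\laa=\emptyset$ is the claim.

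Once this perturbation is selected the rest is a one–variable computation, so the only genuine decision is the competitor: splitting the longest admissible chord — the one tangent to $D_a$ — into two shorter chords increases both the perimeter and the area, and the threshold at which the perimeter gain beats the area gain is exactly $\la=1/(a+b)$, which is precisely the hypothesis (and explains why it is sharp). The step I would take care to justify in full is that $\oo'$ stays admissible, i.e. that it is convex and still satisfies $D_a\subseteq\oo'\subseteq D_b$; this is what the smallness of $\gamma_1$ is for, and it is the only point where geometry rather than calculus enters.
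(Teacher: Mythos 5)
Your proof is correct and follows essentially the same route as the paper: $\laa=\emptyset$ comes from Lemma \ref{Lavuoto}, and $\lab=\emptyset$ comes from an outward perturbation of the tangent chord obtained by inserting new vertices on $\bd D_b$, with the threshold $\la=1/(a+b)$ emerging as the first-order balance between the area gain and the perimeter gain. The only cosmetic difference is that the paper uses a symmetric trapezoidal cut with two new vertices $P_\ep,Q_\ep$ and lets $\ep\to 0$, whereas you insert a single vertex near one endpoint and differentiate in the split angle $\gamma_1$; both computations reduce to the same identity $\tan(\xi_0/2)/\sqrt{b^2-a^2}=1/(a+b)$.
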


\begin{proof}
Let $\oo$ be a polygon inscribed into $D_b$; assume that there
exists a chord $PQ$ of $D_b$ which is tangent to $D_a$, that is
$PM\in\lab$ where $M$ is the middle point of $PQ$. As shown in
Figure \ref{trapezio} we consider the set $\oo^\ep$ obtained as a
perturbation of $\oo$ constructing two new points $P_\ep,Q_\ep$ on
$\bd D_b$, such that $\overline{PP_\ep}=\overline{QQ_\ep}=\ep$ (and
hence $Q_\ep P_\ep$ is parallel to $PQ$) with  $P_\ep
Q_\ep\cap\oo=\emptyset$. Let us denote by $\eta=\eta(\ep)$ the angle
between $PQ$ and $P_\ep P$.
\begin{center}
\begin{figure}[h]
\begin{tikzpicture}[x=0.6mm,y=0.6mm]
\draw[very thin] (0,0) circle (30);
\draw (-20,20) node [right] {$D_b$};
\draw (11,11) node {$D_a$};
\draw[very thin] (0,0) circle (10);
\draw (18,-3) node{\small$\oo$};
\draw[thick] (-27,-3)--(-28.28427125,-10) node [left] {$P$} -- (28.28427125,-10)--(20,5);   
\draw (31,-7) node {$Q$};
\draw (-28.28427125,-10)--(-22.360679775,-20) node [left]{$P_\ep$}--(22.360679775,-20)--(28.28427125,-10);
\draw (25,-23) node{$Q_\ep$};
\draw (20,-16) node{\small$\oo^\ep$};
\draw (0,-10) node[above]{$M$};
\draw[fill=gray!70] (-28.284,-10) -- (-22.284,-10) arc (0:-60:6) node [right]{$\eta$}-- cycle;
\draw[<->] (25,-20) -- (31,-10);
\draw (28,-15) node[right]{$\ep$};
\end{tikzpicture}
\caption{The construction of $\oo^\ep$: for $\frac 1{2b}<\la<\frac 1{a+b}$, $\Aab=\emptyset$.}\label{trapezio}
\end{figure}
\end{center}
Again we want to show that in fact $\jla{\la}(\oo^\ep)<\jla{\la}(\oo)$.
Consider
$$
\jla{\la}(\oo^\ep)-\jla{\la}(\oo) = \ep\sin\eta (2\sqrt{b^2-a^2}-\ep\cos\eta)\Big(\la-\frac{2\tan(\textstyle{\eta/2})}{2\sqrt{b^2-a^2}-\ep\cos\eta}\Big);
$$
and notice that
$$
\lim_{\ep\to 0}\frac{2\tan(\textstyle{\eta/2})}{2\sqrt{b^2-a^2}-\ep\cos\eta}=\frac 1{a+b},
$$
since $\lim_{\ep\to 0}\eta(\ep)=\xi_0$.
Hence, as $\la< 1/(a+b)$, there exists $\ep>0$ (and hence $\eta>0$) such that $\jla{\la}(\oo^\ep)-\jla{\la}(\oo)<0$.
\end{proof}

As already noticed for small values of $\la$ optimal polygons are
inscribed into $D_b$. In particular for $1/2b<\la<1/b$ either $\ola$
contains tangent sides  to $D_a$, or it is either regular or
``quasi-regular'', where quasi-regular means that it has all the
sides of equal length, except one. It would be interesting to
investigate when each of the cases arrives.

Now let us consider the case of quasi-regular polygons.
Notice that not for every values of $\la, a,b, N$ an optimal quasi-regular $N$-gone can be constructed in the class $\CC$.
In particular some estimates of the possible number of sides of an optimal polygon holds.
\begin{proposition}\label{pq}
Let $\ola$ be an optimal polygon, with $1/2b<\la\le 1/b$ and let $p=|\Aab|$, $q=|\Ab|$.
It holds
\begin{equation}\label{pq1}
p_0+1-p\le q\le \frac {\pi}{x_0}-p\frac{\xi_0}{x_0}+1,
\end{equation}
where $\xi_0$ is defined in (\ref{xi0}), $\cos x_0=\frac 1{2\la b}$ and $p_0=[\textstyle\frac{\pi}{\xi_0}]$.

In particular if $\Ab\supseteq\{x,y\}$ it also holds
$$
\frac{\pi-p\xi_0+\sqrt{(\pi-p\xi_0)^2-{\textstyle\frac 92} x_1}}{2x_1}+1\le q\le \frac{\pi}{x_1}-p\frac{\xi_0}{x_1}+1.
$$
\end{proposition}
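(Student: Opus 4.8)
The plan is to exploit the constraint $\sum_{x\in\Aab\cup\Aa\cup\Ab}x=\pi$ together with the optimality bounds on the angles in $\Ab$ obtained in Remark \ref{rmkoptimality} and Theorem \ref{lemmaAaAb}. Throughout we work in the regime $1/2b<\la\le 1/b$, where optimal polygons are inscribed in $D_b$ (Lemma \ref{Lavuoto}, Corollary \ref{corollaryPolyg}), so $\Aa=\emptyset$ and hence, by Theorem \ref{teolaalablb}, the class of angles consists of $p$ copies of $\xi_0$ and $q$ angles from $\Ab$. Since each such angle lies strictly between $0$ and $\xi_0$, the constraint reads $p\xi_0+\sum_{\eta_j\in\Ab}\eta_j=\pi$.

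First I would establish the two-sided bound (\ref{pq1}). For the \textbf{upper bound}: by Remark \ref{rmkoptimality}, as soon as $\Ab$ contains at least two copies of a common angle $x$ (or two distinct angles), the second-order optimality condition forces $\cos x\le 1/(2\la b)$, i.e. $x\ge x_0$; more precisely, Theorem \ref{lemmaAaAb}(\ref{xxy}) gives $\Ab=\{x,\dots,x\}$ or $\{x,\dots,x,y\}$ with $\cos x\le 1/(2\la b)$, so that at least $q-1$ of the angles in $\Ab$ are $\ge x_0$. (The case $q\le 1$ makes the inequality trivial after checking the numerics, since $p\le p_0$.) Therefore $\pi=p\xi_0+\sum\eta_j\ge p\xi_0+(q-1)x_0$, which rearranges exactly to $q\le \pi/x_0-p\,\xi_0/x_0+1$. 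For the \textbf{lower bound}: since $\Ab\subseteq(0,\xi_0)$ we have $\pi=p\xi_0+\sum\eta_j< p\xi_0+q\xi_0=(p+q)\xi_0$, hence $p+q>\pi/\xi_0\ge p_0$, giving $p+q\ge p_0+1$, i.e. $q\ge p_0+1-p$, which is the left-hand side of (\ref{pq1}).

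For the \textbf{sharper lower bound} when $\Ab\supseteq\{x,y\}$, the idea is to use that now \emph{exactly one} eigenvalue of $D^2\jla{\la}$ is negative (the one associated to $y$), so the refined inequality (\ref{ineqHPR}) from \cite[Corollary 4.6]{HPR} must hold; combined with the first-order relation $\cos x+\cos y=1/(b\la)$ from (\ref{cosxcosy}), this pins down $x$ and $y$ to explicit values — here $x$ equals the quantity I would call $x_1$ appearing in the statement — and in particular yields a definite value, not merely a bound, for $\cos x$. Feeding $x=x_1$ (and the one extra angle $y$) back into $p\xi_0+(q-1)x_1+y=\pi$, and using $0<y<x_1$ together with $\cos x_1+\cos y=1/(b\la)$ to control $y$ from above, produces a \emph{quadratic} inequality in $q$: roughly $(q-1)x_1<\pi-p\xi_0$ from below but also a reverse estimate because $y$ cannot be too small, and the $\tfrac92 x_1$ term is the discriminant-type correction coming from the quadratic expansion of $\cos y$ near its extremal value. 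Solving that quadratic for $q$ gives the displayed bracketed expression; the upper bound $q\le \pi/x_1-p\,\xi_0/x_1+1$ is again just $p\xi_0+(q-1)x_1\le\pi$.

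The main obstacle I anticipate is the sharper lower bound: one must carefully justify that in the presence of two distinct angles the Hessian has \emph{exactly one} negative eigenvalue (so that the HPR criterion (\ref{ineqHPR}) is available rather than vacuous), and then correctly extract from (\ref{ineqHPR}) and (\ref{cosxcosy}) the precise value of $x_1$ and the quantitative upper bound on $y$. The bookkeeping that turns "$y$ small but bounded" into the discriminant $\sqrt{(\pi-p\xi_0)^2-\tfrac92 x_1}$ is the delicate computational point; everything else is a direct consequence of the angle-sum constraint and the already-established structure of $\Ab$.
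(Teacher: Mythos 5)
Your handling of the two-sided bound (\ref{pq1}) is correct and essentially identical to the paper's: the lower bound comes from $\pi=p\xi_0+\sum\eta_j<(p+q)\xi_0$, and the upper bound from the second-order condition $x\ge x_0$ applied to at least $q-1$ of the angles of $\Ab$.

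The sharper bounds for $\Ab\supseteq\{x,y\}$, however, contain a genuine error. You assert that the first- and second-order conditions ``pin down $x$ and $y$ to explicit values'' with $x=x_1$. This is false: $x_1$ is defined by $\cos x_1=\frac 1{\la b}-1$, i.e.\ it is the abscissa at which the curve $\cos x+\cos y=\frac 1{\la b}$ meets the axis $y=0$, so substituting $x=x_1$ into that relation forces $y=0$, the degenerate configuration excluded by $y>0$. Your subsequent step (feeding $x=x_1$ into $p\xi_0+(q-1)x_1+y=\pi$ while also imposing $\cos x_1+\cos y=1/(b\la)$) is therefore inconsistent. What is actually needed is to show that the admissible $x$ ranges over the interval $[x_2,x_1)$, where $x_2$ is determined by the \emph{equality} case $\sin x_2=(q-1)\sin y_2$ of the HPR condition together with $\cos x_2+\cos y_2=\frac 1{\la b}$, and that the concave function $\psi_\la(x)=\phi_\la(x)-\pi+p\xi_0+(q-1)x$ increases up to $x_2$ and decreases afterwards, so that a solution of (\ref{quasireg}) exists iff $\psi_\la(x_2)\ge 0$ and $\psi_\la(x_1)\le 0$. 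The upper bound then follows from $\psi_\la(x_1)\le 0$ because $\phi_\la(x_1)=0$ (this is the rigorous version of your ``$p\xi_0+(q-1)x_1\le\pi$''). The lower bound requires solving (\ref{x2y2}) explicitly for $y_2$ and establishing the quantitative estimate $y_2\le\frac 9{8(q-1)}$; only then does $\psi_\la(x_2)\ge 0$, combined with $x_2<x_1$, yield $(q-1)^2x_1-(q-1)(\pi-p\xi_0)+\frac 98\ge 0$, whose larger root gives the displayed expression (note $4\cdot\frac 98=\frac 92$). Your sketch contains neither the identification of $x_2$ nor the bound on $y_2$, and these are exactly the non-trivial steps: a generic ``quadratic expansion of $\cos y$'' does not produce the constant $\frac 92$.
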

\begin{proof}
Notice that $p_0$ represents the maximum number of copies of the angle $\xi_0$ that a polygon in the class $\CC$ can have as central angle.
That is $p_0\xi_0\le \pi<(p_0+1)\xi_0$.
Hence the minimum number of sides is always at least $p_0$, and equality holds only in the case $p_0=\pi/\xi_0$.
In the general case $\pi/\xi_0\not\in\N$, it holds in fact $N\ge p_0+1$, that is
$$
q\ge p_0+1-p.
$$
In what follows we assume $p_0<\pi/\xi_0$, in order to treat a more general situation.

Notice that, by optimality conditions, if $\Ab$ contains a couple of
equal  angles $\{x,x\}$ or a couple of angles $\{x,y\}$, it holds $\cos x\le
1/2\la b$ (see Theorem \ref{lemmaAaAb} for the case
$\{x,y\}\subseteq\Ab$ and Remark \ref{rmkoptimality} for the case
$\{x,x\}\subseteq\Ab$). Hence if $q\ge 2$, and $\Ab$ has at least
$(q-1)$ copies of an angle $x$, it holds $x_0\le x\le \xi_0$ with
$\cos x_0=1/2\la b$.

Assume that $\Ab$ only contains $q$ copies of the same angle $x$, such that $p\xi_0+q x=\pi$; we have
$$
q\le \frac{\pi}{x_0}-p\frac{\xi_0}{x_0}.
$$
If $\Ab$ contains a couple of angles $\{x,y\}$, that is $\Ab=\{x,...,x,y\}$, we have $x>y$ with $p\xi_0+(q-1)x+y=\pi$, which gives
$$
q\le \frac{\pi}{x_0}-p\frac{\xi_0}{x_0}+1,
$$
and hence (\ref{pq1}) is proved.

Moreover in this case the set $\ola$ can be optimal only if it satisfies the optimality conditions which appears in Theorem \ref{lemmaAaAb}.
More precisely by Corollary 4.6 in \cite{HPR} (see (\ref{ineqHPR})) it must hold
$$
q-1\le -\frac{\mu_x}{\mu_y},
$$
where $\mu_x$ and $\mu_y$ are the eigenvalues of $D^2\jla{\la}(\oo)$ corresponding to the angles $x$ and $y$ respectively: $\mu_x= 2b\sin x(1-2\la b\cos x)$, $\mu_y=2b\sin y(1-2\la b\cos y)$. 
Indeed $\ola$ can be seen as an optimal polygon for the minimization  problem in the class of $(p+q)$-gones with $p$ fixed central angles equal to
$\xi_0$, and hence Corollary 4.6 in \cite{HPR} applies to the $q$ eigenvalues $\mu_x,...,\mu_x,\mu_y$.

We get the following necessary conditions for the optimality of the quasi-regular $N$-gone:
\begin{equation}\label{quasireg}
\begin{cases}
(q-1)x+y =\pi-p\xi_0,\\
\displaystyle{\cos x+\cos y =\frac 1{\la b}},\\
\sin x - (q-1)\sin y  \ge 0,\\
x-y >0.
\end{cases}
\end{equation}

Notice that this corresponds to find the intersections between the graph of the function
\begin{equation}\label{phila}
\phi_\la(x)=\arccos\Big(\frac 1{\la b}-\cos x\Big),
\end{equation}
and the straight line $y=\pi-(q-1)x-p\xi_0$, which belong to a certain subset of the first octant, as shown in Figure \ref{quasiregFigGen}.
\begin{center}
\begin{figure}[h]
\begin{tikzpicture}[x=4cm,y=1cm]
\shade[left color=gray,right color=white]  (0,0)--plot [domain=0:1.9] ({\x}, {(asin(0.5*sin(\x r))*pi/180)}) --(1.8,0) --cycle;
\draw (-0.2,0)--(1.8,0);
\draw (0,-0.5)--(0,3.5);
\draw (0,pi) -- (pi/2,0) node[near start,sloped, above]{\small$y=\pi-p\xi_0-(q-1)x$};
\draw[dashed] (0,0)--(pi/2,pi/2) node[near start, sloped, above]{\small$y=x$};
\draw[blue, thick] [domain=0.94:1.522] plot({\x}, {(acos(1.05-cos(\x r)))*pi/180});
\fill[blue] (1.51, 0.12) circle (2pt);
\fill[blue] (1.1, 0.94) circle (2pt);\draw[very thick, red] (1.1,0.94) node{\textsf{X}};
\draw[->, blue] (1.2,2) -- (1.2,0.9); \draw[blue] (1.4,1.8) node[above]{$\small\cos x+\cos y=\textstyle \frac 1{\la b}$};
\draw[blue!40!black!100!, thick] [domain=0:pi*0.6] plot({\x}, {(asin(0.5*sin(\x r))*pi/180)});
\draw[<-, blue!40!black!100!, thin] (1.8,0.6)--(1.8,0.9);\draw[blue!40!black!100!] (2,0.75) node[above]{\small$\sin x-(q-1)\sin y=0$};
\fill[black] (1.016,1.016) circle(1.2pt) node(B)[above]{\small$B$};
\draw[dashed, thin] (B)--(1.016,0) node[below]{\small$x_0$};
\fill[black] (1.39,0.515) circle(1.2pt) node(C)[above]{\small$C$};
\draw[dashed, very thin] (C)--(1.39,0) node[below]{\small$x_2$};
\fill[black] (1.52,0) circle(1.2pt) node[below]{\small$x_1$};
\draw[<-, thin] (1.525,-0.05)--(1.6,-0.2);\draw (1.64,-0.2) node{\small$A$};
\end{tikzpicture}
\caption{Conditions for the existence of a quasi-regular optimal polygon with $|\Aab|=p,|\Ab|=q$.}\label{quasiregFigGen}
\end{figure}
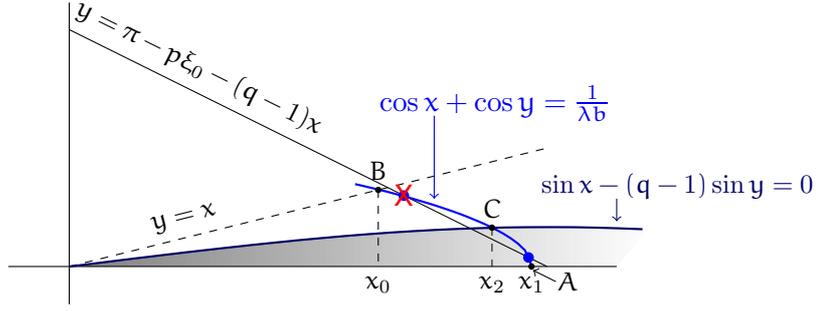
\end{center}
We denote by $A,B,C$ the points indicated in the figure: $A\equiv(x_1,0)$, $B\equiv(x_0,x_0)$, $C\equiv(x_2,y_2)$ such that
\begin{eqnarray}
&&\cos x_0 =  \frac 1{2\la b},\qquad \cos x_1 = \frac 1{\la b}-1, \label{x0x1} \\
&&\Big\{\cos x_2+\cos y_2= \frac 1{\la b},\qquad \sin x_2=(q-1)\sin y_2.\label{x2y2}
\end{eqnarray}

Hence we are interested in finding the zeros of the function
$$
\psi_\la(x)=\phi_\la(x)-\pi+p\xi_0+(q-1)x,
$$
which belong to the interval $[x_2,x_1)$.
Notice that the curve $\cos x+\cos y=1/\la b$ being concave (for $x>y>0$), so is the function $\psi_\la(x)$.
In particular $\psi_\la'(x)$ has a unique zero at the point $x_2$, since
$$
\phi'_\la(x_2)= -\frac{\sin x_2}{\Big(1-\big(\frac 1{\la b}-\cos x_2\big)^2 \Big)^{\frac 12}}=
-\frac{\sin x_2}{\big(1-\cos^2y_2\big)^{\frac 12}}=-\frac{\sin x_2}{\sin y_2}=-(q-1),
$$
and the function $\psi_\la$ is increasing for every $x\in(x_0,x_2)$ while it decreases for $x\in(x_2,x_1)$.
Hence there exists a zero for $\psi_\la$ in $[x_2,x_1)$ if and only if  $\psi_\la(x_2)\ge 0$ and $\psi_\la(x_1)\le 0$, that is
\begin{eqnarray}
&\phi_\la(x_2)-\pi+p\xi_0+(q-1)x_2 \ge& 0  \qquad\text{ and }\label{ai}\\
&\phi_\la(x_1)-\pi+p\xi_0+(q-1)x_1 \le& 0.\label{aii}
\end{eqnarray}
As  $\phi_\la(x_1)=0$, condition (\ref{aii}) yields
\begin{equation*}\label{upperN}
q\le \frac{\pi}{x_1}-p\frac{\xi_0}{x_1}+1,
\end{equation*}
which gives an upper bound to the number of possible chords (non-tangent to $D_a$) of an optimal polygon.

In order to find a lower bound for $q$ using (\ref{ai}), we need  to
estimate the value of $y_2=\phi_\la(x_2)$, which can explicitly be
found solving the system (\ref{x2y2}):
$$
y_2=\arccos\Big( \sqrt{1+\frac 1{b^2\la^2}\frac{(q-1)^2}{q^2(q-2)^2}} -\frac 1{b\la}\frac 1{q(q-2)} \Big).
$$
By algebraic computations one can prove that
$$
y_2\le\arccos\big(1-\frac 9{16 (q-1)^2} \big)\le \frac 32\sqrt{\frac 9{16 (q-1)^2}}=\frac 9{8(q-1)},
$$
and hence by (\ref{ai}) and the fact that $x_2<x_1$, we have
$$
\frac 9{8(q-1)}-\pi+p\xi_0+(q-1)x_1\ge y_2-\pi+p\xi_0+(q-1)x_2\ge 0,
$$
which implies
\begin{equation*}\label{lowerN}
q\ge \frac{\pi-p\xi_0+\sqrt{(\pi-p\xi_0)^2-{\textstyle\frac 92} x_1}}{2x_1}+1.
\end{equation*}
\end{proof}

\begin{corollary}\label{unicoN}
For $1/2b<\la\le 1/(a+b)$ there exists at most one $N\in\N$ such that an optimal polygon is a quasi-regular $N$-gone.
\end{corollary}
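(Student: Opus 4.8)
The plan is to reduce the statement to the elementary fact that an interval of length strictly less than $1$ contains at most one integer. First I would invoke Proposition \ref{labvuoto}: for $1/2b<\la<1/(a+b)$ an optimal polygon $\ola$ has $\lab=\laa=\emptyset$, hence $p:=|\Aab|=0$, $|\Aa|=0$, all its central angles lie in $\Ab$, and its number of sides is $N=|\Ab|=:q$. If moreover $\ola$ is quasi-regular, its central angles are $\{x,\dots,x,y\}$ with $x>y$, so $\Ab\supseteq\{x,y\}$ and the refined two-sided estimate of Proposition \ref{pq} applies with $p=0$, forcing
$$\frac{\pi+\sqrt{\pi^2-\frac92 x_1}}{2x_1}+1\ \le\ N\ \le\ \frac{\pi}{x_1}+1,\qquad \cos x_1=\frac1{\la b}-1.$$

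Next I would estimate the length of this admissible window, namely
$$\frac{\pi}{x_1}+1-\left(\frac{\pi+\sqrt{\pi^2-\frac92 x_1}}{2x_1}+1\right)=\frac{\pi-\sqrt{\pi^2-\frac92 x_1}}{2x_1},$$
and show it is strictly less than $1$, i.e. that $\pi-2x_1<\sqrt{\pi^2-\frac92 x_1}$. Here one uses that $\la\le 1/(a+b)$ gives $\frac1{\la b}\ge 1+\frac ab$, hence $\cos x_1\ge a/b>0$ and therefore $0<x_1\le\xi_0<\pi/2$. In particular $\pi-2x_1>0$, so the inequality may be squared, and after simplification it becomes $x_1<\pi-\frac98$, which holds since $x_1<\pi/2<\pi-\frac98$. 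Consequently the window above has length $<1$, so it contains at most one integer; since the number of sides of any optimal quasi-regular polygon for the given $\la$ is forced to equal that integer, at most one value of $N$ can occur, which is the claim for $1/2b<\la<1/(a+b)$. The endpoint $\la=1/(a+b)$ is then included by the same computation together with a routine limiting argument, Proposition \ref{labvuoto} (hence $p=0$) persisting in the limit so that the bound on $q$ is unchanged.

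The substantive work has already been done upstream: the competitors ruling out $\lab$ and $\laa$ in Proposition \ref{labvuoto}, and the first- and second-order optimality conditions producing the upper and lower bounds on $q$ in Proposition \ref{pq}. The only genuinely delicate point here is the final elementary estimate $x_1<\pi-9/8$ guaranteeing that the window for $N$ has length below $1$. I expect the main obstacle to be purely bookkeeping, namely making sure that for a quasi-regular polygon one indeed has $\Ab\supseteq\{x,y\}$ so that the \emph{refined} (two-sided) bound of Proposition \ref{pq}, rather than the coarser bound \eqref{pq1}, is available, and that $x_1$ lies in $(0,\xi_0]$ throughout the range of $\la$ under consideration.
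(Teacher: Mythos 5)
Your proof is correct and follows essentially the same route as the paper: Proposition \ref{labvuoto} gives $p=0$, Proposition \ref{pq} gives the two-sided bound on $q=N$, and one shows the admissible window has length strictly less than $1$, hence contains at most one integer. The only difference is in the final elementary estimate: the paper bounds the gap by $27/(16\pi)<1$ using $\sqrt{1-u}\ge 1-\tfrac34 u$, whereas you square $\pi-2x_1<\sqrt{\pi^2-\tfrac92 x_1}$ directly and reduce it to $x_1<\pi-\tfrac98$, which follows from $x_1<\pi/2$; both computations are valid.
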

\begin{proof}
By Proposition \ref{labvuoto} and Proposition \ref{pq} we have
$$
\frac{\pi+\sqrt{\pi^2-{\textstyle\frac 92} x_1}}{2x_1}+1\le q\le \frac{\pi}{x_1}+1.
$$
Consider the difference between the upper and lower bounds:
\begin{eqnarray*}
\frac{\pi}{x_1}+1-\Big(\frac{\pi+\sqrt{\pi^2-{\textstyle\frac 92} x_1}}{2x_1}+1\Big)
&=&    \frac{\pi}{2x_1}-\frac{\pi}{2x_1}\sqrt{1-{\textstyle\frac 9{2\pi^2} x_1}}\\
&\le&  \frac{\pi}{2x_1}-\frac{\pi}{2x_1}\Big(1-\frac {27}{8\pi^2}x_1\Big)=\frac{27}{16\; \pi}<1,
\end{eqnarray*}
where the first inequality follows from the fact that $\textstyle\sqrt{1-u}\ge 1-\frac 34 u$ for $u\le \frac 89$.
%
%
Hence there exists at most one value of $N$ such that a quasi-regular optimal $N$-gone exists.
\end{proof}

As shown in the above proposition, quasi-regular optimal $N$-gones exist only for at most a specific value of $N$.
Hence in most cases the solution will be a regular polygon. 
In the following proposition we analyze more in details this situation. 
Notice that by Corollary \ref{unicoN} and Proposition \ref{regularinscrit} below we can characterize the number of sides of an optimal polygon, for $\la$ close to $1/2b$. 
In particular the number of sides tends to infinity as $\la$ tends to $1/2b$. 
This shows that we have some kind of continuity of the solutions of Problem (\ref{PB}) when $\la\to 1/2b$ and this is in contrast with the situation for $\la\to 2/a$. 
Indeed, as explained in Theorem \ref{teopxi0qx}, for $\la > 2/a$ the optimal solution $\ola$ has the minimum number of sides while for $\la>2/a$ it is the ball $D_a$.
\begin{proposition}\label{regularinscrit}
Let $1/2b<\la<1/b$ and consider the minimum Problem (\ref{PB}) in the class
$$
\CC\cap\{\oo \text{ regular polygon }\}.
$$
There exists a decreasing sequence $\{\hat\beta_n\}$ which tends to $1/4$, such that for $\la b/2\in[\hat\beta_N,\hat\beta_{N-1}]$ the optimum is either the polygon $P_N$ (if $P_N\in\CC$) or the polygon $P_m$ with $m$ the minimum such that $P_m\in\CC$ (if $P_N\not\in\CC$).
\end{proposition}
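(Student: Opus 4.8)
The strategy is to reduce the problem to the one‑parameter family of regular $N$‑gons inscribed in $D_b$, and then to a single‑variable unimodal optimization.

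\textbf{Step 1 (reduction to inscribed regular polygons).} A regular polygon $\oo\in\CC$ has all sides of the same length, hence all its sides lie in a single one of the classes $\laa,\lbb,\lab$. Accordingly $\oo$ is either the regular $N$‑gon $P_N$ inscribed in $D_b$ (all sides chords of $D_b$, $\Ab=\{\pi/N,\dots,\pi/N\}$), or the regular $N$‑gon $Q_N$ circumscribed to $D_a$ (all sides tangent, $\Aa=\{\pi/N,\dots,\pi/N\}$), or — only when $\pi/\xi_0\in\N$ — the polygon with all sides in $\lab$, which is precisely $P_{\pi/\xi_0}=Q_{\pi/\xi_0}$. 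From (\ref{Jla}) one computes $\jla{\la}(P_N)=2Nb\sin(\pi/N)\big(\beta\cos(\pi/N)-1\big)$ and $\jla{\la}(Q_N)=aN\tan(\pi/N)(\la a-2)$, where $\beta:=\la b/2\in(1/4,1/2)$; moreover $P_N\in\CC\iff Q_N\in\CC\iff b\cos(\pi/N)\ge a\iff N\ge\pi/\xi_0$. A direct computation yields
$$\jla{\la}(P_N)-\jla{\la}(Q_N)=\frac{N\sin(\pi/N)}{\cos(\pi/N)}\big(b\cos\tfrac\pi N-a\big)\big(\la(b\cos\tfrac\pi N+a)-2\big),$$
which is $\le 0$ for all admissible $N$, since $b\cos(\pi/N)\ge a$ and $\la<1/b\le 2/(a+b)$ force $\la(b\cos(\pi/N)+a)<2$. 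Hence no circumscribed (nor the degenerate) regular polygon beats the inscribed one with the same number of sides, so it suffices to minimize $N\mapsto\jla{\la}(P_N)$ over integers $N\ge m_0:=\max\{3,\lceil\pi/\xi_0\rceil\}$; note that $P_{m_0}$ is exactly ``the polygon $P_m$ with $m$ minimum such that $P_m\in\CC$''.

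\textbf{Step 2 (unimodality of the continuous objective).} Put $x=\pi/N$ and $\phi(x)=x^{-1}\sin x(\beta\cos x-1)$, so $\jla{\la}(P_N)=2\pi b\,\phi(\pi/N)$ and $\phi'(x)=x^{-2}\psi(x)$ with $\psi(x)=\beta\big(x\cos 2x-\tfrac12\sin 2x\big)+\sin x-x\cos x$. The key observation is that $\psi(0)=0$ and
$$\psi'(x)=x\sin x\,(1-4\beta\cos x),$$
so on $(0,\pi/3]$ the factor $1-4\beta\cos x$ (since $1/4<\beta<1/2$) changes sign exactly once — at $x_0=\arccos(1/4\beta)$, the $x_0$ of (\ref{x0x1}) — from $-$ to $+$; together with $\psi(0^+)<0$ this gives that $\psi<0$ on $(0,x_0]$ and $\psi$ is strictly increasing on $[x_0,\pi/3]$, hence $\psi$ has at most one zero $x^\ast=x^\ast(\beta)$ in $(0,\pi/3]$. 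Consequently $N\mapsto\jla{\la}(P_N)$ is \emph{unimodal}: strictly decreasing then strictly increasing, with continuous minimizer $N^\ast(\la)=\pi/x^\ast(\la)$ when $x^\ast$ exists, and strictly increasing in $N$ (minimum at $m_0$) otherwise. Finally, at $\beta=1/4$ we have $\psi'(x)=x\sin x(1-\cos x)\ge 0$, whence $\psi>0$ on $(0,\pi/3]$; by continuity and monotonicity in $\beta$ this forces $x^\ast(\beta)\to 0^+$, i.e.\ $N^\ast(\la)\to+\infty$, as $\la\to(1/2b)^+$.

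\textbf{Step 3 (thresholds and the constraint).} For each $N$, $\jla{\la}(P_N)$ is affine in $\beta=\la b/2$ with $\beta$‑coefficient $2Nb^2\sin(\pi/N)\cos(\pi/N)$, which is strictly increasing in $N$; hence $\delta_N(\beta):=\jla{\la}(P_N)-\jla{\la}(P_{N+1})$ is affine and strictly decreasing in $\beta$, and positive at $\beta=1/4$ (there $\jla{\la}(P_N)$ is decreasing in $N$ by Step 2). Thus $\delta_N$ has a unique zero $\hat\beta_N>1/4$, and $P_{N+1}$ beats $P_N$ iff $\beta<\hat\beta_N$. Combined with unimodality, the integer minimizer of $N\mapsto\jla{\la}(P_N)$ over all $N\ge 3$ equals $N$ exactly for $\beta\in[\hat\beta_N,\hat\beta_{N-1}]$; these optimality intervals are disjoint and (again by unimodality) the minimizer index drops by exactly one at each breakpoint, so $\{\hat\beta_N\}$ is strictly decreasing, and $\hat\beta_N\downarrow 1/4$ because $N^\ast(\la)\to\infty$ as $\la\to(1/2b)^+$. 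Imposing now $P_N\in\CC$: on $\beta\in[\hat\beta_N,\hat\beta_{N-1}]$ the unconstrained optimum is $P_N$; if $P_N\in\CC$ (i.e.\ $N\ge m_0$) it is the optimum in the restricted class, while if $P_N\notin\CC$ then $N<m_0$ and, by unimodality, $n\mapsto\jla{\la}(P_n)$ is non‑decreasing on $\{n\ge m_0\}$, so the optimum is $P_{m_0}$, the polygon with fewest sides lying in $\CC$. This is the asserted dichotomy.

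\textbf{Main obstacle.} The crux is Step 2 — deducing unimodality of the discrete objective. The identity $\psi'(x)=x\sin x(1-4\beta\cos x)$ makes this essentially painless, but one must handle the two boundary regimes (for $\beta$ near $1/4$ the minimizer escapes to $N=\infty$, whereas for $\beta$ near $1/2$ it sticks at $m_0$) with care and verify $x^\ast(\beta)\to 0$; the remaining work, including the bookkeeping in Step 1 that every regular polygon in $\CC$ is one of $P_N$, $Q_N$ or the degenerate $\lab$‑polygon, is routine.
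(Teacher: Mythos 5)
Your proof is correct and its core is the same as the paper's: the same change of variables $x=\pi/N$, $\beta=\la b/2$, the same function (your $\psi$ is exactly the paper's $h(\beta,x)=-\beta(\sin x\cos x-x\cos 2x)+\sin x-x\cos x$), and the same thresholds $\hat\beta_N$ obtained by equating $\jla{\la}(P_N)$ and $\jla{\la}(P_{N+1})$, which are affine in $\beta$. You do, however, supply two pieces that the paper leaves implicit. First, your Step 1: the paper's proof silently restricts to \emph{inscribed} regular polygons, whereas the class $\CC\cap\{\oo\text{ regular polygon}\}$ also contains circumscribed ones, and Lemma \ref{Lavuoto} cannot be invoked directly since its perturbation leaves the class of regular polygons; your explicit computation $\jla{\la}(P_N)-\jla{\la}(Q_N)=\frac{N\sin(\pi/N)}{\cos(\pi/N)}(b\cos\frac\pi N-a)(\la(b\cos\frac\pi N+a)-2)\le 0$ for $\la<1/b$ closes this gap cleanly (it is the same algebra as in the proof of Lemma \ref{Lavuoto}, summed over all sides). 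Second, the paper deduces "the optimal number of sides is $n$ or $n+1$" from sign changes of $h$ at the endpoints $\pi/(n+1),\pi/n$, which tacitly assumes the map $N\mapsto\jla{\la}(P_N)$ is unimodal; your identity $\psi'(x)=x\sin x(1-4\beta\cos x)$, showing $\psi$ decreases then increases with $\psi(0)=0$ and hence has at most one zero in $(0,\pi/3]$, proves this unimodality and also gives the limit $\hat\beta_N\downarrow 1/4$ more transparently than the paper's appeal to the sequence $\beta_n$. (Two harmless slips: the $\beta$-coefficient of $\jla{\la}(P_N)$ is $2Nb\sin(\pi/N)\cos(\pi/N)$, not $2Nb^2\sin(\pi/N)\cos(\pi/N)$ --- only its monotonicity in $N$ matters; and strictly speaking a regular polygon in $\CC$ need not be centered at the origin, a point neither you nor the paper addresses, but this does not affect the intended statement.)
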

\begin{proof}
Let $1/2b<\la<1/b$ and let $P_N$ be a regular $N$-gone inscribed into $D_b$, we want to analyse the minimum of $\jla{\la}(P_N)$ with respect to $N$ and the value of $\la$, where
$$
\jla{\la}(P_N)=\pi b\Big( \la\;\frac b2\frac{\sin(2\pi/ N)}{2\pi/N}  -\frac{\sin(\pi/N)}{\pi/N} \Big).
$$
Let us denote $x=\pi/N$, and let $\beta=\la b/2$; with abuse of notation we will write $\jla{\la}(x)$ meaning $\jla{\la}(P_N)$.
Computing the derivatives of $\jla{\la}(x)$, we define $h(\beta,x)=x \,\jla{\la}'(x)$:
$$
h(\beta,x)= -\beta(\sin x\cos x-x\cos 2x)+\sin x-x\cos x.
$$
In order to study the minima of $\jla{\la}(x)$, we are interested in the zeros of $h$ for $1/4<\beta<1/2$, and $0<x\le \pi/3$.
We define the sequence $\beta_n$ such that $h({\beta_n},\pi/n)=0$, that is
\begin{equation}\label{betan}
\beta_n= \frac{\sin(\pi/n)-{\pi/n}\,\cos(\pi/n)}{\sin(\pi/n)\cos(\pi/n)-{\pi/n}\,\cos(2\pi/n)}.
\end{equation}
Notice that $\{\beta_n\}_{n\in\N}$ is a decreasing sequence which tends to $1/4$ as $n$ tends to infinity.

Consider $\beta_{n+1}<\beta<\beta_n$, then $h({\beta},\pi/n)$ is
positive while $h({\beta},\pi/(n+1))$ is negative hence $\jla{\la}$
has a minimum for $x\in[\pi/(n+1),\pi/n]$, which means that either
the optimal number of sides is $n$ or it is $(n+1)$. In particular
there exists $\hat\beta_n\in[\beta_{n+1},\beta_n)$ such that
$\jla{2\beta/b}(\pi/n)$ is minimum for $\beta\in
[\hat\beta_n,\hat\beta_{n-1}]$ where
$$
\hat\beta_n=\Big(\frac{\sin(\pi/n)}{\pi/n} -\frac{\sin(\pi/(n+1))}{\pi/(n+1)}\Big)\Big/
\Big( \frac{\sin(2\pi/n)}{2\pi/n}-\frac{\sin(2\pi/(n+1))}{2\pi/(n+1)}\Big),
$$
and $\jla{\hat\la_n}(P_n)=\jla{\hat\la_{n+1}}(P_{n+1})$ for $\hat\la_n=2\hat\beta_n/b$.

Hence let $\bar n$ be the minimum number of sides such that $P_{\bar n}$ belongs to the class $\CC$ and consider $1/2b<\la<1/b$.
Let $n\in\N$ be such that $2\la/b\in [\hat\beta_n,\hat\beta_{n-1}]$.
If $n\ge\bar n$, then $P_n$ minimizes $\jla{\la}$ among all regular polygons, if $n<\bar{n}$ then the optimal is $P_{\bar n}$.
\end{proof}
Notice that this result implies that in the case $b\ge 2a$, and $\hat\beta_3\le 2b\la\le 1/2$ the optimal regular polygon is the equilateral triangle.

More generally in the case $b\ge 2a$ and $1/(a+b)\le\la\le 1/b$, we are going to show that only triangle can be optimal sets.
\begin{proposition}\label{rem3.8}
Let $b> 2a$ and $\frac 1{a+b}\le\la\le \frac 1b$; then $\ola$ is a triangle.
\end{proposition}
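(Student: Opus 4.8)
The plan is to push Problem~(\ref{PB}), for these values of $\lambda$, into the finite dimensional setting of the class $\CC$, to bound the number of sides tangent to $D_a$, and then to rule out every polygon with more than three sides by combining the optimality conditions of Section~\ref{SecNumbersOfSides} with an explicit shape comparison.

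First I would record the reductions. Since $b>2a$ we have $\frac1{2b}<\frac1{a+b}\le\lambda\le\frac1b<\frac2a$ and also $\lambda\le\frac1b\le\frac2{a+b}$, so Corollary~\ref{corollaryPolyg} gives that $\ola$ is a polygon and Lemma~\ref{Lavuoto} gives $\laa=\emptyset$; together with Theorem~\ref{teolaalablb} this forces $\ola\in\CC$ to be inscribed in $D_b$. Writing $p=|\Aab|$ for the number of sides tangent to $D_a$ (each of half--central angle $\xi_0$, $\cos\xi_0=a/b$) and $q=|\Ab|$ for the number of chords of $D_b$ not tangent to $D_a$ (half--central angles $\eta_j\in(0,\xi_0)$), we have $p\xi_0+\sum_j\eta_j=\pi$ and the side count is $N=p+q\ge3$. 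As $\cos\xi_0=a/b<\frac12$ we get $\xi_0>\pi/3$, so $p\xi_0\le\pi$ forces $p\le2$; it remains to show $q\le3-p$.

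Next I would use the optimality conditions to cut down the candidates. If $q\ge2$, Theorem~\ref{lemmaAaAb} and Remark~\ref{rmkoptimality} show that $\Ab$ consists of $q-1$ copies of a single angle $x$ together with at most one strictly smaller angle $y$, with $\cos x\le\frac1{2\lambda b}$ forced by the second order condition; since $\lambda\ge\frac1{a+b}$ and $b>2a$ this gives $\cos x\le\frac{a+b}{2b}<\frac34$, hence $x>\arccos\frac34$. Substituting this bound, $\xi_0>\pi/3$ and $0<y\le x$ into $p\xi_0+(q-1)x+y=\pi$ leaves only finitely many configurations, all with $N\le5$ and with $N\le4$ as soon as $p\ge1$; moreover the subcase $p=2$, $\Ab=\{x,x\}$ is excluded at once, since then $x=\frac\pi2-\xi_0<\frac\pi6$ would give $\cos x>\frac{\sqrt3}2$, against $\cos x<\frac34$.

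Finally I would eliminate each remaining non--triangular candidate by a shape comparison: using the explicit formula~(\ref{Jla}) on $\CC$, the angle constraint and the relations $\cos x+\cos y=\frac1{b\lambda}$, $\cos x\le\frac1{2\lambda b}\le\cos y$, one computes $\jla{\la}(\ola)-\jla{\la}(T)$ for a triangle $T\in\CC$ obtained from $\ola$ by successively ``merging'' adjacent sides (replacing two chords of half--angles $\alpha,\beta$ by the single chord of half--angle $\alpha+\beta$), and one checks it is strictly positive for $\lambda\ge\frac1{a+b}$, $b>2a$. The operative point is that one merging step lowers $\jla{\la}$ exactly when $2\lambda b\cos\frac{\alpha+\beta}2\cos\frac\alpha2\cos\frac\beta2>1$, and the assumption $\lambda\ge1/(a+b)$ together with $b>2a$ (which gives $2\lambda b>\frac43$) is tailored to make this hold in the relevant ranges of angles; this contradicts the optimality of $\ola$, so $N=3$. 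The main obstacle is precisely this last step: the coarse angle estimates exclude polygons with many sides but cannot by themselves separate a quadrilateral (or pentagon) from a triangle, and one must verify the merging inequality carefully because its margin shrinks to zero as $b\to2a$ or $\lambda\to1/(a+b)$.
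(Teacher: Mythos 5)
Your reductions (Lemma \ref{Lavuoto} to kill $\Aa$, the bound $p=|\Aab|\le 2$ from $\xi_0>\pi/3$, and the second-order bound $\cos x\le \frac1{2\la b}<\frac34$ forcing $N\le 5$) reproduce the paper's first steps faithfully, and your merging criterion $2\la b\cos\frac{\alpha+\beta}2\cos\frac\alpha2\cos\frac\beta2>1$ is correctly computed. The gap is in the last step: the merging comparison cannot eliminate the surviving quadrilateral candidates. Two distinct things go wrong. First, the merged triangle need not belong to $\C$: replacing two chords of half-angles $\alpha,\beta$ by one of half-angle $\alpha+\beta$ requires $\alpha+\beta\le\xi_0$ for the new side not to cut $D_a$, and this fails, e.g., for the inscribed square ($\alpha+\beta=\pi/2>\xi_0$) and for quasi-regular quadrilaterals $\{x,x,x,y\}$ with $x$ near $0.8$ and $b$ close to $2a$ (where $\pi-2x>\xi_0$), so the comparison set is not an admissible competitor. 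Second, even ignoring admissibility, the merging inequality genuinely fails on these candidates: for the square the second-order condition forces $2\la b\le 1/\cos(\pi/4)=\sqrt2$, and $\sqrt2\,\cos\frac\pi4\cos^2\frac\pi8=\cos^2\frac\pi8\approx 0.854<1$, so merging \emph{increases} $\jla{\la}$; a numerical check on $\{x,x,x,y\}$ with $x=0.8$, $y=\pi-2.4$, $\cos x+\cos y=1/(\la b)\approx 1.435$ (consistent with $b\in(2a,2.3a]$ and $\la\in[\frac1{a+b},\frac1b]$) gives the same failure. So these candidates pass your first- and second-order (diagonal) filters and survive the merging step.

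The paper closes exactly this hole with two tools you do not invoke. For quasi-regular candidates $\Ab=\{x,\dots,x,y\}$ it uses the full second-order condition on the critical hyperplane, namely $\sin x\ge (q-1)\sin y$ from \cite[Corollary 4.6]{HPR} (the third line of (\ref{quasireg})), and shows graphically (Figure \ref{quasiregFig}) that for $1\le\frac1{\la b}\le\frac32$ the system has no solution with $q\ge 3$; your relations $\cos x+\cos y=\frac1{\la b}$ and $\cos x\le\frac1{2\la b}\le\cos y$ alone are not enough. For regular polygons it compares the square directly with the equilateral triangle via Proposition \ref{regularinscrit}: here $\la b/2>\frac13>\hat\beta_3=0.3286$, a margin of about $0.005$, which no merging argument can see since the equilateral triangle is not obtained from the square by merging sides. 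You would need to add both of these ingredients (or equivalent explicit comparisons against the correct admissible triangles $T$, $T'$, $T''$, $T'''$, $T'^v$) to make the plan work.
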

\begin{proof}
As $\la\le 1/b\le 2/(a+b)$, the class $\Aa$ is empty by Lemma \ref{Lavuoto}. 
We split  the proof in two parts, considering the two cases $\Aab=\emptyset$ and $\Aab\neq\emptyset$.

Assume $\ola$ have no tangent sides to $D_a$ (that is $\Aab=\emptyset$) and that $\ola$ is a quasi-regular polygon; hence condition (\ref{quasireg}) must hold true.
Consider the curve $\cos x+ \cos y = 1/b\la$; as $1/(a+b)\le\la\le1/b$ and $b> 2a$, it holds
$$
1\le\frac 1{\la b}\le \frac 32.
$$
We compare the graphs of the functions $y=\arccos({\textstyle \frac 1{b\la}}-\cos x)$ in the extreme cases $1/(b\la)=1$ and $1/(b\la)=3/2$.

Applying Proposition \ref{pq} we get either $N=3$ or $N=4$, that is: between quasi-regular polygons, only triangles and quadrilaterals can be optimal sets. 
Indeed for each $N\ge 5$ there is no intersection between the curve $\cos x+\cos y=1/2\la b$ and the line $y=\pi-(N-1)x$ as shown in Figure \ref{quasiregFig} (a).
In particular quadrilaterals are not optimal as the (non null) values of $x$ such that there exists a solution to
$$
\Big\{\cos x+ \cos y =\frac 1{b\la},\qquad y=\pi-(N-1)x,
$$
for $N=4$, does not satisfy $\sin x\ge(N-1)\sin y$, as shown in Figure \ref{quasiregFig} (b).
Hence the only possible quasi-regular optimal polygons with $\Aab=\emptyset$ are triangles of the form $\Ab=\{x,x,y\}$.
\begin{center}
\begin{figure}[h]
\begin{tikzpicture}[x=3cm,y=1.3cm]
\draw (-0.2,0)--(1.8,0);
\draw (0,-0.5)--(0,3.5);
\filldraw[draw=black,fill=gray!20]  plot[domain=1.05:1.5715] ({\x}, {(acos(1-cos(\x r)))*pi/180}) --
plot[domain=1.0475:0.72] ({\x},{(acos(1.5-cos(\x r)))*pi/180})--cycle;
\draw (0,pi) -- (pi/2,0) node[near start,sloped, above]{\tiny$N=3$};
\draw (0,pi) --(pi/3,0) node[midway,sloped, above]{\tiny$N=4$};
\draw (0,pi)--(pi/4,0) node[midway,sloped, below]{\tiny$N=5$};
\draw[dashed] (0,0)--(pi/2,pi/2) node[near start, sloped, above]{\tiny$y=x$};
\draw[thick] [domain=1:1.5715] plot({\x}, {(acos(1-cos(\x r)))*pi/180});
\draw[->] (1.3,1.7)node[above]{$\textstyle{\frac 1{b\la}=1}$} --(1.3,0.8);
\draw[thick] [domain=0.68:1.0475] plot({\x},{(acos(1.5-cos(\x r)))*pi/180});
\draw[->] (1,2.5)node[above]{$\textstyle{\frac 1{b\la}=\frac 32}$}--(1,0.5);
\draw (1,-1) node{(a)};
\begin{scope}[xshift=220]
\draw (-0.2,0)--(1.8,0);
\draw (0,-0.5)--(0,3.5);
\draw [domain=0:pi/2] plot (\x,{pi-\x*2});
\draw [domain=0:pi/3] plot (\x,{pi-\x*3});
\draw[dashed] (0,0)--(pi/2,pi/2);
\draw[blue, thick] [domain=0.95:1.5715] plot({\x}, {(acos(1-cos(\x r)))*pi/180});
\fill[blue] (1.047197551,1.047197551) circle (2pt);
\draw[blue, thick] [domain=0.9:1.4456] plot({\x}, {(acos(1.125-cos(\x r)))*pi/180});
\fill[blue] (1.318116072, 0.505360511) circle (2pt);
\draw[thick] [domain=0.68:1.0475] plot({\x},{(acos(1.5-cos(\x r)))*pi/180});
\fill (0.87,0.53) circle (2pt);
\draw[thick] [domain=0.72:1.1452] plot({\x},{(acos(1.414-cos(\x r)))*pi/180});
\fill (0.785398164,0.785398164) circle (2pt);
\draw[blue, thin] [domain=0:pi/2] plot({\x}, {(asin(0.5*sin(\x r))*pi/180)});
\draw[<-, blue, thin] (1.5,0.6)--(1.5,0.85)node[above]{\tiny$\sin x-2\sin y=0$};
\draw[thin] [domain=0:pi/2.7] plot({\x}, {(asin(0.333*sin(\x r))*pi/180)});
\draw[->, thin] (0.5,-0.2) node[right]{\tiny$\sin x-3\sin y=0$} --(0.5,0.1);
\draw (1,-1) node{(b)};
\end{scope}
\end{tikzpicture}
\caption{Conditions for the existence of a quasi-regular optimal polygon with $\Aab=\emptyset$.
Case $b\ge 2a$,  $\textstyle \frac 1{a+b}\le\la\le\frac 1b$.}\label{quasiregFig}
\end{figure}
\end{center}
Consider now the case of a regular $N$-gone $P_N$; it holds
$$
\jla{\la}(P_N)= 2b\,N\sin\textstyle{\frac{\pi}N}\Big( \la\frac b2\cos\textstyle{\frac{\pi}N}-1 \Big).
$$
Notice that, as $1/(a+b)\le \la\le 1/b$ with $b> 2a$, we have $\la b/2\in(1/3,1/2)$ and hence Proposition \ref{regularinscrit} guarantees $N=3$.


Hence if $\Aab$ is empty necessarily $\ola$ is a triangle; either equilateral or isosceles.

Suppose now $\Aab$ to be not empty; as $b> 2a$ it holds $|\Aab|=p\le 2$ and  Proposition \ref{pq} guarantees that $\ola$ is either a triangle or a quadrilateral. 
We are going to show that in fact this latter cannot arrive. 
Assume $\Ab\supseteq\{x,y\}$ with $x>y$ and let $q=|\Ab|\ge 2$. 
By the first order optimality conditions we have
\begin{equation}\label{1pxi0qx}
\begin{cases}
y=\pi-p\xi_0-(q-1)x,\\
x>y\\
\cos x+\cos y=\frac 1{\la b},
\end{cases}
\end{equation}
where it holds $1\le{\textstyle\frac 1{\la b}\le \frac 32}$ and $\textstyle\frac {\pi}3<\xi_0<\frac{\pi}2$.

Notice that the constant term and the director coefficient of the line in (\ref{1pxi0qx}) decreases with respect to $p$ and $q$, respectively.
Hence if (\ref{1pxi0qx}) admits no solution for some $\bar p, \bar q$, then the same will arrive for every $p\ge \bar p, q\ge \bar q$.
\begin{center}
\begin{figure}[h]
\begin{tikzpicture}[x=3cm,y=2cm]
\draw (-0.2,0)--(2.3,0);
\draw (0,-0.05)--(0,2.2);
\filldraw[draw=black,fill=gray!20]  plot[domain=1.05:1.5715] ({\x}, {(acos(1-cos(\x r)))*pi/180}) --
plot[domain=1.0475:0.72] ({\x},{(acos(1.5-cos(\x r)))*pi/180})--cycle;
\draw (0,pi*0.66) node[left]{$\frac{2 \pi}3$} -- (0.66*pi,0) node[near start,sloped, above]{\tiny$\textstyle{\xi_0=\frac{\pi}3}$};
\draw (0,pi*0.5) node[left]{$\frac{\pi}2$} -- (0.5*pi,0) node[near start,sloped, above]{\tiny$\textstyle{ \xi_0=\frac{\pi}2}$};
\draw (pi*0.33,0) node[below]{$\scriptstyle \frac {\pi}3$};
\draw[very thin, dashed] (0.72,0) node[below]{$\scriptstyle x_0$} --(0.72,0.72)--(0,0.72) node[left]{$\scriptstyle x_0$};
%
%
\draw[dashed] (0,0)--(2,2) node[near end, sloped, above]{\tiny$y=x$};
\draw[thick] [domain=1:1.5715] plot({\x}, {(acos(1-cos(\x r)))*pi/180});
\draw[thick] [domain=0.68:1.0475] plot({\x},{(acos(1.5-cos(\x r)))*pi/180});
\draw (1,-1) node{(a) Case $p=1, q=2$.};
\begin{scope}[xshift=220]
\draw (-0.2,0)--(2.3,0);
\draw (0,-0.05)--(0,2.2);
\filldraw[draw=black,fill=gray!20]  plot[domain=1.05:1.5715] ({\x}, {(acos(1-cos(\x r)))*pi/180}) --
plot[domain=1.0475:0.72] ({\x},{(acos(1.5-cos(\x r)))*pi/180})--cycle;
\draw (0,pi*0.66) node[left]{$\frac{3\pi}2$} --(pi*0.33,0) node[near start,sloped, above]{\tiny$\scriptstyle{p=1, q=3}$};
\draw[blue!40!black!100!, thin] (0,pi*0.33)-- (0.33*pi,0) node[near start,sloped, below]{\tiny$\scriptstyle{p=2, q=2}$};
\draw (0,pi*0.33) node[left]{$\frac{\pi}3$};
\draw[dashed] (0,0)--(2,2) node[near end, sloped, above]{\tiny$y=x$};
\draw[thick] [domain=1:1.5715] plot({\x}, {(acos(1-cos(\x r)))*pi/180});
\draw[thick] [domain=0.68:1.0475] plot({\x},{(acos(1.5-cos(\x r)))*pi/180});
\draw (1,-1) node{(b) Case $p=1, q=3$ and $p=2, q=2$.};
\end{scope}
\end{tikzpicture}
\caption{Conditions for the existence of an optimal polygon with
$\Aab\neq\emptyset$, $\Ab\supseteq\{x,y\}$. Case $b> 2a$,  $\textstyle \frac 1{a+b}\le\la\le\frac 1b$.}\label{quasiregFigQ}
\end{figure}
\end{center}
Consider the case $p=q=2$, shown in Figure \ref{quasiregFigQ} (b).
Notice that, the line $y=\pi-2\xi_0-x$ never intersects the curve $\cos x+\cos y=\frac 32$ for $x>y>0$ (and hence it never intersects $\cos x+\cos y=1/\la b$ neither). 
Indeed, thanks to the concavity of the function $\phi_{\scriptstyle\frac 2{3b}}(x)=\arccos(3/2-\cos x)$, the curve $\cos x+\cos y= 3/2$ for $x>y>0$ stays above the line through the points $(\pi/3,0)$, $(x_0,x_0)$, where $x_0=\arccos 3/4$, and this latter stays above the line $y=\pi-2\xi_0-x$ for every $y>0$. Hence there is no solution to (\ref{1pxi0qx}) for $p=q=2$. 
The same arrives for $p=1,q=3$ as shown again in Figure \ref{quasiregFigQ} (b). 
This implies that the only possible case is $p=1, q=2$, which corresponds to an isosceles triangle whose central angles are $\{\xi_0,z,z\}$, represented in Figure \ref{quasiregFigQ} (a).

Assume now that $\Ab$ only contains copies of the same angle $x$, with $|\Ab|=q\ge 2$ and $p\xi_0+q x=\pi$.
By the second order optimality conditions (see Remark \ref{rmkoptimality}), and the fact that $\la\ge 1/(a+b)\ge 2/3b$, we have
$$
\cos x\le \frac 1{2\la b}\le \frac{a+b}{2b}\le \frac 34,
$$
that is $x\ge u_0\ge x_0$, where $u_0$ is such that $\cos u_0=(a+b)/2b$.
Hence we have
\begin{equation}\label{2pxi0qx}
3-p\le q\le \frac {\pi}{u_0}-p\frac{\xi_0}{u_0}\le \frac {\pi (3-p)}{3\,u_0},
\end{equation}
where $p=1,2$ by construction, as $b> 2a$ and $\xi_0\ge \pi/3$.
Let us analyse these cases separately; notice that $u_0\ge x_0 =\arccos(3/4)\ge 0.72$.

For $p=1$ we obtain $2\le q\le 2.9$, which implies that the only possible polygon with $\Aab=\{\xi_0\}$ is the triangle with $\Ab=\{x,x\}$. 
For $p=2$ condition (\ref{2pxi0qx}) reads as $1\le q\le 1.44$ which gives  $q=1$ and hence again the only possibility is a triangle, which can be identified by its central angles as $\{\xi_0,\xi_0,z\}$.

\bigskip

Hence the optimal polygons are triangles, in particular they are of the form:
\begin{eqnarray*}
T=\{\pi/3,\pi/3,\pi/3\},\quad T'=\{x,x,y\},\qquad  T''=\{\xi_0,z,z\},\quad T'''=\{\xi_0,u,v\}\quad T'^{v}=\{\xi_0,\xi_0,w\},
\end{eqnarray*}
where the polygons are indicated using their central angles and
$z=\frac{\pi-\xi_0}2$, $w=\pi-2\xi_0$ are fixed. The values of $x,y$
and $u,v$ are given accordingly to Theorem \ref{lemmaAaAb}. It is
possible to simply compare these five kind of triangles by splitting them in
two (non disjoint!) classes:
\begin{itemize}
  \item the class of triangles with at least one central angle
$\xi_0$;
  \item the class of isosceles triangle.
\end{itemize}

Let us consider first the class of triangles with at least one central angle $\xi_0$. 
All of them can be represented as triangles whose central angles are $\{\xi_0,u,\pi-\xi_0-u\}$ with $u\in(\textstyle \frac{\pi}{2}-\frac{\xi_0}{2}, \xi_0)$. 
Notice that the limit cases $u=\textstyle \frac{\pi}{2}-\frac{\xi_0}{2}$ and $u=\xi_0$ correspond to the triangles $T''$ and $T'^v$ respectively.
Writing down the functional $\jla{\la}$ as a function of $u$, we get
three different optimal triangles depending on the value of $\la$:
\begin{eqnarray*}
T''=\{\textstyle\xi_0,\frac{\pi}2-\frac{\xi_0}2,\frac{\pi}2-\frac{\xi_0}2\}&\qquad\text{ for }& \textstyle \frac 1{a+b}\le  \la \le \frac{2b^2}{(b-a)(b+2a)},\\
T'''=\{\xi_0,\bar u,\pi-\xi_0-\bar u\}&\qquad{ for }& \textstyle \frac 1{\sqrt{2b} \sqrt{b-a}}\le \la  \le\frac{2b^2}{(b-a)(b+2a)},\\
T'^v=\{\xi_0,\xi_0,\pi-2\xi_0\}&\qquad\text{ for }& \textstyle \frac{2b^2}{(b-a)(b+2a)}\le  \la \le \frac 1b.
\end{eqnarray*}
where $\bar u$ is such that $\textstyle \sin(\bar u+\frac{\xi_0}2)(2\la b\sin{\frac{\xi_0}2)}=1$.
Hence there exists only one possible optimal triangle of the type $T'''$ corresponding to $u=\bar u$.
%
%

On the other hand, in the class of isosceles triangles determined by their central angles $\{x,x,y\}$, with $x\in[\frac {\pi}3,\xi_0]$, we have seen that there exists at most one triangle of type $T'$ which can be optimal. 
More precisely the functional $\jla{\la}$ is an increasing function of $x$ if $\la\le \frac 8{9b}$ or if $2a\le b\le 4a$ for every $\la$ and hence in these cases the only possible optimal isosceles triangles is $T'^v$. 
In the case $b> 4a$ with $\frac 8{9b}<\la\le\frac 1b$ there could exist a unique optimal triangle $T'$, corresponding to the unique possible point $\xx$ of
local minimum for $\jla{\la}$:
$$
\cos \xx= \frac{1+\sqrt{9-8\la b}}4.
$$

Hence for each $\frac 1{a+b}\le\la\le\frac 1b$ the solution to Problem  (\ref{PB}) is a triangle and the comparison between the two above classes yields the precise optimal one. 
Let us remark that, using a straightforward but tedious calculation, it is possible to prove that the optimal triangle is always one of the following: $T$, $T'$ with $x=\xx$ or $T'^v$.
\end{proof}


\section{An example}\label{examples}
Let us consider in detail an example to explain how the previous results allow us to easily get any solution of the problem for any value of the parameter $\la$. 
We choose here to fix $a=1,b=3$. 
Then $\xi_0=\arccos(a/b)\simeq 1.2310$. 

The cases $\la> \frac{2}{a}=2$ and $\la\leq \frac{1}{2b}=\frac{1}{6}$  are covered by Theorem \ref{teoDaDb} and the solutions are respectively $D_a$ and $D_b$.

For $\la=2$, as explained in Remark \ref{la=2/a}, any polygon circumscribed to $D_a$ and any combination of sides tangent to $D_a$ and arcs of the circle $D_a$ solves the problem.

Let us consider the case $1/2b<\la<2/a$.
First we want to apply Theorem \ref{teopxi0qx}. 
Since $\xi_0\simeq 1.2310$, we have $p=2$ and $x=\pi-2\xi_0\simeq 0.6797$.
The critical value of $\la$ which is equal to $2/(b\cos x +a)$ equals
$$
\tilde{\la}=\frac{2}{1-3\cos 2\xi_0}\,=\frac{2b}{b^2+ab-2a^2}\,=0.6\,.
$$
Therefore, for $\la\geq 0.6$, the optimal solution is the isosceles triangle  circumscribed to $D_a$ while for $1/3<\la\leq 0.6$ the optimal solution is the isosceles triangle inscribed into $D_b$, see Table \ref{example1,3}.

Now for $\la$ between $0.25=1/(a+b)$ and $1/b$, we use the analysis done in Proposition \ref{rem3.8} and the comparison between all triangles. This shows that the isosceles triangle inscribed into $D_b$ (and defined by its three angles $\xi_0,\xi_0,\pi-2\xi_0$ remains the optimal domain for $\la\in (0.308, 1/3)$ while the equilateral triangle becomes the optimal domain for $\la \in (0.25,0.3080)$.

For $\la < 1/(a+b)=0.25$, according to Proposition \ref{labvuoto},
we know that the optimal domain is inscribed in $D_b$ (and does not
touch $D_a$). Moreover, by Proposition \ref{regularinscrit}, we are
able to compare all regular polygons. More precisely, the following
table shows the values of $\la$ for which we switch from the regular
$N$-gone to the regular $(N+1)$-gone (e.g. we switch from the
equilateral triangle to the square for $\la\leq 0.2191$).

\begin{center}
    \begin{tabular}{c|c|c|c|c|c|c|c|c}
   $\la$ & 0.2191 & 0.1951 & 0.1847 & 0.1792 & 0.1759 & 0.1738 & 0.1723 & 0.1713\\ \hline
       N &  3     & 4      & 5      & 6      & 7      & 8      & 9      & 10
   \end{tabular}
\end{center}

Now we have seen in Theorem \ref{lemmaAaAb} that the only other possible optimal domain is a {quasi-regular} polygon with $N-1$ angles $x$ and one angle $y=\pi-(N-1)x$. 
Moreover, Proposition \ref{unicoN} shows that there exists at most one possible value of $N$ for such a quasi-regular polygon (and we have explicit bounds for this $N$), therefore the numerical study is easy. 
In our case, it turns out that we are able to find such quasi-regular polygons only twice (for two small intervals):
\begin{itemize}
  \item If $\la \in (0.21874;0.22222)$ the optimal domain is a
  quasi-regular quadrilateral.
  \item If $\la \in (0.19506;0.19525)$ the optimal domain is a
  quasi-regular pentagon (with a very small angle $y$, thus it is not
  easy to recognize a pentagon in the corresponding Figure of
  Table \ref{example1,3}).
\end{itemize}
For the other values of $\la$, the optimal domain is the regular
$N$-gone and we just have to follow the Table in the Appendix (Section \ref{tableBn}). Thus, we
have represented the solutions in Table \ref{example1,3} only up to
the regular hexagon. Let us remark that, in this table, the values
of the angles $x$ and $y$ for the quasi-regular polygons are given
as an example for one choice of $\la$.

{\scriptsize
\begin{table}[h]
\begin{tabular}{|c|l|c|c|c|c|c|}\hline
Interval for & Optimal & \multicolumn{3}{c|}{Class of Angles} & & \\
  $\lambda$         &  Solution     & $\Aab$        & $\Aa$       & $\Ab$       & Figure       & Area \\ \hline
  $(2;+\infty)$     &  disk $D_a$   & $\emptyset$  & $\emptyset$  & $\emptyset$ & \Da          & $\pi$ \\ \hline
  $(0.6;2)$         & \isotri    & $1.2310\times 2$ & 0.6797   & $\emptyset$ & \isoscUno    & $6.4650$ \\ \hline
  $(0.3080;0.6)$    & \isotri    & $1.2310\times 2$ & $\emptyset$ & 0.6797   &\isoscDue    & $10.0566$ \\ \hline
  $(0.2222;0.3080)$ & \equitri   & $\emptyset$  & $\emptyset$ & $\frac{\pi}3\times 3$&\equi & $11.6913$ \\ \hline
  $(0.2187;0.2222)$ & \qrquadr & $\emptyset$  & $\emptyset$ & \trexy & \quadriirreg & 13.0245 \\ \hline
  $(0.19525;0.2187)$& square        & $\emptyset$  & $\emptyset$ & $\frac{\pi}4\times 4$&\squareFigure & 18 \\  \hline
  $(0.19506;0.19525)$& \qrpent & $\emptyset$  & $\emptyset$ & \begin{tabular}{r}$4\times x=0.7829$\\$y=0.0098$\end{tabular} & \pentairregUno &  18.0879 \\ \hline
  $(0.1847;0.19506)$& \regpent    & $\emptyset$  & $\emptyset$ & $\frac{\pi}5\times 5$&\pentag & 21.3988 \\ \hline
  $(0.1792; 0.1847)$& \regex    & $\emptyset$  & $\emptyset$ & $\frac{\pi}6\times 6$&\esag  &   23.3827    \\ \hline
  (2$\hat{\beta}_N/3;2\hat{\beta}_{N-1}/3$)& \regNgon & $\emptyset$ & $\emptyset$ & $\frac{\pi}N\times N$ & \vdots & \vdots\\ \hline
  $(0;1/6)$         &  disk $D_b$   & $\emptyset$  & $\emptyset$ & $\emptyset$  &\Db            & $9\pi$  \\ \hline
\end{tabular}
\caption{Optimal sets for $a=1,b=3$, $0\le\la\le+\infty$.}\label{example1,3}
\end{table}
}
\section{Some related problems}

In this section we begin by investigating the same problem when we
remove one of the unilateral constraint $D_a\subset\Omega$ or
$\Omega\subset D_b$. We show that the previous study allows to
handle also these cases. Then, choosing particular values for the
parameter $\la$, we are able to recover a classical inequality due
to Bonnesen and Fenchel involving the area, the perimeter and the
inradius. Then we recover another one due to J. Favard which involves the area, the perimeter and the circumradius. 
We are also able to find a refinement of such inequality for large perimeter. 
We close this section with a discussion about the problem of maximizing perimeter with a volume constraint in the class $\C$.

\subsection{Variation of constraints}
It is interesting to investigate Problem (\ref{PB}) with different constraints. 
In particular it is often useful to consider convex sets which either contain a common fixed ball or which are contained
in it. This corresponds to consider the class of convex sets with
not too small inradius, or on the opposite side, the class of not
too large convex sets.

\subsubsection{Analysis of convex sets with not too small inradius}
For a fixed positive real number $a$ we define the class $\inr{a}$ as
the class of convex sets which contain the ball $D_a$ and we consider the problem
\begin{equation}\label{PBa}
\min_{\oo\in\inr{a}} \jla{\la}(\oo),
\end{equation}
where $\jla{\la}$ is defined as in (\ref{Jla1}).

Notice that not for every values of $\la$ a solution exists. 
Indeed for small values of $\la$ the perimeter has  in fact the heaviest weight, and it is not bounded. More precisely, solutions to (\ref{PBa}) can be seen as limit of solutions to Problem (\ref{PB}) in the class $\C$ for $b$ which tends to infinity. 
Hence for $0\le\la<\frac 2a$ a possible solution should be the limit of the triangle $T''$ in Figures \ref{triangles} (c). 
However $\lim_{b\to\infty}\jla{\la}(T'')=-\infty$ and hence a minimum does not exists.

More generally, as for values of $\la\ge \frac 2a$  solutions to (\ref{PB}) do not depend on the exterior ball $D_b$, they solve Problem (\ref{PBa}) as well. 
Indeed let $\ola$ be a solution to (\ref{PBa}); then either $\ola$ is contained in a ball $D_b$ or it is a limit of a sequence $\{\oo_n\}$ with $\oo_n\subseteq D_{b_n}$ for some $b_n$, since otherwise the functional could not be defined.
Hence we can apply the analysis of Problem (\ref{PB}) and we get the following.
\begin{proposition}
For $\la<\frac 2a$ there is no solution to Problem (\ref{PBa})  while for $\la\ge 2/a$ solutions exist and they coincide with the corresponding solutions to Problem (\ref{PB}). 
More precisely for $\la=2/a$ there exist an infinite number of solutions, which are circumscribed figures composed by arcs of $D_a$ and tangent segment, while for $\la> \frac 2a$ the ball $D_a$ is the unique solution.
\end{proposition}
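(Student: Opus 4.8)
The plan is to split the argument according to the three regimes $\la<2/a$, $\la=2/a$, $\la>2/a$, reducing the claim in each case to a sharp comparison against explicit competitors, and in the last two cases to the already-proved results for Problem~(\ref{PB}). For $\la<2/a$ I would show that $\inf_{\oo\in\inr a}\jla{\la}(\oo)=-\infty$, so that no minimizer can exist. As competitors I take triangles $T$ circumscribed to $D_a$ (all three sides tangent to $D_a$) that degenerate, e.g. with one angle tending to $0$; these are the $b\to+\infty$ limits of the triangle $T''$ of Figure~\ref{triangles}\,(c). Each such $T$ is convex, contains $D_a$, and, having incircle $D_a$, satisfies $|T|=\tfrac a2\,P(T)$, so that $\jla{\la}(T)=P(T)\bigl(\tfrac{\la a}{2}-1\bigr)$. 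Since $\la a/2<1$ while $P(T)\to+\infty$ along the degeneration, $\jla{\la}(T)\to-\infty$, which proves the first assertion.

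For $\la=2/a$ I would establish the sharp inequality
$$
\jla{2/a}(\oo)=\tfrac 2a\,|\oo|-P(\oo)\ \ge\ 0\qquad\text{for every convex }\oo\supseteq D_a,
$$
with equality precisely for the circumscribed figures of Remark~\ref{la=2/a}. Writing $h=h_\oo$ for the support function and $dS_\oo=(h''+h)\,d\theta\ge 0$ for the (nonnegative, by convexity) surface area measure, one has $P(\oo)=\int_0^{2\pi}dS_\oo$ and $|\oo|=\tfrac12\int_0^{2\pi}h\,dS_\oo$, hence $|\oo|-\tfrac a2 P(\oo)=\tfrac12\int_0^{2\pi}(h-a)\,dS_\oo\ge 0$, because $D_a\subseteq\oo$ forces $h\ge a$. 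Equality holds if and only if $h=a$ holds $S_\oo$-almost everywhere, i.e. if and only if every supporting line of $\oo$ realized by $\bd\oo$ is tangent to $D_a$; these are exactly the convex sets whose boundary consists of arcs of $\bd D_a$ and segments tangent to $D_a$. All of them lie in $\inr a$ and give $\jla{2/a}=0$, and there are infinitely many of them, so the minimum over $\inr a$ equals $0$ and its attainment set is exactly this family.

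For $\la>2/a$ I would show that $D_a$ is the unique minimizer. Unbounded $\oo\in\inr a$ need not be considered, since the functional is not finite there; so let $\oo\in\inr a$ be bounded with $\oo\neq D_a$ and pick $b>0$ with $\oo\subseteq D_b$. Necessarily $b>a$, since $\oo\subseteq D_a$ together with $D_a\subseteq\oo$ would force $\oo=D_a$. Then $\oo\in\C$ for this $b$, and by Theorem~\ref{teoDaDb}\,{\bf\ref{Da}} the unique minimizer of $\jla{\la}$ over $\C$ is $D_a$, so $\jla{\la}(\oo)>\jla{\la}(D_a)$. Since $D_a\in\inr a$, this proves that $D_a$ is the unique solution of~(\ref{PBa}). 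Assembling the three cases yields the statement, and in particular shows that for $\la\ge 2/a$ the solutions of~(\ref{PBa}) coincide with those solutions of~(\ref{PB}) that are insensitive to the outer ball.

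The delicate point is the case $\la=2/a$, more precisely the equality analysis: the inequality itself is immediate from the surface-area representation once $h\ge a$ is observed, but identifying the equality cases with the circumscribed figures requires care about which normal directions actually carry the measure $S_\oo$, together with a distributional ($BV$) reading of ``$h''+h$'' for convex bodies that are not polygons, so as to match the geometric description used in Remark~\ref{la=2/a}.
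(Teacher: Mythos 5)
Your proposal is correct, and its overall architecture (degenerating circumscribed triangles for non-existence when $\la<2/a$; reduction of any bounded competitor in $\inr{a}$ to some class $\mathscr{C}_{a,b}$ and then to Theorem \ref{teoDaDb} for $\la>2/a$) is the same as the paper's, which argues exactly via the $b\to\infty$ limit of the triangle $T''$ of Figure \ref{triangles}~(c) and via the remark that for $\la\ge 2/a$ the solutions of (\ref{PB}) do not depend on the exterior ball. The genuine difference is your treatment of the borderline case $\la=2/a$: the paper obtains the value $0$ of the minimum and the description of the minimizers by invoking Remark \ref{la=2/a}, i.e.\ by reusing the structure theorems for Problem (\ref{PB}) (Theorems \ref{teoJimmy}, \ref{teopolyg}, \ref{teolaalablb} and Lemma \ref{Lbvuoto}) together with the identity $|\oo|=\tfrac a2 P(\oo)$ for circumscribed figures, whereas you prove directly the sharp inequality $|\oo|-\tfrac a2 P(\oo)=\tfrac12\int_0^{2\pi}(h-a)\,dS_\oo\ge 0$ for every convex $\oo\supseteq D_a$ and read off the equality cases from the support of the surface area measure. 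Your route is more self-contained and in fact reproves the Bonnesen--Fenchel inequality (\ref{BonFen}) without any of the shape-optimization machinery, at the price of the measure-theoretic care you yourself flag (the distributional reading of $h''+h$ and the identification of $\{h=a\ S_\oo\text{-a.e.}\}$ with boundaries made of arcs of $\bd D_a$ and tangent segments); the paper's route buys economy by leaning on results it has already established. Your explicit formula $\jla{\la}(T)=P(T)\bigl(\tfrac{\la a}{2}-1\bigr)$ for circumscribed triangles is also a cleaner packaging of the paper's limit computation $\lim_{b\to\infty}\jla{\la}(T'')=-\infty$, valid uniformly for all $\la<2/a$.
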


\subsubsection{Analysis of not too large convex sets}\label{secPBb}
For $b>0$ we define the class $\outr{b}$ as the class of convex sets contained in the ball $D_b$ and we consider the problem
\begin{equation}\label{PBb}
\min_{\oo\in\outr{b}} \jla{\la}(\oo),
\end{equation}
where $\jla{\la}$ is defined as in (\ref{Jla1}).

Since for every fixed $b>0$ the class $\outr{b}$ is compact  for the
Hausdorff distance, the existence of a solution to Problem
(\ref{PBb}) is guaranteed for every $\la\ge 0$. We would like to
solve Problem (\ref{PBb}) passing to the limit $a\to 0$ in Problem
(\ref{PB}), but this cannot be done directly since we cannot assume
that an optimal set $\ola$ to (\ref{PBb}) contains the ball $D_a$,
even for very small $a>0$. However we can circumvent this difficulty
by considering a ``translated'' problem.

Let $\oo\in\outr{b}$ be given. If the origin is in the exterior of
$\oo$, it means that $\oo$ lies in an open half-disc and we can
translate it (without changing the value of the functional) to
assume either that the origin is in the interior of $\oo$ or that it
is on its boundary . If the origin is in the interior of $\oo$ there
exists $\ep>0$ such that $\oo\in\mathscr{C}_{\ep,b}$ which entails
\begin{equation}\label{Cb1}
\jla{\la}(\oo)\ge\jla{\la}(\oo_\la^\ep),
\end{equation}
where $\oo_\la^\ep$ is an optimal set for the Problem (\ref{PB}) in
the class $\mathscr{C}_{\ep,b}$. We can now use the analysis done
for Problem (\ref{PB}). Hence for $\la\le 1/2b$ the set
$\oo_\la^{\ep}$ is the ball  $D_b$, while for $\frac 1{2b}<\la\le
\frac 1{b+\ep}$ the set $\oo_\la^\ep$ is strictly inscribed into
$D_b$ and it is either regular or quasi-regular. For $\frac
1b\le\la\le \frac{2b}{(b-\ep)(b+2\ep)}$ we have $\oo_\la^\ep=T'_b$
the set in Figure \ref{triangles} (b) whose circumradius is $b$ and
inradius is $\ep$, while for $\frac{2b}{(b-\ep)(b+2\ep)}\le \la \le
\frac 2{\ep}$ the set $\oo_\la^\ep$ is the triangle $T''_b$ in
Figure \ref{triangles} (c), with circumradius $b$ and inradius
$\ep$. Passing to the limit for $\ep$ which tends to zero we get
inequality (\ref{Cb1}) with $\ola$ equal to the optimal set of
Problem (\ref{PB}) for $0\le\la\le1/b$, while for $\la\ge 1/b$ we
obtain as optimal set a double diameter.

If $\oo$ contains the origin on its boundary then we consider  a
translation of the origin such that $O_\ep=O-\ep$,
$\oo_\ep=\oo-\ep$. Hence $\oo_\ep\in\mathscr{C}_{\ep,b_\ep}$ for
sufficiently small $\ep$ and $b_\ep=b+\ep$. As $|\oo_\ep|=|\oo|,
P(\oo_\ep)=P(\oo_\ep)$, inequality (\ref{Cb1}) still holds true,
with $\oo_\la^\ep$ an optimal set for the Problem (\ref{PB}) in the
class $\mathscr{C}_{\ep,b_\ep}$. The same argument as before
(passing to the limit when $\ep\to 0$) leads to the following
result.

\begin{proposition}\label{propJb}
For every $\la\ge 0$ there exists a solution $\ola$ to the problem (\ref{PBb}).
In particular $\ola$ coincides with the optimal set in Problem (\ref{PB}) for $\la < 1/b$, while $\ola$ is a double diameter for $\la\ge 1/b$.
\end{proposition}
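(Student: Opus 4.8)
The plan is to deduce Proposition~\ref{propJb} from the complete description of the minimizers of Problem~(\ref{PB}) obtained above, by letting the radius of the inner ball tend to zero. Existence is immediate: for every $b>0$ the class $\outr{b}$ is compact for the Hausdorff distance and $\oo\mapsto\jla{\la}(\oo)$ is continuous on the convex subsets of $D_b$ and bounded below (by $-2\pi b$, since $P(\oo)\le 2\pi b$), so the infimum is attained. It remains to identify an optimal set, and the key point is a reduction of Problem~(\ref{PBb}) to Problem~(\ref{PB}) in a class $\mathscr{C}_{\ep,b}$ (or $\mathscr{C}_{\ep,b+\ep}$), followed by the limit $\ep\to0$.

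First I would normalize the position of a given $\oo\in\outr{b}$ by a translation, which leaves $\jla{\la}$ unchanged. If the origin lies in the exterior of $\oo$, then $\oo$ is contained in an open half-disc of $D_b$ and a translation brings the origin into $\overline\oo$. If the origin is interior to $\oo$, there is $\ep>0$ with $D_\ep\subseteq\oo$, hence $\oo\in\mathscr{C}_{\ep,b}$ and
$$
\jla{\la}(\oo)\ \ge\ \min_{\mathscr{C}_{\ep,b}}\jla{\la}\ =\ \jla{\la}(\oo_\la^\ep),
$$
where $\oo_\la^\ep$ is an optimal set for Problem~(\ref{PB}) in $\mathscr{C}_{\ep,b}$; if instead the origin lies on $\bd\oo$, a further translation of size $\ep$ (which changes neither area nor perimeter) puts $\oo$ into $\mathscr{C}_{\ep,b+\ep}$ and the same inequality holds with $\oo_\la^\ep$ optimal in $\mathscr{C}_{\ep,b+\ep}$. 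This is precisely inequality~(\ref{Cb1}).

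It then remains to pass to the limit $\ep\to0$ using the explicit form of $\oo_\la^\ep$. If $\la< 1/b$, then by Theorem~\ref{teoDaDb} one has $\oo_\la^\ep=D_b$ for $\la\le 1/2b$, while for $1/2b<\la<1/b$, as soon as $\ep$ is small enough that $\la<1/(b+\ep)$, Proposition~\ref{labvuoto} forces $\oo_\la^\ep$ to touch neither the inner ball nor its tangent lines, so by Proposition~\ref{regularinscrit} and Corollary~\ref{unicoN} it is the regular (or quasi-regular) polygon $\ola$ inscribed into $D_b$; in particular $\oo_\la^\ep$ no longer depends on $\ep$ and equals the optimal set of Problem~(\ref{PB}). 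Since such an $\ola$ has positive inradius it lies in $\outr{b}$, and the displayed inequality yields $\jla{\la}(\oo)\ge\jla{\la}(\ola)$ for all $\oo\in\outr{b}$, so $\ola$ is optimal. If $\la\ge 1/b$, then by Theorem~\ref{teopxi0qx} the set $\oo_\la^\ep$ is, for $\ep$ small, the triangle $T'_b$ or $T''_b$ of circumradius $b$ and inradius $\ep$ (Figure~\ref{triangles}); as $\ep\to0$ both triangles collapse, in the Hausdorff sense, onto a diameter $\Sigma$ of $D_b$, with $|\oo_\la^\ep|\to 0$ and $P(\oo_\la^\ep)\to 4b$, hence $\jla{\la}(\oo_\la^\ep)\to -4b$ and therefore $\jla{\la}(\oo)\ge -4b$ on $\outr{b}$. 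Since the double diameter $\Sigma$ belongs to $\outr{b}$ and satisfies $|\Sigma|=0$, $P(\Sigma)=4b$, we get $\jla{\la}(\Sigma)=-4b$, so $\Sigma$ is optimal.

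The delicate step is the passage to the limit $\ep\to0$: one has to know that the minimizer $\oo_\la^\ep$ of Problem~(\ref{PB}) in $\mathscr{C}_{\ep,b}$ stabilizes (when $\la<1/b$) or degenerates in a controlled way (when $\la\ge1/b$), which is exactly what the fine analysis of Section~\ref{SecNumbersOfSides} provides, together with the continuity of the perimeter as the thin triangles collapse to the double diameter. The remaining ingredient, the bookkeeping of the three possible positions of the origin (interior, boundary, exterior) relative to $\oo$, is only mildly technical.
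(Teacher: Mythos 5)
Your proof is correct and follows essentially the same route as the paper: translate so the origin lies in the interior of (or on) $\oo$, reduce to Problem (\ref{PB}) in $\mathscr{C}_{\ep,b}$ (or $\mathscr{C}_{\ep,b+\ep}$) via inequality (\ref{Cb1}), and then let $\ep\to 0$ using the classification of the minimizers, with the inscribed polygons stabilizing for $\la<1/b$ and the triangles $T'_b$, $T''_b$ collapsing to the double diameter for $\la\ge 1/b$. The only additions are the explicit limit computation $\jla{\la}(\oo_\la^\ep)\to-4b$ and the remark that the minimizer no longer depends on $\ep$ when $\la<1/b$, both of which the paper leaves implicit.
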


\subsection{Inequalities for convex sets}
In the study of the theory of convex sets, geometric inequalities play a crucial rule as they allow to connect important geometric quantities (as the area and the perimeter) and to have an estimate of them.
We refer to \cite{SA} for a summary of the most famous.

\subsubsection{Area, perimeter and inradius}
A well known inequality involving the area $|\oo|$, the perimeter $P(\oo)$ and the inradius $r(\oo)$ of a convex set $\oo$ is due to Bonnesen and Fenchel (see \cite{BF}).
They proved that for every planar convex set $\oo$,
\begin{equation}\label{BonFen}
P(\oo)\le 2\frac{|\oo|}{r(\oo)}.
\end{equation}

Notice that Theorem \ref{teoDaDb} offers a new proof of this result. 
Indeed: let $\oo$ be a planar convex set, up to translation of the origin we can assume $D_r\subset\oo$, where $r=r(\oo)$; moreover there exists $R>r$ such that $\oo\subset D_R$ and hence $\oo\in\mathscr{C}_{r,R}$.
Then Remark \ref{la=2/a} entails
$$
\frac 2r |\oo|-P(\oo)\ge \frac 2r |D_r|-P(D_r)=0,
$$
which corresponds to Bonnesen-Fenchel inequality (\ref{BonFen}) and in particular equality holds in (\ref{BonFen}) for every polygon circumscribed to $D_a$ as well as for every convex set $\oo$ whose boundary is composed by arcs of $D_r$ and tangent sides to it.

\subsubsection{Area, perimeter and circumradius}\label{AreaPerimeterCircumradius}
Another interesting inequality regards the area, the perimeter and the circumradius $R(\oo)$.
In \cite{F} it is proved that for every planar convex set $\oo$ it holds
\begin{equation}\label{circumradius}
|\oo|\ge R(\oo)(P(\oo)-4R(\oo)),
\end{equation}
with equality for linear segments.

Using Theorem \ref{teopxi0qx} for $\la=1/b$, we can recover this result.
Indeed, let $\oo$ be a planar convex set and let $R=R(\oo)$ be its circumradius; up to translation of the origin we can assume $\oo\subseteq D_R$.
If $\oo$ contains the origin in its interior, then there exists $\ep>0$ such that $D_\ep\subset\oo$ and hence $\oo\in\mathscr{C}_{\ep,R}$, which implies
\begin{equation}\label{circumradius2}
\frac 1{R}|\oo|-P(\oo)\ge \frac 1R |T_\ep'|-P(T'_\ep)=4{\sqrt{R^2-\ep^2}}\Big( \frac {\ep^3}{R^3}-\frac {\ep}R -1 \Big),
\end{equation}
where $T'_\ep$ is the triangle in Figure \ref{triangles} (b), whose inradius is $\ep$.
Passing to the limit for $\ep$ which tends to zero, we obtain
$$
\frac 1{R}|\oo|-P(\oo)\ge -4R,
$$
with equality for segments, which are in fact obtained as limits of triangles $T'_\ep$.
If the origin is on the boundary of $\oo$ then using the same argument than in Section \ref{secPBb} we have $\oo_\ep=\oo-\ep\in\mathscr{C}_{\ep,R+\ep}$.
Applying Theorem \ref{teopxi0qx} for $\la=1/(R+\ep)$, we get inequality (\ref{circumradius2}) for $R_{\ep}=R+\ep$,
 \begin{equation*}
\frac 1{R_\ep}|\oo|-P(\oo)\ge 4{\sqrt{R_\ep^2-\ep^2}}\Big( \frac {\ep^3}{R_\ep^3}-\frac {\ep}R_\ep -1 \Big),
\end{equation*}
and passing to the limit for $\ep$ which tends to zero, we get (\ref{circumradius}), with equality for diameters of the ball $D_R$.

\bigskip
Actually, we can get another similar inequality which improves the
previous one for "large" perimeter. Indeed if we choose now
$\la=1/2b$ in Proposition \ref{propJb}, the optimal domain is the
ball $D_b$. Thus, for any domain included in the ball $D_b$, the
following inequality holds
$$\frac{1}{2b}|\oo|-P(\oo)\geq \frac{\pi b^2}{2b}-2\pi b=-\frac{3\pi
b}{2}\,.$$ In particular, replacing $b$ by the circumradius yields
the following proposition:

\begin{proposition}\label{propcircum}
For a convex set $\oo$ the following
inequality holds
\begin{equation}\label{ineqcirc}
|\oo|\ge R(\oo)(2 P(\oo)-3\pi R(\oo))
\end{equation}
with equality for a ball. Moreover inequality (\ref{ineqcirc})
improves inequality (\ref{circumradius}) when $P(\oo)\ge (3\pi -4)
R(\oo)$.
\end{proposition}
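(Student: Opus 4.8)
The plan is to specialize Proposition~\ref{propJb} to the value $\la=1/(2b)$ and then to optimize over the auxiliary ball by replacing $b$ with the circumradius. First I would record the base inequality: by Proposition~\ref{propJb}, for every $b>0$ and every $\la<1/b$ the minimizer of $\jla{\la}$ over the class $\outr{b}$ of convex sets contained in $D_b$ coincides with the minimizer of Problem~(\ref{PB}); taking $\la=1/(2b)$, which in particular satisfies $\la\le 1/(2b)$, Theorem~\ref{teoDaDb}~{\bf\ref{Db}} identifies this minimizer with the ball $D_b$ itself. Hence for every convex $\oo\subseteq D_b$ one has
\[
\frac1{2b}|\oo|-P(\oo)\ \ge\ \frac1{2b}|D_b|-P(D_b)=\frac{\pi b^2}{2b}-2\pi b=-\frac{3\pi b}{2}.
\]

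Next I would remove the dependence on the auxiliary ball. Given an arbitrary planar convex set $\oo$, set $R=R(\oo)$ and translate $\oo$ so that its (unique) circumscribed ball is $D_R$; this changes neither $|\oo|$ nor $P(\oo)$, and now $\oo\in\outr{R}$. Applying the displayed inequality with $b=R$ and rearranging gives
\[
|\oo|\ \ge\ 2R\,P(\oo)-3\pi R^{2}=R(\oo)\bigl(2P(\oo)-3\pi R(\oo)\bigr),
\]
which is exactly (\ref{ineqcirc}). Equality for a ball is immediate: if $\oo=D_R$ then $P(\oo)=2\pi R$ and $R(2P(\oo)-3\pi R)=R(4\pi R-3\pi R)=\pi R^{2}=|\oo|$. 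For the comparison with (\ref{circumradius}) I would simply subtract the two right-hand sides,
\[
R(\oo)\bigl(2P(\oo)-3\pi R(\oo)\bigr)-R(\oo)\bigl(P(\oo)-4R(\oo)\bigr)=R(\oo)\bigl(P(\oo)-(3\pi-4)R(\oo)\bigr),
\]
which is nonnegative precisely when $P(\oo)\ge(3\pi-4)R(\oo)$; in that regime the new lower bound for $|\oo|$ is at least as large as the one furnished by (\ref{circumradius}), so (\ref{ineqcirc}) indeed improves (\ref{circumradius}).

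There is essentially no analytic difficulty here: the argument reduces to the two elementary computations above together with the citation of Proposition~\ref{propJb}. The only point requiring care is the placement of the origin, namely that $\oo$ must first be translated so that its circumball is the ball $D_R$ centered at the origin before Proposition~\ref{propJb} can be invoked --- exactly the bookkeeping already performed in Section~\ref{secPBb} (distinguishing whether the origin falls in the interior of $\oo$ or on its boundary). I expect this to be the only subtlety; everything else is routine.
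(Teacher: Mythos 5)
Your proposal is correct and follows essentially the same route as the paper: the authors likewise take $\la=1/(2b)$ in Proposition~\ref{propJb} to conclude the minimizer over $\outr{b}$ is $D_b$, obtain $\frac{1}{2b}|\oo|-P(\oo)\ge -\frac{3\pi b}{2}$, and then replace $b$ by the circumradius. Your added checks (equality for the ball, the explicit subtraction showing when (\ref{ineqcirc}) improves (\ref{circumradius}), and the translation bookkeeping) are all consistent with what the paper leaves implicit.
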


\subsection{Maximum for the perimeter}
Let us consider the following problem
\begin{equation}\label{PB2}
\max_{\substack{\oo\in\C,\\|\oo|\le c}} P(\oo),
\end{equation}
where $c>0$ is a given constant. 
If $\pi a^2\le c\le \pi  b^2$ then a solution exists by the  compactness of the class $\C\cap\{|\oo|\le c\}$ and the continuity of $P(\cdot)$ (for the Hausdorff distance).
In particular using the formulation of the perimeter in terms of the so called \emph{gauge function}, Theorem {2.1} of \cite{LN} guarantees that all the possible solutions are locally polygons in the interior of the annulus $D_b\setminus \overline{D_a}$.

Notice that each solution $\oo_c$ to (\ref{PB2}) in fact saturates
the constraint on the volume, that is $|\oo_c|=c$. Indeed, for every
set $\oo\in\C$ with volume strictly smaller than $c$, there exists
$\oo'\in\C$, with $|\oo'|=c$ and $\oo'\supset \oo$; as $\oo,\oo'$
are planar convex sets, it holds $P(\oo')>P(\oo)$.

Let $\oo_c$ be a solution to (\ref{PB2}) for some fixed $c$; hence $\oo_c$ is a critical point for the functional $\jla{\la}$ with $\la$ corresponding to a Lagrange multiplier associated to the area constraint. 
However $\oo_c$ is not necessarily a minimum for it. 
In particular, as shown in the graph below (see Figure \ref{volume-la}), there are many values of $c\in(\pi a^2,\pi b^2)$ for which there is no solution to (\ref{PB}) of volume $c$, and hence an optimal set to (\ref{PB2}) for those values of $c$ cannot be a solution to (\ref{PB}). 
\begin{center}
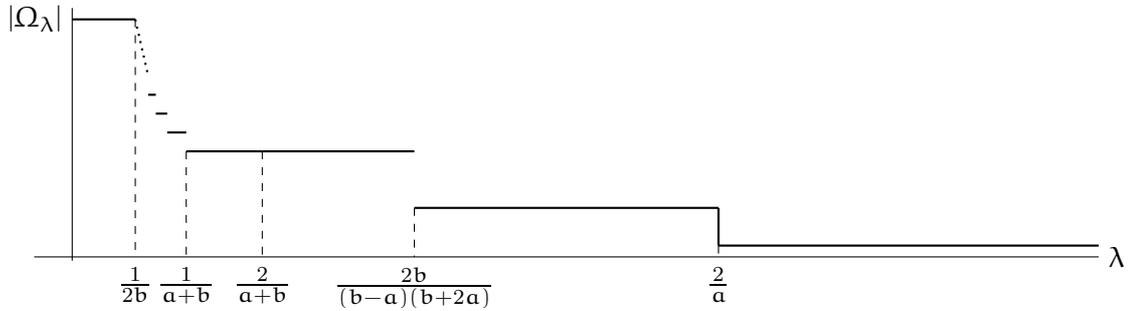
\begin{figure}[h]
\begin{tikzpicture}[x=5cm,y=0.5cm]
\draw (0,1.6)--(0,8.3);\draw (0,8)node[left]{$|\ola|$};
\draw (-0.1,1.7)--(2.7,1.7) node[right]{$\la$};
\draw[thick] (0,8)--(0.166,8);\draw[dashed] (0.166,8)--(0.166,1.7) node[below]{$\frac 1{2b}$};
\draw[thick, dotted] (0.166,8)--(0.2,6.5);
\draw[thick] (0.2, 6)--(0.22,6);\draw[thick] (0.22, 5.5)--(0.25,5.5);\draw[thick] (0.25,5)--(0.3,5);
\draw[thick] (0.3,4.5)--(0.5,4.5);\draw[dashed] (0.3,4.5)--(0.3,1.7) node[below]{$\frac 1{a+b}$};
\draw[thick] (0.5, 4.5)--(0.9,4.5);\draw[dashed] (0.5,4.5)--(0.5,1.7)
node[below]{$\frac 2{a+b}$};\draw[dashed] (0.9,3)--(0.9,1.7) node[below]{$ \frac{2b}{(b-a)(b+2a)}$};
\draw[thick] (0.9, 3)--(1.7,3);\draw[thick] (1.7,3)--(1.7,2);\draw[dashed] (1.7,2)--(1.7,1.7) node[below]{$\frac 2a$};
\draw[thick] (1.7, 2)--(2.7,2);
\end{tikzpicture}
\caption{Graph of the {possible values} of the volume of solutions to (\ref{PB}), as $\la$ varies.}\label{volume-la}
\end{figure}
\end{center}
The main difference between the two problems is that in Problem
(\ref{PB2})  solutions are not necessarily polygons and hence they
could contain parts of arcs of $D_b$ and $D_a$, as explained below.
Notice that, in fact, the proof of Theorem \ref{teopolyg} does not
work for Problem (\ref{PB2}) as the considered perturbations do not
preserve the volume.

As an example, consider the case of a fixed volume closed to that of
the ball $D_b$: $c=\pi b^2-\ep$, for some positive small $\ep$. The
class of sets belonging to $\C$ with volume equal to $c$ only
contains sets closed to the ball $D_b$ and hence each possible side
is not tangent to the interior ball $D_a$. This allows us to assume
that each side of the  boundary is a chord of $D_b$ since otherwise
a technique of parallel chord movements would increase the
perimeter. Hence if a polygon is a critical point for Problem
(\ref{PB2}), the first order conditions (\ref{1ordine}) hold and
they imply that the polygon has at most two different values for its
central angles: $x,y$ with $x>y$. In particular, following Remark
\ref{rmkoptimality}, we can check that the second order optimality
conditions guarantee that there are at most two copies of the angle
$y$ (we have here two equality constraints, thus the critical cone
is of codimension 2). Hence a possible critical polygon for
(\ref{PB2}) is determined by its central angles as $q$ copies of an
angle $x$ with either zero, one or two copies of an angle $y<x$; the
value of the central angles are established using the volume
constraint.

However direct computations show that all the possible critical
polygons  have a perimeter less than the set $\oo_c$ whose boundary
is composed by an arc of the circle $D_b$ and a chord of $D_b$ and
hence for values of $c$ closed to $\pi b^2$, solutions to Problem
(\ref{PB2}) are not polygons.


\section{Appendix}
A list of values for the constants $\hat\beta_N$ of Proposition \ref{regularinscrit}.

\begin{center}
\begin{table}[h]
\begin{tabular}{|r|c|}\hline
N & $\hat\beta_N$\\\hline
3 & 0.32862\\\hline
4 & 0.29260\\\hline
5 & 0.27706\\\hline
6 & 0.26881\\\hline
7 & 0.26388\\\hline
8 & 0.26068\\\hline
9 & 0.25848\\\hline
10 & 0.25690\\\hline
11 & 0.25572\\\hline
12 & 0.25483\\\hline
\end{tabular}
\hspace{0.5cm}
\begin{tabular}{|r|c|}\hline
N & $\hat\beta_N$\\\hline
13 & 0.25413\\\hline
14 & 0.25357\\\hline
15 & 0.25312\\\hline
16 & 0.25275\\\hline
17 & 0.25244\\\hline
18 & 0.25218\\\hline
19 &  0.25196\\\hline
20 & 0.25177\\\hline
21 & 0.25161\\\hline
22 & 0.25147\\\hline
\end{tabular}
\hspace{0.5cm}
\begin{tabular}{|r|c|}\hline
N & $\hat\beta_N$\\\hline
23 & 0.25135\\\hline
24 &  0.25124\\\hline
25 & 0.25114\\\hline
26 & 0.25106\\\hline
27 &  0.25098\\\hline
28 & 0.25091\\\hline
29 & 0.25085\\\hline
30 & 0.25080\\\hline
31 & 0.25075\\\hline
32 & 0.25070\\\hline
\end{tabular}
\hspace{0.5cm}
\begin{tabular}{|r|c|}\hline
N & $\hat\beta_N$\\\hline
33 & 0.25066\\\hline
34 & 0.25062\\\hline
35 & 0.25059\\\hline
36 & 0.25056\\\hline
37 & 0.25053\\\hline
38 & 0.25050\\\hline
39 &  0.25048\\\hline
40 & 0.25045\\\hline
41 & 0.25043\\\hline
42 & 0.25041\\\hline
\end{tabular}
\hspace{0.5cm}
\begin{tabular}{|r|c|}\hline
N & $\hat\beta_N$\\\hline
43 &  0.25039\\\hline
44 & 0.25037\\\hline
45 & 0.25036\\\hline
46 &  0.25034\\\hline
47 & 0.25033\\\hline
48 & 0.25032\\\hline
49 & 0.25030\\\hline
50 & 0.25029\\\hline
51 & 0.25028\\\hline
52 & 0.25027\\\hline
\end{tabular}
\end{table}\label{tableBn}
\end{center}
%

\section*{Acknowledgements}
{This work was done while the first author was supported by the ANR CNRS project GAOS (Geometric Analysis of Optimal Shapes) at the Institut Elie Cartan Nancy.
She would like to thank all the group for their warm welcome.}


\begin{thebibliography}{99}
\bibitem{B} K. Ball, ``Volume ratios and a reverse isoperimetric inequality'', J. London Math. Soc. (1991) s2-44 (2), 351-359. 

\bibitem{BF} T. Bonessen, W. Fenchel ``Th\'eorie der konvexen K\"orper'', Springer Berlin 1929.

\bibitem{BG} G. Buttazzo, P. Guasoni, ``Shape optimization problems over classes of convex domains'', J. Convex Anal. 4 , n.2, 343-351, (1997).

\bibitem{C} M. Crouzeix, ``Une famille d'in\'egalit\'es pour les ensembles
convexes du plan'', Annales Math\'ematiques Blaise Pascal Vol 12, n.2, 223-230, (2005).

\bibitem{F} J. Favard, ``Probl\`emes d'extremums relatifs aux courbes convexes'' Ann. \'Ecole Norm. Sup., 46 (1929), 345-369.

\bibitem{HP} A. Henrot, M. Pierre, Variation et Optimisation de
forme, une analyse g\'eom\'etrique, Math\'ematiques et Applications,
vol. {\bf 48}, Springer 2005.

\bibitem{HPR} A. Henrot, M. Pierre, M. Rihani, ``Finite Dimensional Reduction for the
Positivity of some Second Shape Derivatives'', Methods and Applications of Analysis, 10, n.3 (2003), 457-476.

\bibitem{Kr} A. Kripfganz, ``About an Inequality of Kubota for plane convex figures'', Contributions to Algebra and Geometry 40 (1999) 1, 53-65.

\bibitem{LN} J. Lamboley, A. Novruzi, ``Polygon as optimal shapes with convexity constraint'', SIAM J. Control Optim. Volume 48, Issue 5, 3003-3025 (2009).

\bibitem{SA} P.R. Scott, P.W. Awyong, ``Inequalities for convex sets'', J. Inequal. Pure Appl. Math., 1, n.1 (2000), Article 6.

\end{thebibliography}
\end{document}